\documentclass{amsart}
\usepackage{latexsym}
\usepackage{amsmath}
\usepackage{mathtools}
\usepackage{mathrsfs}
\usepackage{amssymb}
\usepackage[all,cmtip,2cell]{xy}
\UseTwocells

\usepackage{comment}

\usepackage{autobreak}
\usepackage{enumitem}
\usepackage[hypertexnames=false]{hyperref} 
\usepackage[capitalize]{cleveref}
\crefname{equation}{}{}
\crefname{enumi}{}{}

\usepackage[usenames]{xcolor}
\definecolor{refkey}{rgb}{0.6, 0.7, 0.4}
\definecolor{labelkey}{rgb}{0, 0.7, 0.5}
\newcommand{\enabletodonotes}{}

\usepackage[\ifdefined\enabletodonotes\else disable\fi]{todonotes}

\usepackage{tikz}
\usetikzlibrary{positioning}
\usetikzlibrary{arrows.meta}

\numberwithin{equation}{section}
\newtheorem{thm}{Theorem}[section]
\newtheorem{prop}[thm]{Proposition}

\newtheorem{lem}[thm]{Lemma}
\newtheorem{claim}[thm]{Claim}
\theoremstyle{definition}
\newtheorem{defn}[thm]{Definition}

\theoremstyle{remark}
\newtheorem{rem}[thm]{Remark}
\newtheorem{ex}[thm]{Example}

%
%
%
%

%

%

%

%

%

%
\newcommand{\R}{{\mathbb R}}
\newcommand{\F}{{\mathcal F}}
\newcommand{\D}{{\mathcal D}}
\newcommand{\e}{\varepsilon}

\newcommand{\im}{\operatorname{Im}}

%

%

\newcommand{\mapright}[1]{%
 \smash{\mathop{%
  \hbox to 1cm{\rightarrowfill}}\limits_{#1}}}
\newcommand{\maprightd}[2]{%
 \smash{\mathop{%
  \hbox to 0.5cm{\rightarrowfill}}\limits^{#1}\limits_{#2}}}
\newcommand{\mapleft}[1]{%
 \smash{\mathop{%
  \hbox to 1cm{\leftarrowfill}}\limits_{#1}}}
\newcommand{\mapleftu}[1]{%
 \smash{\mathop{%
  \hbox to 0.8cm{\leftarrowfill}}\limits^{#1}}}
\newcommand{\maprightu}[1]{%
 \smash{\mathop{%
  \hbox to 1cm{\rightarrowfill}}\limits^{#1}}}
\newcommand{\maprightud}[2]{%
 \smash{\mathop{%
  \hbox to 1cm{\rightarrowfill}}\limits^{#1}_{#2}}}
\newcommand{\mapleftud}[2]{%
 \smash{\mathop{%
  \hbox to 1cm{\leftarrowfill}}\limits^{#1}_{#2}}}

\newcommand{\ad}{\mathrm{ad}}
\newcommand{\colim}{\mathop{\mathrm{colim}}}
\newcommand{\ev}{\mathrm{ev}}
\newcommand{\id}{\mathrm{id}}
\newcommand{\pr}{\mathrm{pr}}
\newcommand{\vol}{\mathrm{vol}}
\newcommand{\Iso}{\mathrm{Iso}}

\author[K. Kuribayashi]{Katsuhiko Kuribayashi}
\address{%
  Department of Mathematics,
  Faculty of Science,
  Shinshu University,
  Matsumoto, Nagano 390-8621, Japan
}
\email{kurimath@shinshu-u.ac.jp}

\author[K. Sakai]{Keiichi Sakai}
\address{%
  Department of Mathematics,
  Faculty of Science,
  Shinshu University,
  Matsumoto, Nagano 390-8621, Japan
}
\email{sakaik@shinshu-u.ac.jp}

\author[Y. Shiobara]{Yusuke Shiobara}
\address{%
  Department of Science and Technology,
  Graduate School of Medicine, Science and Technology,
  Shinshu University,
  Matsumoto, Nagano 390-8621, Japan
}
\email{24hs603d@shinshu-u.ac.jp}

\title[Towards Riemannian diffeology]{Towards Riemannian diffeology 
}


\subjclass[2020]{Primary 58A40; Secondary 18F15, 58B20}
\keywords{diffeology, metric, tangent functor, mapping space, adjunction space.}
\thanks{The first author was partially supported by JSPS KAKENHI Grant Number JP21H00982. The second author was partially supported by JSPS KAKENHI Grant Numbers JP20K03608, JP21H00982 and JP25K06972. The third author was supported by the Sasakawa Scientific Research Grant 2025-2017 from The Japan Science Society. }

\begin{document}
\begin{abstract}
We introduce a framework for Riemannian diffeology. 
To this end, we use the tangent functor in the sense of Blohmann and one of the options of a metric on a diffeological space in the sense of Iglesias-Zemmour.
As a consequence, the category consisting of weak Riemannian diffeological spaces and isometries is established. 
With a technical condition for a definite weak Riemannian metric, we show that the pseudodistance induced by the metric is indeed a distance.
As examples of weak Riemannian diffeological spaces, an adjunction space of manifolds, a space of smooth maps and the mixed one are considered.  
\end{abstract}

\maketitle

\section{Introduction}\label{sect:Introduction} 
Diffeology \cite{So, PIZ12} provides a natural generalization of differential topology and geometry. 
The de Rham theory \cite{Haraguchi, H-V-C, PIZ85,  PIZ_deRham, K2020, M23, G-PIZ}, sheaf theory \cite{N-A, KWW}, infinite dimensional geometry for partial differential equations \cite{GMW} and (abstract) homotopy theory \cite{C-W, Kihara1, Kihara, K2024, S-Y-H} have also been developed in the diffeological setting. 
Moreover, categorical comparisons of diffeology with other smooth and topological structures are made in \cite{B-H, C-S-W, S}.
However, it is hard to say that the development of Riemannian notions of diffeological spaces is sufficient.

In \cite{PIZ23}, Iglesias-Zemmour has introduced a notion of Riemannian metrics in diffeology. 
The definition of the definiteness of the metric on a diffeological space is described with differential $1$-forms on the given space; see \cite[Page 3]{PIZ23}. 
Making a quote from \cite[Page 227]{PIZ25}, 
``It is not clear what definition is the best, for many examples built with manifolds and spaces of smooth maps they do coincide. 
But they may differ in general and, depending on the problem, one must choose one or the other.''

This article introduces a weak Riemannian metric of a diffeological space and its definiteness using the tangent functor due to Blohmann \cite{B23, B24}. 
In particular, the colimit construction of the tangent functor works well in considering the metrics. 
As a consequence, we may deal with weak Riemannian metrics for a diffeological  adjunction space and the space of smooth maps in our framework simultaneously. 
A comparison between a weak Riemannian metric and the metric due to Iglesias-Zemmour is made in Propositions \ref{prop:one-to-one} and \ref{prop:IZ-Ours}. 
It is worthwhile mentioning that Goldammer and Welker introduce another definition of a Riemannian diffeological space in \cite{G-W} by using the tangent space in the sense of Vincent \cite{V}. 
While it is important to consider the relationship between two frameworks of Riemannian diffeology, we do not pursue the issue in this article.

Here is a summary of our main results. 
Let $X$ be a \textit{weak Riemannian diffeological space}, namely a diffeological space endowed with a weak Riemannian metric. 
The pseudodistance $d$ defined by a weak Riemannian metric on $X$ gives the topology $\mathcal{O}_d$ of $X$. 
Then, Theorem \ref{thm:D-top} allows us to conclude that the $D$-topology of $X$ is finer than $\mathcal{O}_d$. 
Theorem \ref{thm:condition_for_g_to_be_definite} asserts that the pseudodistance defined by a weak Riemannian metric on $X$ is indeed a distance if the metric is definite and the diffeology of $X$ is generated by a family of plots which \emph{separates points}; see Definition \ref{defn:points} for the separation condition. 
The necessity of the separateness for a diffeology is clarified in Example \ref{ex:Y}.

Theorem \ref{thm:adjunction_g} yields that an adjunction diffeological space obtained by attaching two definite weak Riemannian diffeological spaces admits again a definite weak Riemannian metric.
We also see that the space $C^\infty(M, N)$ of smooth maps admits a weak Riemannian metric, here $M$ is a closed manifold and $N$ is a weak Riemannian diffeological space; see Section \ref{ss:metric_on_MappingSapces}. Moreover, it turns out that the pseudodistance on $C^\infty(M, N)$ is a distance without assuming the separation condition; see Proposition \ref{prop:distance_on_Func}. 
Theorem \ref{thm:D'} discusses the definiteness of the metric of $C^\infty(M, N)$ endowed with appropriate subdiffeology (which is coarser than the functional diffeology in general).

These constructions are mixed. 
In fact, we obtain a fascinating example of a definite weak Riemannian diffeological space.

\begin{ex}[see Example \ref{ex:mapping_spaces_pushout} and Proposition \ref{prop:an_extension} for a more general setting]\label{ex:theFirstExample}
Let $M$ be a closed orientable manifold and $(N, g_N)$ a Riemannian manifold. The diffeological adjunction space 
\[
 C^\infty(M, N)\coprod_NC^\infty(M, N)
\] 
obtained by the section $N \to C^\infty(M, N)$ of the evaluation map admits a definite weak Riemannian metric $\widetilde{g}$ for which 
\[
 \iota^*(\widetilde{g}) = (\int_M \text{vol}_M) \times g_N,
\]
where the left-hand side is the pullback of the metric  $\widetilde{g}$ by the canonical injection $\iota \colon N \to  C^\infty(M, N)\coprod_NC^\infty(M, N)$ and $\text{vol}_M$ denotes the volume form of $M$; see Definition \ref{defn:Pullback} for the pullback of a metric. 
\end{ex}

By applying Proposition \ref{prop:An_induction}, which is a result on the pullback of a metric along an induction, and a general result on a weak Riemannian metric on a mapping space, we investigate one of the special examples of mapping spaces, \emph{free loop spaces}.
Proposition~\ref{prop:concatenation_preserves_metric} shows that the \emph{concatenation map}, that is important in the study of \emph{string topology} \cite{C-S}, preserves the metrics on the loop spaces. 
This result suggests that our weak Riemannian metric might be well-suited to string topology; see Section \ref{ss:free_loop_space}. 

The rest of this article is organized as follows. 
Section \ref{sect:Preliminaries} recalls the definition of a diffeological space and gives its examples. 
In Section \ref{sect:assertions}, we introduce a weak Riemannian metric, the (pseudo-)distance on a diffeological space and the category of weak Riemannian diffeological spaces containing (possibly infinite dimensional) manifolds with metrics.
As mentioned above, the metric is related to that in the sense of Iglesias-Zemmour.

Sections \ref{sect:Adjunction_spaces} and \ref{section:MappingSpaces} address metrics on a diffeological adjunction space and a diffeological mapping space, respectively. 
Section \ref{sect:warped_products} is devoted to considering the warped product in diffeology.  
In Section \ref{sect:OpenQ}, we propose problems on a comparison of diffeologies of an infinite-dimensional manifold, the subdiffeology of the functional diffeology of a diffeological mapping space discussed in Section \ref{section:MappingSpaces} and a relationship between Riemannian orbifolds and weak Riemannian diffeological spaces. 

\subsection{Future work and perspective} 
The category of diffeological spaces contains manifolds and other spaces with smooth structures; see Example \ref{ex:manifolds_subsets_products} and Section \ref{subsection:IDMfd} below.
Therefore, diffeology together with the notion of Riemannian metrics (Riemannian diffeology) produces many issues on smooth spaces. 
Here are some of them. 
\begin{enumerate}[wide,label=\roman*)]
\item
	With the framework of Riemannian metrics, we may consider geodesic calculus in diffeology as seen in \cite{PIZ25}.
\item
	We expect a new development of convergence theory of smooth spaces with concepts such as connections, curvatures and the Gromov-Hausdorff distance if the notion of metrics of diffeological spaces combines with \textit{elastic diffeology}
\footnote{See \cite{B23, C-C14, C-C15} for Lie brackets of vector fields which are defined in a category with a tangent structure in the sense of Rosick\'y \cite{R}. We refer the reader to  \cite{C-C17} for connections in tangent categories.}.
\item
	Topological data analysis (TDA) begins usually by putting a data set into an appropriate metric space, in particular an Euclidean space, and investigate shapes of the data. Thus, future work  includes moreover studying TDA by using a Riemannian diffeological space as the stage for a data set. 
\end{enumerate}

\section{Preliminaries}\label{sect:Preliminaries}
In this section, we recall examples of diffeological spaces and important fundamental constructions in diffeology which are used throughout this article.
We begin with the definition of a diffeological space.
A comprehensive reference for diffeology is the book by Iglesias-Zemmour \cite{PIZ12}.  
\begin{defn}
We call an open subset $U$ of a Euclidean space (of arbitrary, but finite dimension) a \textit{domain}.
A map from a domain to a set $X$ is called a \textit{parametrization} of $X$.
The domain of a parametrization $P$ of $X$ is denoted by $U_P$ or $\mathrm{dom}(P)$.

Let $X$ be a set. 
A set $\D$ of parametrizations of $X$ is a \textit{diffeology} of $X$ if the following three conditions hold: 
\begin{enumerate}[wide,leftmargin=0pt]
\item
	(Covering) Every constant map $U \to X$ for all domains $U$ is in $\D$;
\item
	(Compatibility) If $U \to X$ is in $\D$, then for any smooth map $V \to U$ from another domain $V$, the composite $V \to U \to X$ is also in $\D$; 
\item
	(Locality) If $U = \bigcup_i U_i$ is an open cover of a domain $U$ and $U \to X$ is a map such that each restriction $U_i \to X$ is in $\D$, then $U \to X$ is in $\D$. 
\end{enumerate}
An element of a diffeology $\D$ of $X$ is called a \textit{plot} of $X$.
A \textit{diffeological space} $(X, \D)$ consists of a set $X$ and a diffeology $\D$ of $X$.

For diffeological spaces  $(X, \D^X)$ and $(Y, \D^Y)$, a map $f\colon X \to Y$ is \textit{smooth} if the composite $f\circ P$ is in $\D^Y$ for each plot $P \in \D^X$.  
Then, we have the category $\mathsf{Diff}$ consisting of all diffeological spaces and smooth maps.
The isomorphisms in $\mathsf{Diff}$ are referred to as \emph{diffeomorphisms}.
Denote as $X\cong Y$ if $X,Y\in\mathsf{Diff}$ are diffeomorphic.
\end{defn}

Every set admits (possibly trivial) diffeologies;
the \emph{discrete (resp.\ indiscrete) diffeology} consisting of locally constant (resp.\ all the) parametrizations.
A typical example of a diffeological space is a manifold, as in the following Example~\ref{ex:manifolds_subsets_products} (\ref{item:standard_diffeology}).
We also have many other examples each of which is not a manifold in general.

\begin{ex}\label{ex:manifolds_subsets_products}
\begin{enumerate}[wide,leftmargin=0pt]
\item\label{item:standard_diffeology} 
	Let $M$ be a manifold of finite dimension.
	Then, the underlying set $M$ and the \textit{standard diffeology} $\D^M_{\text{std}}$ give rise to a diffeological space $(M, \D^M_{\text{std}})$, where $\D^M_{\text{std}}$ is defined to be the set of all smooth maps $U \to M$ from domains to $M$ in the usual sense.
	We have a functor $m \colon \mathsf{Mfd} \to \mathsf{Diff}$ from the category of manifolds defined by $m(M) = (M, \D^M_{\text{std}})$.
	This functor is fully faithful; see, for example, \cite[2.1 Example]{B-H}. 
\item\label{item:generating_family}
	Let $\mathcal{G}$ be a set of parametrizations of $X$.
	Then define $\langle\mathcal{G}\rangle$ to be the set of all parametrizations $P\colon U_P\to X$ satisfying the following condition:
	\begin{itemize}
	\item[]
		For any $r\in U_P$, there exists an open neighborhood $V_r$ of $r$ in $U_P$ such that $P|_{V_r}$ is a constant plot, or there exist $Q\in\mathcal{G}$ and a smooth map $f\colon V_r\to U_Q$ such that $P|_{V_r}=Q\circ f$.
	\end{itemize}
	Then $\langle\mathcal{G}\rangle$ is a diffeology of $X$, called the \emph{diffeology generated by $\mathcal{G}$}.

	Let $(X,\mathcal{D})$ be a diffeological space.
	A subset $\mathcal{G}\subset\mathcal{D}$ is a \emph{generating family} of $\mathcal{D}$ if $\langle\mathcal{G}\rangle=\mathcal{D}$. 
	Examples include generating families $\mathcal{G}_{\text{atlas}}$ and $\mathcal{G}_{\text{imm}}$ of the standard diffeology $\mathcal{D}^M_{\text{std}}$ of a manifold $M$ with an atlas $\{(V_\lambda, \varphi_\lambda)\}_{\lambda \in \Lambda}$ given respectively by $\mathcal{G}_{\text{atlas}}=\{i_\lambda\circ \varphi_{\lambda}^{-1} \colon \varphi_\lambda(V_{\lambda})\to M \}_{\lambda \in \Lambda}$ (where $i_\lambda \colon V_{\lambda}\to M$ is the inclusion), and the set of immersions $\mathcal{G}_{\text{imm}}=\{\text{immersions }f\colon U\looparrowright M\}$.
\item\label{item:sub_diff}
	For a diffeological space $(X, \D^X)$ and a subset $A$ of $X$, we define $\D^A_{\text{sub}}$ by 
	$\D^A_{\text{sub}}\coloneqq \{ P \colon U_P \to A \mid U_P \text{ is a domain and } i\circ P \in \D^X\}$,
	where $i \colon A \to X$ is the inclusion.
	Then, the set $\D^A_{\text{sub}}$ is a diffeology of $A$, which is called the \textit{sub-diffeology}.
	We call $(A, \D^A_{\text{sub}})$ a \textit{diffeological subspace} of $X$. 
\item\label{item:product_diff}
	Let $\{(X_i, \D_i) \}_{i \in I}$ be a family of diffeological spaces.
	Then, the product $\Pi_{i\in I}X_i$ has a diffeology $\D$, called the \textit{product diffeology}, defined to be the set of all parameterizations $P\colon U_P \to \Pi_{i\in I}X_i$ such that $\pi_i\circ p$ are plots of $X_i$ for each $i \in I$, where $\pi_i \colon \Pi_{i\in I}X_i \to X_i$ denotes the canonical projection.
\item\label{item:induction}
	More general, the \textit{initial diffeology} $\D^Y$ for maps $h_i \colon Y \to (X_i, \D_i)$ for $i \in I$ is defined by $\D^Y \coloneqq\{P\colon U_P \to Y \mid h_i\circ P \in \D_i \text{ for } i \in I\}$.
	This is the largest diffeology on $Y$ making all $h_i$ smooth.
	We call an injective map $j \colon X\to Y$ between diffeological spaces an \textit{induction} if $\mathcal{D}^X$ coincides with the initial diffeology for $j$.
	It is immediate that if $A$ is a diffeological subspace of $X$, then the inclusion $A \to X$ is an induction. 
\item\label{item:functional_diffeology}
	Let $(X, \D^X)$ and $(Y, \D^Y)$ be diffeological spaces.
	Let $C^\infty(X, Y)$ denote the set of all smooth maps from $X$ to $Y$.
	The \textit{functional diffeology} $\D_{\text{func}}$ is defined to be the set of parametrizations $P\colon U_P \to C^\infty(X, Y)$ whose adjoints $\ad(P) \colon U_P \times X \to Y$ are smooth.  
\item\label{item:final_diffeology}
	Let ${\mathcal F} \coloneqq\{f_i \colon Y_i \to X\}_{i\in I}$ be a set of maps from diffeological spaces $(Y_i, \D^{Y_i})$ ($i \in I$) to a set $X$. 
	Then, we can define a diffeology $\D^X$ of $X$ to be the set of parametrizations $P \colon U \to X$ satisfying the following condition;
	for $r \in U$,
	(i) there exists an open neighborhood $V_r$ of $r$ in $U$ such that $P|_{V_r}$ is constant, or
	(ii) for $i\in I$, there exists an open neighborhood $V_{r,i}$ of $r$ in $U$ and a plot $(P_i\colon V_{r,i}\to Y_i)\in \D^{Y_i}$ with $P|_{V_{r,i}} = f_i\circ P_i$.
	We call $\D^X$ the \textit{final diffeology} of $X$ with respect to ${\mathcal F}$. This is the smallest diffeology on $X$ making all $f_i$ smooth.

	Moreover, by definition, a surjective map $\pi\colon X\to Y$ between diffeological spaces is a \textit{subduction} if the diffeology of $Y$ is the final diffeology with respect to $\pi$.  

	We shall say that a smooth surjection $\pi\colon X\to Y$ is a \textit{local subduction} if for a point $x \in X$ and each plot $P \colon U_P \to Y$ with $P(0) = \pi(x)$,
	there exist an open neighborhood $W$ of $0$ in $U_P$ and a plot $Q \colon W\to X$ such that $Q(0) = x$ and $\pi\circ Q = P|_W$. 
\item\label{item:sum_diff}
	For a family of diffeological spaces $\{(X_i, \D_i) \}_{i \in I}$,
	the coproduct $\coprod_{i\in I}X_i$ has the final diffeology with respect to the set of canonical inclusions. The diffeology is called the \textit{sum diffeology}. 
\item\label{item:quotient_diff}
	Let $(X, \D)$ be a diffeological space with an equivalence relation $\sim$.
	Then, the final diffeology of $X/\mathord{\sim}$ with respect to the quotient map $q\colon X \to X/\mathord{\sim}$ is called the \textit{quotient diffeology}. 
	In particular $q$ is a subduction. 
\end{enumerate}
\end{ex}

We have a result concerning the construction (\ref{item:induction}).
\begin{prop}\label{prop:induction}\text{\em (\cite[1.36]{PIZ12})}
An induction $i \colon (A, \D^A) \to (X, \D^X)$ gives rise to a diffeomorphism $i\colon (A, \D^A)\stackrel{\cong}{\to} (\im(i), \D^{\im(i)}_{\text{\em sub}})$.
\end{prop}

The constructions (\ref{item:product_diff}) and (\ref{item:functional_diffeology}) above enable us to obtain an adjoint pair 
\[
 \text{-} \times X : \mathsf{Diff} \leftrightarrows \mathsf{Diff} : C^\infty(X, \text{-}).
\]
Moreover, the limit and colimit in $\mathsf{Diff}$ are constructed explicitly by (\ref{item:product_diff}) with (\ref{item:sub_diff}) and (\ref{item:sum_diff}) with (\ref{item:quotient_diff}), respectively,  via the forgetful functor from $\mathsf{Diff}$ to the category of sets; see also \cite[Section 2]{C-S-W}.
Thus, we have 
\begin{thm} \text{\em (\cite{B-H, PIZ85})}
The category $\mathsf{Diff}$ is complete, cocomplete and cartesian closed. 
\end{thm}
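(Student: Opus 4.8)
The plan is to reduce everything to the forgetful functor $U\colon\mathsf{Diff}\to\mathsf{Set}$ together with the exponential adjunction. For (co)completeness I would show that $U$ \emph{creates} small limits and colimits. Given a small diagram $D\colon J\to\mathsf{Diff}$, first form the limit $L$ (resp.\ colimit $C$) of $U\circ D$ in $\mathsf{Set}$, which exists since $\mathsf{Set}$ is (co)complete; let $p_j\colon L\to D(j)$ and $\iota_j\colon D(j)\to C$ be the structure maps. I would then equip $L$ with the initial diffeology for the family $\{p_j\}_j$ in the sense of Example~\ref{ex:manifolds_subsets_products}(\ref{item:induction}), and $C$ with the final diffeology for $\{\iota_j\}_j$ in the sense of Example~\ref{ex:manifolds_subsets_products}(\ref{item:final_diffeology}). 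Since these are by construction the largest (resp.\ smallest) diffeologies making all $p_j$ (resp.\ all $\iota_j$) smooth, and the (co)cone identities hold on underlying sets, $L$ becomes a cone over $D$ and $C$ a cocone under $D$ in $\mathsf{Diff}$ with no further work.

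Next I would check the universal properties. For $L$: given a diffeological cone $(Y,\{q_j\})$, the unique set map $u\colon Y\to L$ with $p_j\circ u=q_j$ is smooth because for every plot $P$ of $Y$ the composites $p_j\circ(u\circ P)=q_j\circ P$ are plots, so $u\circ P$ lies in the initial diffeology of $L$. For $C$: given a diffeological cocone $(Z,\{r_j\})$, the induced set map $v\colon C\to Z$ is smooth because a plot $P$ of the final diffeology of $C$ is, locally about each point of its domain, either constant or of the form $\iota_j\circ P_j$ for some plot $P_j$ of $D(j)$; composing with $v$ yields something locally constant or equal to $r_j\circ P_j$, a plot of $Z$, and the locality axiom then shows $v\circ P$ is a plot. (The product and sum diffeologies of Example~\ref{ex:manifolds_subsets_products}(\ref{item:product_diff}),(\ref{item:sum_diff}) are the discrete-$J$ instances.)

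For cartesian closedness, binary products exist by the first step, so it remains to produce, for each diffeological space $X$, a right adjoint to $\text{-}\times X$. I would take $C^\infty(X,\text{-})$ equipped with the functional diffeology of Example~\ref{ex:manifolds_subsets_products}(\ref{item:functional_diffeology}) and establish, naturally in $Y$ and $Z$, the bijection $\mathsf{Diff}(Y\times X,Z)\cong\mathsf{Diff}(Y,C^\infty(X,Z))$ given by the usual currying $f\mapsto\ad(f)$, $\ad(f)(y)=f(y,\text{-})$. In the forward direction, $\ad(f)(y)$ is smooth because $x\mapsto(y,x)$ is a plot of $Y\times X$, and $\ad(f)$ is smooth because for any plot $P$ of $Y$ the adjoint of $\ad(f)\circ P$ equals $f\circ(P\times\id_X)$, which is smooth. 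In the backward direction, for smooth $g\colon Y\to C^\infty(X,Z)$ the set map $g^{\vee}\colon Y\times X\to Z$, $g^{\vee}(y,x)=g(y)(x)$, is smooth because for any plot $Q=(Q_1,Q_2)$ of $Y\times X$ one has $g^{\vee}\circ Q=\ad(g\circ Q_1)\circ(\id_{U_Q},Q_2)$, a composite of smooth maps ($\ad(g\circ Q_1)$ being smooth since $g\circ Q_1$ is a plot of $C^\infty(X,Z)$, and $(\id_{U_Q},Q_2)$ a plot of $U_Q\times X$). The two assignments are mutually inverse and natural by inspection, giving $\text{-}\times X\dashv C^\infty(X,\text{-})$.

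I expect the only genuinely delicate points to be the verifications that the initial, final, and functional diffeologies as described actually satisfy the three axioms of a diffeology — this is where the locality axiom does the real work, particularly for the final and functional diffeologies — together with the two-directional smoothness check for the exponential adjunction above; once these are in place, the universal properties and the naturality statements are purely formal, and the remaining coherence for an adjunction is routine bookkeeping.
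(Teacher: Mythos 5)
Your proposal is correct and follows essentially the same route the paper indicates: limits and colimits are lifted from $\mathsf{Set}$ along the forgetful functor using the initial (product/sub) and final (sum/quotient) diffeologies, and cartesian closedness comes from the currying adjunction $\text{-}\times X\dashv C^\infty(X,\text{-})$ with the functional diffeology. The paper merely cites \cite{B-H, PIZ85} and sketches these constructions, so your filled-in verifications (in particular the two-directional smoothness check for the exponential adjunction) are the standard ones and are sound.
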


Another adjoint pair is given by the \emph{$D$-topology functor} and the \emph{continuous diffeology functor}
\begin{equation*}
 D\colon\mathsf{Diff}\leftrightarrows\mathsf{Top}:\!C, 
\end{equation*}
where $\mathsf{Top}$ is the category of topological spaces and continuous maps, and
\begin{itemize}
\item
	for $(X,\mathcal{D}^X)\in\mathsf{Diff}$,
	the topological space $D(X)$ consists of the underlying set $X$ and the set of open sets $\mathcal{O}$ defined by
	\begin{equation*}
	\mathcal{O}\coloneqq\{O\subset X\mid P^{-1}(O)\text{ is an open set of }U_P\text{ for each }P\in\mathcal{D}^X\},
	\end{equation*}
\item
	for $Y\in\mathsf{Top}$,
	the diffeological space $C(Y)$ consists of the underlying set $Y$ and the diffeology
	$\mathcal{D}^Y\coloneqq\{\text{continuous maps }P\colon U_P\to Y\}$.
\end{itemize}

\begin{ex}\label{ex:Dm}
The composite of the functor $\xymatrix@C15pt@R20pt{\mathsf{Mfd} \ar[r]^m & \mathsf{Diff} \ar[r]^D & \mathsf{Top}}$ coincides with the forgetful functor; see \cite[Section 3]{C-S-W}.   
We refer the reader to \cite[Proposition 3.3]{C-S-W}, \cite[Proposition 2.1]{S-Y-H} and \cite[Theorem 1.5]{Kihara} for other properties of the adjoint pair $D \dashv C$. 
\end{ex}

\begin{rem}\label{rem:The_quotient_map}
We see that $X \cong \colim_{P\in \D}U_P$ as a diffeological space; see \cite[Exercise 33]{PIZ12} and \cite[Proposition 2.7]{C-S-W}. 
Since the $D$-topology functor $D \colon \mathsf{Diff} \to \mathsf{Top}$ is the left adjoint, it follows that $D$ preserves colimits.
Therefore, natural maps give rise to homeomorphisms  
\[
 D(X) \cong D(\colim_{P\in \D}U_P) \cong \colim_{P\in \D}D(U_P) \cong  \colim_{P\in \D}U_P, 
\]
where the last space is endowed with the quotient topology; see also Example \ref{ex:Dm} for the fact that $D(U_P)=U_P$.  
\end{rem}

\section{A Riemannian diffeological space}\label{sect:assertions}
\subsection{Diffeological Riemannian metrics}
Let $\mathsf{Euc}$ denote the category consisting of domains and smooth maps.
Let $\widehat{T} \colon \mathsf{Euc} \to  \mathsf{Euc}$ be the functor defined by 
\[
 \widehat{T}(U) \coloneqq U \times \R^{\dim U}
\] 
and  
$\mathcal{Y} \colon \mathsf{Euc} \to  \mathsf{Diff}$ the Yoneda functor. 
We recall the tangent functor $T \colon \mathsf{Diff} \to \mathsf{Diff}$ from the category of diffeological spaces to itself in the sense of Blohmann \cite{B23, B24}, 
which is the left Kan extension 
$\mathbb{L}\mathcal{Y}\widehat{T}\coloneqq\text{Lan}_{\mathcal{Y}}\mathcal{Y}\widehat{T}$ of the functor 
$\mathcal{Y}\widehat{T} \colon \mathsf{Euc} \to  \mathsf{Diff}$ along the Yoneda functor $\mathcal{Y}$; see also \cite{M}.   
For a diffeological space $(X, \D)$, it turns out that 
\[
 T(X) = \colim_{P \in \D} (U_P\times \R^{\dim U_P}).
\] 
Here, the diffeology $\D$ is regarded as a category whose objects are plots of $X$ and whose morphisms are smooth maps $h\colon U_P \to U_Q$ with $Q\circ h = P$.

By using the functor $\widehat{T}_2 \colon \mathsf{Euc} \to \mathsf{Diff}$ defined by 
$\widehat{T}_2(U) \coloneqq \widehat{T}(U)\times_U\widehat{T}(U)=U \times \R^{\dim U} \times  \R^{\dim U}$ and 
$\widehat{T}_2(f) = f \times Jf \times Jf$ with the Jacobian matrix $Jf$ for a smooth map $f\colon U \to V$, 
we have  the functor 
$T_2\coloneqq\mathbb{L}\mathcal{Y}\widehat{T}_2 \colon  \mathsf{Diff} \to \mathsf{Diff}$ 
via the left Kan extension along the Yoneda functor $\mathcal{Y}$. 
Observe that as a diffeological space, 
\[
 T_2(X) = \colim_{P \in \D}(U_P\times \R^{\dim U_P}\times \R^{\dim U_P}).
\]
We may write $[x, v_1, v_2]_P$ for an element in $T_2(X)$ 
which has an element $(x, v_1, v_2)$ in $U_P\times \R^{\dim U_P}\times \R^{\dim U_P}$ for some plot $P$ as a representative.

\begin{defn}\label{defn:WR}
A map $g \colon T_2(X) \to \R$ is a \textit{weak Riemannian metric} on $X$ 
if the composite $g \circ \pi_{\widehat{T}_2(U_P)}\eqqcolon g(P)$ is a symmetric and positive covariant $2$-tensor on $U_P$ 
for each plot $P$ of $X$, where 
$\pi_{\widehat{T}_2(U_P)}$ denotes the canonical map 
$\widehat{T}_2(U_P)\to T_2(X)$. 
\end{defn}

\begin{rem}\label{rem:smoothness_g_and_g(P)}
In the definition above, each map $g \circ \pi_{\widehat{T}_2(U_P)}$ is smooth in the usual sense and then $g$ is smooth in the diffeological sense. 
In fact, $T_2(X)$ is endowed with the quotient diffeology; 
see Example \ref{ex:manifolds_subsets_products} (\ref{item:quotient_diff}). 
\end{rem}

We introduce the Riemannian metric on a diffeological space described in \cite[Page 3]{PIZ23}. 
To this end, we recall the definition of covariant tensors of a diffeological space.

Let $(X, \D^X)$ be a diffeological space and $\D^X(U)\subset\mathcal{D}^X$ the set of plots whose domains are the common domain $U$. 
Let $T^k(U)$ denote the set of covariant $k$-tensors on a domain $U$.
A \textit{covariant $k$-tensor} $\nu$ in the sense in Iglesias-Zemmour is a natural transformation which fits in the diagram 
\begin{equation}\label{eq:R-metric_g}
 \xymatrix@C60pt@R20pt{
 \mathsf{Euc}^{\text{op}} \rtwocell^{\D^X}_{T^k}{\hspace*{0.3cm} \nu} & 
 \mathsf{Sets}
 }. 
\end{equation}
Here, we regard $\D^X$ as a functor in which $\D^X(f) \colon D^X(V) \to \D^X(U)$ is defined by $\D^X(f)(P_V) = P_V\circ f$ for a smooth map $f\colon U \to V$.

We may write $\nu(P)$ for $\nu_{U_P}(P)$.
A covariant $k$-tensor $\nu$ is \textit{(anti-) symmetric} if $\nu(P)$ is (anti-) symmetric in the usual sense for each $P$.
A \textit{(differential) $k$-form} on a diffeological space $X$ is an anti-symmetric $k$-tensor on $X$.
The set of all $k$-forms on $X$ is denoted by $\Omega^k(X)$.

\begin{defn}[{\cite[p.~3]{PIZ23}}]\label{def:metric}
Let $X$ be a diffeological space.
A $2$-tensor $g$ on $X$ is a \emph{Riemannian metric} on $X$ if it satisfies the following three conditions.
\begin{enumerate}
\item\label{item:symmetry_IZ}
	(Symmetric)
	The tensor $g$ is symmetric.
\item\label{item:positivity_IZ}
	(Positivity)
	For all path $\gamma \in \text{Path}(X)\coloneqq C^{\infty}(\R,X)$, we have $g(\gamma) \ge 0$; that is,  
		$g(\gamma)_t(1, 1) \ge 0$ for $t \in \text{dom}(\gamma) =\R$, where $1 = (\frac{d}{dt})_t$ is the canonical base of 
		$T_t(\text{dom}(\gamma)) = \R$.
\item\label{item:definiteness_IZ}
	(Definiteness)
	If $g(\gamma)_t(1, 1) = 0$ for all $\gamma \in \text{Path}(X)$, 
	then we have $\alpha(\gamma)_t(1) = 0$ for any $\alpha \in \Omega^1 (X)$.
\end{enumerate}
\end{defn}

\begin{rem}
Another definition of the definiteness is proposed in \cite[p.~3]{PIZ23} with ``pointed differential forms''.
\end{rem}

\begin{prop}\label{prop:one-to-one} Let $(X,\D)$ be a diffeological space. 
There exists a one-to-one correspondence between the set of weak Riemannian metrics $g \colon T_2(X) \to \R$ and that of symmetric and positive covariant $2$-tensors in the sense of Definition~\ref{def:metric}. 
\end{prop}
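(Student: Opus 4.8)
\textit{Proof proposal.}
The plan is to write down explicit mutually inverse assignments between the two sets, using nothing beyond the colimit presentation $T_2(X) = \colim_{P \in \D}\widehat{T}_2(U_P)$ (the colimit being taken over the category $\D$ in which a morphism from $P$ to $Q$ is a smooth map $h \colon U_P \to U_Q$ with $Q \circ h = P$) together with its universal property. Recall that the underlying set of $T_2(X)$ is the colimit of the underlying sets of the $\widehat{T}_2(U_P)$, so that a map $T_2(X) \to \R$ is the same thing as a cocone on the diagram $P \mapsto \widehat{T}_2(U_P)$, i.e.\ a family of maps $g_P \colon \widehat{T}_2(U_P) \to \R$ with $g_P = g_Q \circ \widehat{T}_2(h)$ whenever $Q \circ h = P$. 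Observe also that, by the definition $\widehat{T}_2(h) = h \times Jh \times Jh$, precomposition with $\widehat{T}_2(h)$ is precisely the pullback operation $h^\ast$ on covariant $2$-tensors.

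First I would send a weak Riemannian metric $g$ to the assignment $\nu_g(P) \coloneqq g(P) = g \circ \pi_{\widehat{T}_2(U_P)}$, regarded as a covariant $2$-tensor on $U_P$ via $(\nu_g(P))_x(v_1,v_2) = g([x,v_1,v_2]_P)$. Each $\nu_g(P)$ is symmetric and positive by Definition \ref{defn:WR}. For naturality, note that the canonical maps $\pi_{\widehat{T}_2(U_P)}$ form a cocone, so $\pi_{\widehat{T}_2(U_P)} = \pi_{\widehat{T}_2(U_Q)} \circ \widehat{T}_2(h)$ for every morphism $h$, whence $\nu_g(Q \circ h) = g(Q) \circ \widehat{T}_2(h) = h^\ast \nu_g(Q)$; thus $\nu_g$ is a symmetric positive covariant $2$-tensor in the sense of Definition \ref{def:metric}.

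In the other direction, given such a tensor $\nu$, the family $g_P \colon (x,v_1,v_2) \mapsto (\nu(P))_x(v_1,v_2)$ is a cocone, since the cocone condition $g_P = g_Q \circ \widehat{T}_2(h)$ is, once unwound, exactly the naturality equation $\nu(Q \circ h) = h^\ast \nu(Q)$. The universal property then yields a unique map $g_\nu \colon T_2(X) \to \R$ with $g_\nu(P) = \nu(P)$ for all $P$, which is therefore a weak Riemannian metric (automatically smooth in the diffeological sense by Remark \ref{rem:smoothness_g_and_g(P)}, as $T_2(X)$ carries the quotient/colimit diffeology). The two assignments are mutually inverse because both reduce to the identity on the data $\{P \mapsto \nu(P)\}$: one has $\nu_{g_\nu}(P) = \nu(P)$ and $g_{\nu_g}(P) = g(P)$ for all $P$, and a weak Riemannian metric is determined by the collection $\{g(P)\}_P$ since $\coprod_P \pi_{\widehat{T}_2(U_P)}$ is onto $T_2(X)$. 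I do not expect a genuine obstacle here; the only step deserving care is checking that the cocone condition for $\{g_P\}$ matches naturality of $\nu$ term by term, and this is immediate from the explicit formula $\widehat{T}_2(h) = h \times Jh \times Jh$. The statement is essentially a formal consequence of the left Kan extension / colimit description of $T_2$.
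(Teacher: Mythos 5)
Your formal colimit argument is fine and is essentially what the paper does: a map $T_2(X)\to\R$ is the same as a cocone $\{g_P\}$, the cocone condition $g_P=g_Q\circ\widehat{T}_2(h)=h^*g_Q$ is exactly naturality of the $2$-tensor, smoothness is automatic from the quotient diffeology, and the two assignments are inverse because both sides are determined by the family $\{g(P)\}_P$. But there is a gap in how you handle positivity, and it is the only non-formal content of the proposition. ``Positive in the sense of Definition~\ref{def:metric}'' is condition (\ref{item:positivity_IZ}) there, which asks only that $g(\gamma)_t(1,1)\ge 0$ for smooth \emph{paths} $\gamma\in C^\infty(\R,X)$ and for the single tangent vector $1=(\tfrac{d}{dt})_t$. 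By contrast, Definition~\ref{defn:WR} demands that $g(P)_r(v,v)\ge 0$ for \emph{every} plot $P$, every $r\in U_P$ and every $v\in\R^{\dim U_P}$. Your forward direction is fine (paths are plots), but in the reverse direction you take a tensor $\nu$ that is a priori positive only on paths, build $g_\nu$ by the universal property, and then assert it ``is therefore a weak Riemannian metric''; that last claim is exactly what needs proof, since nothing formal guarantees $\nu(P)_r(v,v)\ge 0$ for a plot of dimension $>1$.

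The missing step, which the paper supplies, is: given $P\in\D$, $r\in U_P$ and $v\in T_rU_P$, set $\gamma_v(t)=r+tv$ (suitably arranged to land in $U_P$) and $\gamma^v=P\circ\gamma_v\in\mathrm{Path}(X)$; then naturality of $\nu$ gives $\nu(P)_r(v,v)=\nu(\gamma^v)_0(1,1)\ge 0$, so path-positivity does imply positivity of $\nu(P)$ as a $2$-tensor on every plot domain. With this one observation added, your proof is complete and coincides with the paper's.
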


\begin{proof}
A natural transformation $\{g(P)\}_{P\in \D}$ consisting of covariant $2$-tensors gives rise to the smooth map $g$ (see Remark~\ref{rem:smoothness_g_and_g(P)}). 
Given a map $g \colon T_2(X) \to \R$ such that $g(P)= g\circ \pi_{\widehat{T}_2(U_P)}$ is a $2$-tensor on $U_P$ for each $P\in\D$, 
we obtain a natural transformation $\{g(P)\}_{P\in \D}$. 
Then $g$ and $g(P)$ fit into the commutative diagram 
\begin{equation}\label{eq:g(P)}
\begin{split}
 \xymatrix@C30pt@R20pt{
 U_P\times \R^{\dim U_P}\times \R^{\dim U_P}\ar[d]_{\pi_{\widehat{T}_2(U_P)}} \ar[rd]^{g(P)} \\
 T_2(X) \ar[r]_g & \R. 
 }
\end{split}
\end{equation}
It is easy to see that $g$ is symmetric if and only if $g(P)$ is symmetric for each $P$.

We consider the equivalence between the positivity of $g(P)$ and the condition (\ref{item:positivity_IZ}) in Definition \ref{def:metric}. 
Suppose that $g(P)$ is positive for each plot $P$. 
Since a path $\gamma$ is a plot, it is immediate that the condition (\ref{item:positivity_IZ}) holds.

Conversely, suppose that $g(P)_r(v, v)=0$, where 
$P \in \D$, $r \in U_P$ and $v \in T_rU_P$. 
We apply the same argument as in the proof of \cite[243 Exercise 2]{PIZ25}. 
Let $\gamma_v(t) = r + tv$ and $\gamma^v=P\circ \gamma_v$. 
Then, we see that $g(P)_r(v, v)=g(\gamma^v)_0(1, 1)$. 
Thus, the condition (\ref{item:positivity_IZ}) implies that $g(P)$ is positive. 
As a consequence, the universality of the colimit yields the result. 
\end{proof}

In what follows, 
we call a diffeological space $X$ endowed with a weak Riemannian metric $g$ a \emph{weak Riemannian diffeological space} and denote it by $(X, g)$.

\begin{rem}
In order to define a weak Riemannian metric on a diffeological space $X$, the elastic conditions on $X$ as in \cite[Section 1.2]{B23} 
and \cite[Section 2.3.3]{B24} are not assumed. 
In particular, we do not require the condition that the tangent functor $T$ is compatible with limits; see \cite[Axiom (E1)]{B23}.

We observe that $T_2(X)$ is not in general diffeomorphic to the pullback $T(X)\times_X T(X)$ along the natural map $T(X) \to X$; 
see \cite[Examples 2.3.12 and 2.3.13]{B24} and \cite[Example 3.9]{M}
\end{rem}

\begin{defn}\label{defn:definiteness}
A weak Riemannian metric $g \colon T_2(X) \to \R$ is \textit{definite} 
if there exists a generating family ${\mathcal G}$ of the diffeology $\D$ of $X$ such that the symmetric positive covariant $2$-tensor $g(P)$, 
which corresponds to $g$ via the bijection in Proposition \ref{prop:one-to-one}, 
is definite in the usual sense for every $P \in \mathcal{G}$.  
\end{defn}

\begin{rem} 
Let $(X, g)$ be a weak Riemannian diffeological space with the diffeology $\D$. 
In general, if $\D$ is generated by the empty set, then the metric $g$ is definite. 
In this case, the diffeology $\D$ is indeed discrete; see \cite[1.67]{PIZ12}.   
\end{rem}

\begin{ex}\label{ex:manifold_G}
Let $N$ be a Riemannian manifold endowed with a metric $g_N$. 
We see that the metric $g_N$ is a definite weak Riemannian metric in the sense of Definition \ref{defn:definiteness} with respect to both generating families $\mathcal{G}_{\text{atlas}}$ and $\mathcal{G}_{\text{imm}}$ in Example~\ref{ex:manifolds_subsets_products} (\ref{item:generating_family}). 
\end{ex}

\begin{prop}\label{prop:IZ-Ours}
The definiteness of a weak Riemannian metric in Definition \ref{defn:definiteness} induces that in Definition \ref{def:metric}. 
\end{prop}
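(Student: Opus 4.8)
The plan is to reduce everything to the locality property of a generating family. Suppose $g\colon T_2(X)\to\R$ is a weak Riemannian metric that is definite in the sense of Definition~\ref{defn:definiteness}. By Proposition~\ref{prop:one-to-one} we may identify $g$ with the corresponding symmetric positive covariant $2$-tensor, writing $g(P)$ for its value on a plot $P$; definiteness then provides a generating family $\mathcal{G}$ of $\D$ such that $g(Q)_r$ is a positive-definite inner product on $\R^{\dim U_Q}$ for every $Q\in\mathcal{G}$ and every $r\in U_Q$. Conditions~(\ref{item:symmetry_IZ}) and~(\ref{item:positivity_IZ}) of Definition~\ref{def:metric} hold for any weak Riemannian metric, again by Proposition~\ref{prop:one-to-one}, so the content of the proposition lies entirely in the definiteness condition~(\ref{item:definiteness_IZ}).

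First I would fix a path $\gamma\in\text{Path}(X)$ and a parameter $t\in\R$ with $g(\gamma)_t(1,1)=0$, aiming to show $\alpha(\gamma)_t(1)=0$ for every $\alpha\in\Omega^1(X)$. The key step is to factor $\gamma$ locally through $\mathcal{G}$: since $\langle\mathcal{G}\rangle=\D$ and $\gamma$ is a plot, the description of $\langle\mathcal{G}\rangle$ in Example~\ref{ex:manifolds_subsets_products}~(\ref{item:generating_family}) supplies an open neighbourhood $V$ of $t$ in $\R$ on which either $\gamma|_V$ is a constant plot, or $\gamma|_V=Q\circ f$ for some $Q\in\mathcal{G}$ and some smooth map $f\colon V\to U_Q$. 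Naturality of $g$ and $\alpha$ along the inclusion $V\hookrightarrow\R$ shows that $g(\gamma)_t(1,1)$ and $\alpha(\gamma)_t(1)$ depend only on $\gamma|_V$, so I may replace $\gamma$ by $\gamma|_V$.

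In the constant case, $\gamma|_V$ factors through the one-point domain $\R^0$, and since the only $1$-tensor on $\R^0$ is zero, naturality gives $\alpha(\gamma|_V)=0$, hence $\alpha(\gamma)_t(1)=0$. In the non-constant case, naturality gives $g(\gamma|_V)=f^*g(Q)$ and $\alpha(\gamma|_V)=f^*\alpha(Q)$, so $0=g(\gamma)_t(1,1)=g(Q)_{f(t)}\bigl(Df_t(1),Df_t(1)\bigr)$; positive-definiteness of $g(Q)_{f(t)}$ then forces $Df_t(1)=f'(t)=0$, whence $\alpha(\gamma)_t(1)=\alpha(Q)_{f(t)}\bigl(Df_t(1)\bigr)=0$ by linearity. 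This verifies condition~(\ref{item:definiteness_IZ}), and hence the proposition.

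I expect the only delicate point to be this local-factorization step: one must remember to split off the degenerate ``locally constant'' alternative, using that there are no nonzero $1$-forms on $\R^0$, rather than only handling factorizations through $\mathcal{G}$, and one must note that evaluating a form or the metric on a path at the parameter $t$ is a germ-local operation, so that passing from $\gamma$ to $\gamma|_V$ changes nothing. The remainder is routine naturality bookkeeping together with the elementary fact that a positive-definite bilinear form has no nonzero isotropic vector.
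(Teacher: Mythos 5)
Your proof is correct and follows essentially the same route as the paper: factor $\gamma$ locally through the generating family $\mathcal{G}$, use positive-definiteness of $g(Q)_{f(t)}$ to force $f'(t)=0$, and conclude $\alpha(\gamma)_t(1)=\alpha(Q)_{f(t)}(0)=0$ by naturality. Your explicit treatment of the locally constant branch of the definition of $\langle\mathcal{G}\rangle$ is a small extra care that the paper's proof passes over silently, but it does not change the argument.
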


\begin{proof}
Let $g$ be a definite weak Riemannian metric. 
To show that $g$ is definite in the sense of Definition \ref{def:metric}, suppose that 
$g(\gamma)_t(1, 1) = 0$ for $\gamma \in \text{Path}(X)$. 
By assumption, 
there exist an open neighborhood $V_t$ of $t$ in $\mathbb{R}$, 
a plot $Q \in\mathcal{G}$ and a smooth map $f\colon V_t \to U_Q$ such that $\gamma|_{V_t} = Q \circ f$.
Then, we see that 
\[
 0=g(\gamma|_{V_t})_t(1, 1) 
 = g(Q \circ f)_t(1, 1) 
 = g(Q)_{f(t)}\left(\frac{df}{dt}(t), \frac{df}{dt}(t)\right).
\]
The definiteness of $g(Q)$ yields that $\dfrac{df}{dt}(t) = 0$.
Thus, for any $\alpha\in\Omega^1(X)$, we have
\begin{equation*}
 \alpha(\gamma)_t(1)
 =\alpha(\gamma|_{V_t})_t(1)
 = \alpha(Q)_{f(t)}\left(\frac{df}{dt}(t)\right)= \alpha(Q)_{f(t)}(0) = 0.   
\end{equation*}
This completes the proof. 
\end{proof}

\subsection{Manifolds in Riemannian diffeology}\label{subsection:IDMfd}
Let $C^\infty\mathsf{Mfd}$ be the category of (possibly infinite dimensional) manifolds modeled on locally convex spaces with convenient calculus; see \cite{K-M}. 
Observe that, by definition, 
a map $\R \to E$ to a locally convex vector space is a \textit{smooth curve} if all derivatives exist and are continuous. 
We call the final topology for the set of smooth curves into $E$ the \textit{$c^\infty$-topology} of $E$. 
A map $f \colon U \to V$ between $c^\infty$-open subsets of locally convex vector spaces is \textit{smooth} 
if $f\circ c$ is a smooth curve in $V$ for each smooth curve $c$ in $U$.

We recall the fully faithful functor 
$I\colon C^\infty\mathsf{Mfd} \to \mathsf{Diff}$ 
due to Kihara \cite[Lemma 2.5]{Kihara}. 
Here the underlying set of $I(M)$ is $M$ and the diffeology consists of all smooth maps into $M$ from open subsets of Euclidean spaces; 
that is, the diffeology is the standard one.

\begin{defn}(\cite[4.1 Definition]{Sc}) \label{defn:metric_on_Mfd}
Let $M$ be a manifold modeled on locally convex spaces. 
A \textit{weak Riemannian metric} $g$ is a smooth map 
\[
 g \colon TM\oplus TM \to \R, \ (v_x, w_x)\mapsto g_x(v_x, w_x)
\]
(where $TM\oplus TM$ is the Whitney sum) satisfying 
\begin{enumerate}[label=(\alph*)]
\item
	$g_x\coloneqq g|_{T_xM\times T_xM}$ is symmetric and bilinear for all $x \in M$, 
\item
	$g_x(v, v)\geq 0$ for all $v \in T_xM$, with $g_x(v, v) =0$ if and only if $v = 0$. 
\end{enumerate}
\end{defn}

\begin{prop}\label{prop:IDMfd}\label{prop:ID_Mfd}
A weak Riemannian metric on a manifold $M$ in $C^\infty\mathsf{Mfd}$ gives rise to a weak Riemannian metric on the diffeological space $I(M)$. 
\end{prop}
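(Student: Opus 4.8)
The plan is to transfer the weak Riemannian metric $g$ on $M$ to a map $g' \colon T_2(I(M)) \to \R$ by checking the defining condition of Definition~\ref{defn:WR} plot by plot, and to use the universal property of the colimit $T_2(I(M)) = \colim_{P \in \D}(U_P \times \R^{\dim U_P} \times \R^{\dim U_P})$ to assemble these into a single smooth map. First I would recall that the diffeology on $I(M)$ consists of all maps $P \colon U_P \to M$ from domains that are smooth in the convenient-calculus sense, so each such $P$ has a well-defined fibrewise differential $T_rP \colon T_rU_P = \R^{\dim U_P} \to T_{P(r)}M$ (here I use that $U_P$ is finite-dimensional, so convenient smoothness agrees with ordinary smoothness on the source and the tangent map is the usual Jacobian composed with $TP$). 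For a plot $P$ define
\[
 g'(P)_r(v_1, v_2) \coloneqq g_{P(r)}\bigl(T_rP(v_1), T_rP(v_2)\bigr).
\]

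Next I would verify that the family $\{g'(P)\}_{P \in \D}$ is natural in $P$, i.e.\ that for a smooth $h \colon U_P \to U_Q$ with $Q \circ h = P$ we have $g'(P) = h^*(g'(Q))$: this is just the chain rule $T_r(Q \circ h) = T_{h(r)}Q \circ T_rh$ together with the definition of $\widehat{T}_2(h) = h \times Jh \times Jh$, so the maps $g'(P)$ descend to a well-defined map $g'$ out of the colimit, fitting into a diagram of the form \eqref{eq:g(P)}. Then I would check the three requirements: each $g'(P)$ is a covariant $2$-tensor on $U_P$ (it is $\R$-bilinear and, being a pullback of the smooth map $g$ along the smooth map $r \mapsto (P(r), T_rP(-), T_rP(-))$, it is smooth in $r$); it is symmetric because $g_x$ is symmetric; and it is positive because $g_x(v,v) \ge 0$ for all $v$. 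By Proposition~\ref{prop:one-to-one} this $g'$ is exactly a weak Riemannian metric on $I(M)$ in the sense of Definition~\ref{defn:WR}. One should also record that $g'$ is the ``correct'' transfer, e.g.\ that the obvious map $T_2(I(M)) \to T M \oplus TM$ intertwines $g'$ and $g$ on each plot, so that the structure really restricts back to $g$.

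The main obstacle I anticipate is purely the smoothness bookkeeping at the interface between convenient calculus and diffeology: one must be careful that the assignment $r \mapsto T_rP$ is smooth as a map $U_P \to \mathrm{Hom}(\R^{\dim U_P}, TM)$ in the appropriate sense, so that the composite with $g \colon TM \oplus TM \to \R$ is smooth as a map $U_P \times \R^{\dim U_P} \times \R^{\dim U_P} \to \R$. This follows because the tangent map of the composite $U_P \xrightarrow{P} M$ is the convenient-smooth map $TP \colon TU_P \to TM$, and $TU_P = U_P \times \R^{\dim U_P}$, so $\mathrm{ad}(g'(P)) = g \circ (TP \times_{P} TP) \colon U_P \times \R^{\dim U_P}\times \R^{\dim U_P} \to \R$ is a composite of convenient-smooth maps with finite-dimensional source, hence ordinarily smooth. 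Everything else — naturality, symmetry, positivity — is formal once this point is settled. I would also remark that the construction is plainly functorial and compatible with the functor $I$, which is worth stating since the paper later uses $I(M)$ as a source of examples of weak Riemannian diffeological spaces.
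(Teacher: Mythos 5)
Your proposal is correct and follows essentially the same route as the paper: define the tensor on each plot by pulling back $g$ along the differential of the plot, check naturality via the chain rule, and descend through the colimit defining $T_2(I(M))$. The only cosmetic difference is that the paper packages the plotwise maps $(r,v_1,v_2)\mapsto (P(r),T_rP(v_1),T_rP(v_2))$ into a single canonical map $\Theta_M\colon T_2(I(M))\to TM\oplus TM$ before composing with $g$ (a map it reuses later), which is exactly the ``obvious map'' you mention in passing.
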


\begin{rem} 
Bastiani calculus is mainly used in \cite{Sc}; see \cite[Section 1.3]{Sc}.
We see that Bastiani calculus gives rise to convenient calculus; see \cite[A.7]{Sc} and \cite[Appendix B]{Kihara} for more details. 
Then, the definition of a metric in \cite[4.1 Definition]{Sc} is valid for infinite-dimensional manifolds under convenient calculus.

We may choose the category $C^\infty\mathsf{Mfd}_\text{conv}$ of manifolds modeled on locally convex spaces with Bastiani calculus. 
However the natural faithful functor $J\colon C^\infty\mathsf{Mfd}_\text{conv}\to \mathsf{Diff}$ is not full; see \cite[Corollary B.3]{Kihara}.
\end{rem}

\begin{proof}[Proof of Proposition \ref{prop:IDMfd}]
We consider the Whitney sum $TM\oplus TM$ of the tangent bundle $\rho \colon TM \to M$. 
Then, for each plot $P$ of $I(M)$, we have a commutative diagram of solid arrows
\begin{equation} 
 \xymatrix@C20pt@R15pt{
 U_P\times \R^{\dim U_p}\times \R^{\dim U_p}  \ar@/^15pt/[rrd]^-{\varphi_P}   \ar@/_15pt/[ddr]_-{\varphi_P'} \ar@{.>}[rd]^{(\varphi_P', \varphi_P)}& & \\
 & TM\oplus TM \ar[r] \ar[d] & TM\times TM \ar[d]^{\rho\times \rho} \\
 & M \ar[r]_{\Delta} & M\times M
}
\end{equation}
in which the smooth maps $\varphi_P'$ and  $\varphi_P$ are defined by $\varphi_P'(x, u, v) = P(x)$ and $\varphi_P(x, u, v)= (P_*(x, u), P_*(x, v))$, respectively. 
Then, there exists a smooth map $(\varphi'_P, \varphi_P)$ which makes the triangles commutative.

Moreover, we see that 
$(\varphi'_P, \varphi_P)= (\varphi'_Q, \varphi_Q)\circ \widehat{T}_2(f)$ for $f \colon U_P\to U_Q$ with $Q\circ f= P$.
Therefore, there exists a unique smooth map 
$\Theta_M \colon T_2(I(M)) \to I(TM\oplus TM)$ 
in $\mathsf{Diff}$ such that 
$I((\varphi'_P, \varphi_P)) = \Theta_M\circ \pi_{\widetilde{T}_2(U_P)}$ for each $P \in \D_{\text{std}}^M$.  
As a consequence, 
a weak Riemannian metric $g \colon TM\oplus TM\to \R$ gives rise to a weak Riemannian metric $I(g)\circ \Theta_M \colon T_2(I(M)) \to \R$ 
on the diffeological space  $I(M)$. 
\end{proof}

\begin{rem}\label{rem:InfiniteMfd_definiteness}
Let $M$ be an object in $C^\infty\mathsf{Mfd}$ which admits a weak Riemannian metric $g$. 
Then the metric is definite; see Definition  \ref{defn:metric_on_Mfd} (b).
Let $\mathcal{G}$ be the subset of the diffeology of $I(M)$ 
consisting of smooth maps $P \colon U_P\to M$ each of which the map $(P_*)_r \colon T_rU_P \to T_{P(r)}M$ induced by $P$ is injective for $r\in U_P$; 
see \cite[1.53 Definition]{Sc}. 
Then, we see that 
$(M, \langle \mathcal{G} \rangle, I(g)\circ \Theta_M)$ is a weak Riemannian diffeological space 
whose metric is definite with respect to $\mathcal{G}$; see Section \ref{sect:I(M)}. 
\end{rem}

\subsection{A category of  weak Riemannian diffeological spaces} 
We define a category of weak Riemannian diffeological spaces introducing the pullback of a metric.

\begin{defn}\label{defn:Pullback} 
Let $X$ be a diffeological space and $(Y, g)$ be a weak Riemannian diffeological space. 
Let $\varphi \colon X \to Y$ be a smooth map. 
The map $\varphi^*g$ defined by the composite
\[ 
 \xymatrix@C20pt@R15pt{
 T_2(X) \ar[r]^-{T_2(\varphi)} & T_2(Y) \ar[r]^-{g} & \R 
 } 
\]
is called the \textit{pullback of $g$} by $\varphi$. 
\end{defn}

\begin{rem}
Under the same notation as in Definition \ref{defn:Pullback},
consider the pullback $\varphi^*g$ of $g$. For any $[P, u, v, w ] \in T_2(X)$, we have
\[
 \varphi^*g(P)_u(v, w) = g(\varphi \circ P)_u(v, w).
\]
Since $g$ is a weak metric on $Y$ and $\varphi \circ P$ is a plot of $Y$, 
it follows that for each $P \in \D^X$, the pullback $\varphi^*g(P)$ is a positive, symmetric, covariant 2-tensor. 
Therefore, $\varphi^*g$ is a weak Riemannian metric on $X$.
\end{rem}

\begin{prop}\label{prop:An_induction}
Let $(X, g)$ be a weak Riemannian diffeological space whose metric $g$ is definite with respect to a generating family $\mathcal{G}$ and $i\colon A \to X$ an induction. 
Then, the pullback $i^*g$ is a weak Riemannian metric on $A$ 
which is definite with respect to the generating family 
\begin{equation}\label{eq:pullback_generating_family}
 i^*\mathcal{G}\coloneqq\{ P \in \D^A \mid i\circ P \in \mathcal{G}\}. 
\end{equation}
\end{prop}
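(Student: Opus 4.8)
The plan is to check three things in turn: that $i^*g$ is a weak Riemannian metric on $A$; that the family $i^*\mathcal{G}$ defined in \eqref{eq:pullback_generating_family} is a generating family of $\mathcal{D}^A$; and that the symmetric positive covariant $2$-tensor $i^*g(P)$ is definite in the usual sense for each $P\in i^*\mathcal{G}$. The first point is already contained in the Remark following Definition \ref{defn:Pullback}: for a plot $P$ of $A$ we have $i^*g(P)_u(v,w)=g(i\circ P)_u(v,w)$, and since $i\circ P$ is a plot of $X$ and $g$ is a weak Riemannian metric, $i^*g(P)$ is a positive, symmetric, covariant $2$-tensor on $U_P$. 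Granting the second point, the third is then immediate: for $P\in i^*\mathcal{G}$ one has $i\circ P\in\mathcal{G}$ by the definition of $i^*\mathcal{G}$, hence $g(i\circ P)$ is definite because $g$ is definite with respect to $\mathcal{G}$, and $i^*g(P)=g(i\circ P)$ by the identity just recalled; therefore $i^*g(P)$ is definite.

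Thus the substance of the proof is the equality $\langle i^*\mathcal{G}\rangle=\mathcal{D}^A$. First I would invoke Proposition \ref{prop:induction} to replace $(A,\mathcal{D}^A)$ by the diffeological subspace $(\im(i),\mathcal{D}^{\im(i)}_{\text{sub}})$, so that $i$ becomes an inclusion and a parametrization $P\colon U_P\to A$ is a plot of $A$ precisely when $i\circ P$ is a plot of $X$. The inclusion $\langle i^*\mathcal{G}\rangle\subset\mathcal{D}^A$ is clear, since $i^*\mathcal{G}\subset\mathcal{D}^A$ and $\mathcal{D}^A$ is a diffeology. For the reverse inclusion, take $P\in\mathcal{D}^A$ and $r\in U_P$. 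Since $\mathcal{D}^X=\langle\mathcal{G}\rangle$, there is an open neighborhood $V_r$ of $r$ in $U_P$ on which either $i\circ P$ is constant, and then $P|_{V_r}$ is constant because $i$ is injective, or $(i\circ P)|_{V_r}=Q\circ f$ for some $Q\in\mathcal{G}$ and a smooth map $f\colon V_r\to U_Q$. In this second case the composite $Q\circ f$ takes its values in $\im(i)$, and the task is to convert this into a factorization of $P|_{V_r}$ through an element of $i^*\mathcal{G}$: shrinking $V_r$ if necessary, one wants a plot $R$ of $A$ with $i\circ R\in\mathcal{G}$ together with a smooth map $h$ such that $P|_{V_r}=R\circ h$. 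Once a local factorization of this kind, or local constancy, has been established at every point of $U_P$, the very definition of the generated diffeology gives $P\in\langle i^*\mathcal{G}\rangle$, whence $\mathcal{D}^A\subset\langle i^*\mathcal{G}\rangle$ and the desired equality.

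I expect the conversion just described — passing from a local factorization of $i\circ P$ through $\mathcal{G}$ to a local factorization of $P$ through $i^*\mathcal{G}$ — to be the main obstacle. It is exactly the place where the hypothesis that $i$ is an induction enters (through the description of $\mathcal{D}^A$ as the initial diffeology for $i$), and where the chosen generating family $\mathcal{G}$ must interact well with restriction to $\im(i)$: one needs plots of $X$ taking values in $\im(i)$ to admit local factorizations through members of $\mathcal{G}$ that themselves take values in $\im(i)$. This is the property that makes $i^*\mathcal{G}$ large enough to generate $\mathcal{D}^A$, and it is available for the generating families relevant here, such as families of immersions (compare Example \ref{ex:manifold_G} and Remark \ref{rem:InfiniteMfd_definiteness}) and the natural generating families of the mapping spaces studied in Sections \ref{sect:Adjunction_spaces} and \ref{section:MappingSpaces}; carrying this out in the generality required is the technical core of the argument.
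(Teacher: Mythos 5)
Your reduction of the problem is correct and matches the paper's: the observation that $i^*g(P)=g(i\circ P)$ is a symmetric positive $2$-tensor for every plot $P$ of $A$, and the deduction of definiteness on $i^*\mathcal{G}$ from $i^*g(P)_r(v,v)=g(i\circ P)_r(v,v)$ together with $i\circ P\in\mathcal{G}$, are exactly the paper's arguments. But the proposal is not a complete proof: the one substantive step, namely $\D^A\subset\langle i^*\mathcal{G}\rangle$, is precisely the step you do not carry out. You reach the local factorization $(i\circ P)|_{V_r}=Q\circ f$ with $Q\in\mathcal{G}$, state that the task is to convert this into a factorization of $P|_{V_r}$ through an element of $i^*\mathcal{G}$, and then defer that conversion to a property of ``the generating families relevant here.'' The proposition is asserted for an \emph{arbitrary} generating family $\mathcal{G}$ of $\D^X$, so leaving the conversion to case-by-case verification is a genuine gap, not a routine detail.

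The paper's device for this conversion is short: by Proposition \ref{prop:induction} the induction $i$ restricts to a diffeomorphism $A\cong\im(i)$ with inverse $\bar{i}\colon\im(i)\to A$; one sets $R\coloneqq\bar{i}\circ Q$, observes that $i\circ R=Q\in\mathcal{G}$ so that $R\in i^*\mathcal{G}$, and uses that the image of $Q\circ f=i\circ P|_{V_r}$ lies in $\im(i)$ to conclude $P|_{V_r}=\bar{i}\circ i\circ P|_{V_r}=R\circ f$. In other words, one does not look for a member of $\mathcal{G}$ landing in $\im(i)$ (the difficulty you flag); one transports $Q$ itself to $A$ by composing with $\bar{i}$. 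Strictly speaking, forming $\bar{i}\circ Q$ on all of $U_Q$ requires $\im(Q)\subset\im(i)$, whereas only $Q\circ f$ is known to land in $\im(i)$, so your worry does point at a real subtlety in the published argument (one can address it by restricting $Q$ to a suitable neighborhood of $f(V_r)$, at the cost of $\mathcal{G}$ not being closed under restriction); but the mechanism of composing with $\bar{i}$ is the missing idea, and without it, or some substitute for it, your proposal does not establish that $i^*\mathcal{G}$ generates $\D^A$.
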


\begin{proof}
We show that $\langle i^*\mathcal{G} \rangle$ coincides with $\D^A$. 
Since $\langle i^*\mathcal{G} \rangle \subset \D^A$ is obvious, 
it suffices to show that $\langle i^*\mathcal{G} \rangle \supset \D^A$.
For any $P \in \D^A$, since $i$ is an induction, we have $i \circ P \in \D^X$. 
Since $\mathcal{G}$ generates $\mathcal{D}^X$, 
it follows that for any $r \in U_P$, 
there exist an open neighborhood $V \subset U_P$ of $r$, 
an element $Q \colon U_Q \to X$ of $\mathcal{G}$, and a smooth map $f \colon V \to U_Q$ with $\circ P|_V = Q \circ f$.
Since $i$ is an induction, we have $(A, \D^A) \cong (\im(i), \D^{\im(i)}_{\text{sub}})$; 
see Proposition \ref{prop:induction}. 
Let $\bar{i} \colon \im(i) \to A$ be the inverse to $i$.
The fact that $i \circ \bar{i} \circ Q = Q \in \mathcal{G}$ enables us to deduce that $\bar{i} \circ Q\in i^*\mathcal{G}$.
Since the image of $Q\circ f=i \circ P|_V$ lies in $\im(i)$, 
we have
\[
 P|_V = \bar{i} \circ i \circ P|_V = \bar{i} \circ Q \circ f.
\]
This implies $P\in\langle i^*\mathcal{G}\rangle$.

The map $T_2(i)$ is smooth. 
Therefore, we see that  $g_A=g\circ T_2(i)$ is smooth. 
Thus, the symmetry and positivity of $g$ imply that  $g_A$ is a weak Riemannian metric.

To show that $g_A$ is definite with respect to $i^*\mathcal{G}$,
suppose $g_A(P)_r(v,v)=0$ for $P\in i^*\mathcal{G}$, $r\in U_P$ and $v\in T_rU_P$.
By definition we have
\[
 0=g_A(P)_r(v,v)=g(i\circ P)_r(v,v). 
\]
Since $i\circ P$ is in $\mathcal{G}$, it follows from the definiteness of $g$ that $v=0$.  
\end{proof}

\begin{rem}
We observe that the pullback \eqref{eq:pullback_generating_family} of $\mathcal{G}$ is smaller than that in the sense in \cite[1.75]{PIZ12}.
But \eqref{eq:pullback_generating_family} is enough to generate $\mathcal{D}^A$ when $i$ is an induction.
\end{rem}

\begin{defn}
Let $(X, g_X)$ and $(Y, g_Y)$ be weak Riemannian diffeological spaces, and  
$\varphi \colon X \to Y$ a smooth map. 
The map $\varphi$ is called an \emph{isometry} if $\varphi^*g_Y = g_X$.
\end{defn}

We denote by $\mathsf{RiemDiff}$ the category of weak Riemannian diffeological spaces and isometries.

\begin{prop} 
Let $\mathsf{Riem}C^\infty\mathsf{Mfd}$ be the category consisting of Riemannian manifolds modeled on locally convex spaces and isometries; 
see Section \ref{subsection:IDMfd}. 
Then $\mathsf{Riem}C^\infty\mathsf{Mfd}$ embeds in $\mathsf{RiemDiff}$.
\end{prop}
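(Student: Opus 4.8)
We want to show that the assignment $M \mapsto I(M)$ together with the metric produced by Proposition \ref{prop:IDMfd} defines a functor $\mathsf{Riem}C^\infty\mathsf{Mfd} \to \mathsf{RiemDiff}$ which is an embedding, i.e.\ fully faithful. On objects, Proposition \ref{prop:IDMfd} already gives the needed construction: to a Riemannian manifold $(M, g)$ in $C^\infty\mathsf{Mfd}$ we associate the weak Riemannian diffeological space $(I(M), I(g)\circ \Theta_M)$. So the content of the proof is (i) checking that an isometry $\varphi\colon (M, g_M) \to (N, g_N)$ in $\mathsf{Riem}C^\infty\mathsf{Mfd}$ induces an isometry $I(\varphi)\colon (I(M), I(g_M)\circ\Theta_M)\to (I(N), I(g_N)\circ\Theta_N)$ in $\mathsf{RiemDiff}$, so that we actually have a functor; (ii) checking that this functor is fully faithful, which given that $I$ itself is fully faithful (Kihara \cite[Lemma 2.5]{Kihara}) reduces to the statement that a smooth map $\varphi\colon M \to N$ is an isometry of manifolds if and only if $I(\varphi)$ is an isometry of weak Riemannian diffeological spaces.

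\textbf{Step 1: naturality of $\Theta$.} First I would record that the maps $\Theta_M\colon T_2(I(M)) \to I(TM\oplus TM)$ constructed in the proof of Proposition \ref{prop:IDMfd} are natural in $M$: for a smooth map $\varphi\colon M \to N$ the square relating $T_2(I(\varphi))$, $\Theta_M$, $\Theta_N$ and $I(T\varphi\oplus T\varphi)$ commutes. This follows from the universal property of the colimit defining $T_2(I(M))$: both composites $I(T\varphi\oplus T\varphi)\circ\Theta_M$ and $\Theta_N\circ T_2(I(\varphi))$ agree when precomposed with each canonical map $\pi_{\widehat{T}_2(U_P)}$, because in both cases one is just applying $I$ to the plot-level maps $(\varphi\circ P)_*$, and the colimit cocone is jointly epic.

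\textbf{Step 2: isometries go to isometries.} Using Step 1, for $\varphi$ an isometry of manifolds (so $(T\varphi\oplus T\varphi)^*g_N = g_M$, equivalently $g_N\circ(T\varphi\oplus T\varphi) = g_M$ on $TM\oplus TM$), I compute
\[
 I(\varphi)^*(I(g_N)\circ\Theta_N)
 = I(g_N)\circ\Theta_N\circ T_2(I(\varphi))
 = I(g_N)\circ I(T\varphi\oplus T\varphi)\circ\Theta_M
 = I(g_N\circ(T\varphi\oplus T\varphi))\circ\Theta_M
 = I(g_M)\circ\Theta_M,
\]
so $I(\varphi)$ is an isometry of weak Riemannian diffeological spaces. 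Thus $I$ restricts to a functor $\mathsf{Riem}C^\infty\mathsf{Mfd}\to\mathsf{RiemDiff}$.

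\textbf{Step 3: full faithfulness.} Since $I\colon C^\infty\mathsf{Mfd}\to\mathsf{Diff}$ is fully faithful, a morphism $(I(M), I(g_M)\circ\Theta_M) \to (I(N), I(g_N)\circ\Theta_N)$ in $\mathsf{RiemDiff}$ is the same as a smooth map $\varphi\colon M \to N$ in $C^\infty\mathsf{Mfd}$ with $I(\varphi)^*(I(g_N)\circ\Theta_N) = I(g_M)\circ\Theta_M$; I must show this metric condition is equivalent to $\varphi$ being an isometry of manifolds. By the computation in Step 2 (run in reverse), $I(\varphi)^*(I(g_N)\circ\Theta_N) = I(g_N\circ(T\varphi\oplus T\varphi))\circ\Theta_M$, so the condition reads $I(g_N\circ(T\varphi\oplus T\varphi))\circ\Theta_M = I(g_M)\circ\Theta_M$. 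The remaining point — and this is the step I expect to be the main obstacle — is that $\Theta_M$ is "surjective enough" that precomposition with it is injective on the relevant maps out of $I(TM\oplus TM)$; concretely, every element $(v_x, w_x)\in T_xM\oplus T_xM$ lies in the image of some $(\varphi_P', \varphi_P)$ for a plot $P$ (e.g.\ a $2$-dimensional plot through $x$ whose differential hits $v_x$ and $w_x$, using that $M$ is locally $c^\infty$-open in a locally convex space and that finite-dimensional subspaces are split). Once this is in hand, $I(g_N\circ(T\varphi\oplus T\varphi))\circ\Theta_M = I(g_M)\circ\Theta_M$ forces $g_N\circ(T\varphi\oplus T\varphi) = g_M$ pointwise on $TM\oplus TM$, i.e.\ $\varphi$ is an isometry. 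Combining Steps 2 and 3 gives that $I$ is a fully faithful functor $\mathsf{Riem}C^\infty\mathsf{Mfd}\hookrightarrow\mathsf{RiemDiff}$, hence an embedding of categories.
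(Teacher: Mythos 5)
Your proposal is correct and follows essentially the same route as the paper: the paper also reduces full faithfulness, via the naturality of $\Theta$ and the full faithfulness of $I$, to the surjectivity of $\Theta_M$, which it proves exactly as you sketch by taking the two-dimensional plot $P(s,t)=\phi^{-1}(su+tv)$ through a chart $\phi$ around $x$. Your Step 2 (isometries map to isometries) is slightly more explicit than the paper, which treats the well-definedness of the functor as immediate.
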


\begin{proof}
Let $(M, g)$ be a Riemannian manifold modeled on locally convex spaces.
By Proposition \ref{prop:IDMfd}, 
we regard $M$ as a weak Riemannian diffeological space. 
We then define a functor $R \colon \mathsf{Riem}C^\infty\mathsf{Mfd} \to \mathsf{RiemDiff}$ 
by sending a morphism $\varphi \colon (M, g_M) \to (N, g_N)$ to
$\varphi \colon (M, \D^M_{\mathrm{std}}, g_M) \to (N, \D^N_{\mathrm{std}}, g_N)$.
It is immediate that the functor gives rise to an injective map 
\[
 R_{M, N} \colon \Iso_{\mathsf{ \mathsf{Riem}C^\infty\mathsf{Mfd}}}(M, N) \to \Iso_{\mathsf{RiemDiff}}(R(M), R(N))
\]
for any objects $M$ and $N$ in $\mathsf{Riem}C^\infty\mathsf{Mfd}$.  
We show the surjectivity of the map $R_{M, N}$. 
Recall the fully faithful functor $I \colon C^\infty\mathsf{Mfd} \to \mathsf{Diff}$ and 
the smooth map $\Theta_{()}$ defined in the proof of Proposition \ref{prop:IDMfd}; 
see Section \ref{subsection:IDMfd}. 
For an isometry $\varphi \colon R(M) \to R(N)$, 
the naturality of the map $\Theta_{()}$ gives the diagram 
\begin{equation} 
 \xymatrix@C35pt@R20pt{
 T_2(M) \ar[r]^-{\Theta_M} \ar[d]_-{T_2(\varphi)}& TM\oplus TM \ar[r]^-{g_M} \ar[d]_-{T(\varphi)\oplus T(\varphi)} & \R \\
 T_2(N) \ar[r]_-{\Theta_N} & TN\oplus TN \ar[ru]_-{g_N} 
 }
\end{equation}
in which the trapezoid and the left square are commutative. 
We observe that $\varphi$ in $T(\varphi)$ is a smooth map from $M$ to $N$ in $C^\infty\mathsf{Mfd}$ with $I(\varphi)= \varphi$. 

Thus, in order to prove the fullness of the functor $R$, 
we show that the right triangle is commutative. 
To this end, it suffices to prove that $\Theta_M$ is surjective.

Let $\phi \colon U_x \to E_\phi$ be a chart around an element $x \in M$ with $\phi(x)=0$.
For an element $(x, u, v) \in T_xM\times T_xM \subset TM\oplus TM$, we define a plot $P \colon  \R^2  \to M$ by 
$P(s, t) = \phi^{-1}(su + tv)$.  
Then, we see that $(\Theta_M \circ  \pi_{\widetilde{T}_2(U_P)}) (0, \partial/\partial s, \partial/\partial t, ) = (x, u, v)$. 
This completes the proof. 
\end{proof}

\subsection{The Riemannian pseudo-distance}
Let $g \colon T_2(X) \to \R$ be a weak Riemannian metric on a diffeological space $X$. 
Then, we have a pseudo-distance $d$ on $X$ associated with the metric $g$ by applying the usual procedure, 
as in the case of a Riemannian manifold \cite{Mu}. 
In fact, 
the pseudo-distance $d \colon X\times X \to \R_{\geq 0}\cup\{\infty \}$ is defined by 
\begin{equation}\label{eq:d}
 d(x, y) = {\displaystyle\inf_{\gamma \in \text{Path}(X;x, y)} \ell(\gamma)},
 \quad  \text{where} \quad   
 \ell(\gamma) =\int_0^1 (g(\gamma)_t(1,1))^{\frac{1}{2}}dt
\end{equation}
and $d(x, y) =\infty$ if there is no smooth path connecting $x$ and $y$. 
Here  $\text{Path}(X;x, y)$ denotes the subset of $\mathrm{Path}(X)$ consisting of $\gamma$ with $\gamma(0) = x$ and $\gamma(1) =y$.

\begin{thm}\label{thm:D-top}
Let $d \colon X \times X \to \R_{\geq 0}$ be the pseudodistance on a connected diffeological space $X$ defined by a weak Riemannian metric 
$g \colon T_2(X) \to \R$. 
Then the $D$-topology of $X$ is finer than the topology $\mathcal{O}_d$ defined by $d$; 
that is, the D-topology contains $\mathcal{O}_d$. 
In particular, the function $d$ on $D(X)\times D(X)$ is continuous. 
\end{thm}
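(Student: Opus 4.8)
The plan is to show that every open ball $B_d(x_0, r) = \{ y \in X \mid d(x_0, y) < r \}$ is open in the $D$-topology of $X$; since such balls form a basis for $\mathcal{O}_d$, this yields that $\mathcal{O}_d$ is contained in the $D$-topology. By the description of the $D$-topology, it suffices to check that for every plot $P \colon U_P \to X$ the preimage $P^{-1}(B_d(x_0, r))$ is open in $U_P$. So fix a plot $P$, a point $a \in U_P$ with $P(a) \in B_d(x_0, r)$, and set $\e_0 \coloneqq r - d(x_0, P(a)) > 0$. I want to produce a Euclidean ball $B(a, \delta) \subset U_P$ on which $d(x_0, P(\cdot)) < r$, and for this it is enough to control $d(P(a), P(b))$ for $b$ near $a$ by the triangle inequality for the pseudodistance.

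The key estimate is the following \emph{local Lipschitz bound}: for $a \in U_P$ there is a convex neighborhood $B(a,\delta) \subset U_P$ and a constant $C > 0$ with $d(P(a), P(b)) \le C \, |a - b|$ for all $b \in B(a, \delta)$. To prove this, join $a$ to $b$ by the straight segment $\sigma_b(t) = a + t(b - a)$, $t \in [0,1]$, which lies in $B(a,\delta)$ by convexity, and consider the path $\gamma_b \coloneqq P \circ \widetilde{\sigma}_b$ where $\widetilde{\sigma}_b \colon \R \to U_P$ is a smooth reparametrization of $\sigma_b$ that is constant near the endpoints of $[0,1]$ (this is the standard device to make the concatenated path smooth; one may also simply note $P \circ \sigma_b$ is already a valid plot and estimate its length directly, as in the Riemannian case). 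Then $\gamma_b \in \mathrm{Path}(X; P(a), P(b))$, so
\[
 d(P(a), P(b)) \le \ell(\gamma_b) = \int_0^1 \bigl( g(P \circ \widetilde{\sigma}_b)_t(1,1) \bigr)^{1/2} dt = \int_0^1 \bigl( g(P)_{\widetilde{\sigma}_b(t)}(\widetilde{\sigma}_b'(t), \widetilde{\sigma}_b'(t)) \bigr)^{1/2} dt,
\]
using the chain rule together with the fact that $g(P)$ is the covariant $2$-tensor corresponding to $g$ via Proposition~\ref{prop:one-to-one} and that $g(P \circ f) = (\widehat{T}_2(f))^*(g(P))$ for smooth $f$. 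Now $g(P)$ is a smooth (hence continuous) symmetric positive $2$-tensor on $U_P$, so on the compact set $\overline{B(a,\delta/2)}$ the function $(y, w) \mapsto g(P)_y(w, w)$ is bounded by $C^2 |w|^2$ for some $C$; choosing $\delta$ small and $b \in B(a, \delta/2)$ so the whole segment stays in this compact set, and using $|\widetilde{\sigma}_b'(t)| \le K |b - a|$ for a reparametrization constant $K$ depending only on the fixed smoothing profile, the integral is bounded by $C K |b - a|$. This is exactly the claimed Lipschitz bound.

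With the bound in hand, the argument closes quickly: choose $\delta$ small enough that additionally $C K \delta < \e_0$; then for $b \in B(a, \delta)$ we get $d(x_0, P(b)) \le d(x_0, P(a)) + d(P(a), P(b)) < d(x_0, P(a)) + \e_0 = r$, so $B(a, \delta) \subset P^{-1}(B_d(x_0, r))$. Hence $P^{-1}(B_d(x_0, r))$ is open in $U_P$ for every plot $P$, so $B_d(x_0, r)$ is $D$-open, and therefore $\mathcal{O}_d \subset \mathcal{O}_{D(X)}$. The final assertion that $d \colon D(X) \times D(X) \to \R_{\ge 0}$ is continuous is then formal: continuity of a pseudometric with respect to a topology is equivalent to all its open balls being open in that topology (equivalently, $\mathcal{O}_d$ being coarser), which is precisely what we have shown; alternatively one runs the same Lipschitz estimate in both variables simultaneously on $D(X) \times D(X) \cong D(X \times X)$ using plots of the product.

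The main obstacle is the reparametrization bookkeeping needed to turn the straight Euclidean segment into an admissible element of $\mathrm{Path}(X) = C^\infty(\R, X)$ while keeping the length estimate linear in $|a - b|$: one must fix once and for all a smooth monotone profile $[0,1] \to [0,1]$, constant near the endpoints, and track that the derivative factor it introduces is a universal constant independent of $a$ and $b$. Everything else — compactness of a closed sub-ball, continuity of $g(P)$ on a Euclidean domain, the triangle inequality for the pseudodistance — is standard, and the passage from "open balls are $D$-open" to "$\mathcal{O}_d$ is contained in the $D$-topology" is immediate since the $d$-balls form a basis.
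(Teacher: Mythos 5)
Your proposal is correct and follows essentially the same route as the paper: the paper also reduces to showing that preimages of $d$-balls under plots are open, via the upper bound $d_P \le k\,d_E$ on a small ball (Lemma~\ref{lem:d_E}, from continuity of $g(P)$) together with the observation that pushing a path in $U_P$ forward along $P$ does not increase length (Claim~\ref{claim:1}) --- which is exactly your local Lipschitz estimate obtained from reparametrized straight segments. The only cosmetic difference is that the paper phrases the openness check through the quotient map $\pi\colon\coprod_{P\in\D}U_P\to D(X)$ rather than plot by plot.
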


In order to prove the theorem, we need a lemma.

\begin{lem}\label{lem:d_E} 
Let $d_E$ be the Euclidean metric on $U_P$ and $x \in U_P\cap V$ for some domain $V \subset \R^{\dim U_P}$. 
Then there exists a real number $k >0$ and an open ball $B$ of $U_P\cap V$ centered at $x$ such that 
$\overline{B} \subset U_P\cap V$ and $d_P \leq kd_E$ on $B\times B$, 
where $d_P$ is the distance defined by the symmetric, positive covariant $2$-tensor $g(P)$.
Moreover, 
if the $2$-tensor $g(P)$ is definite, 
then, 
one has $\frac{1}{k} d_E \leq d_P \leq kd_E$ on $B\times B$ for some open ball $B$ of $U_P$ with 
$x\in B \subset \overline{B} \subset U_P\cap V$ and $k >0$. 
\end{lem}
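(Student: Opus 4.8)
The statement is a local comparison of two metrics on a Euclidean domain, so the proof is essentially a compactness argument applied to the continuous functions $(r,v)\mapsto g(P)_r(v,v)$ on (a neighbourhood of) the point $x$. First I would fix a closed ball $\overline{B_0}\subset U_P\cap V$ centred at $x$; this is possible since $U_P\cap V$ is open in $\R^{\dim U_P}$. On the unit sphere bundle $\overline{B_0}\times S^{n-1}$ (with $n=\dim U_P$), which is compact, the function $(r,v)\mapsto g(P)_r(v,v)$ is continuous and everywhere $\geq 0$ by positivity of the $2$-tensor, hence attains a finite maximum $C^2$. By homogeneity, $g(P)_r(v,v)\le C^2|v|^2$ for all $r\in\overline{B_0}$ and all $v$.

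The next step is to turn this pointwise bound on the quadratic form into the distance inequality $d_P\le k\, d_E$ on $B\times B$ for a possibly smaller ball $B$. Given $y,z\in B$, the straight segment $t\mapsto (1-t)y+tz$ stays in the convex set $B\subset\overline{B_0}$, has Euclidean length $|y-z|=d_E(y,z)$, and its $g(P)$-length is $\int_0^1 (g(P)_{\gamma(t)}(\dot\gamma,\dot\gamma))^{1/2}\,dt\le C|y-z|$; taking the infimum over paths from $y$ to $z$ in $B$ gives $d_P(y,z)\le C\, d_E(y,z)$. Here one must be mild about the definition of $d_P$: it should be the infimum of $g(P)$-lengths of paths, so that using the straight segment is legitimate. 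Set $k$ to be any real exceeding $C$ (and, in the second part, also exceeding the reciprocal lower constant) and shrink so that $\overline B\subset U_P\cap V$.

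For the second assertion, assume $g(P)$ is definite. Then on the compact sphere bundle $\overline{B_0}\times S^{n-1}$ the continuous function $(r,v)\mapsto g(P)_r(v,v)$ is now strictly positive, hence attains a positive minimum $c^2>0$, giving $g(P)_r(v,v)\ge c^2|v|^2$ on $\overline{B_0}$. This yields the reverse Euclidean estimate for the $g(P)$-length of \emph{any} path lying in $\overline{B_0}$: such a path has $g(P)$-length $\ge c\times(\text{its Euclidean length})\ge c\, d_E(\text{endpoints})$. The only subtlety is that a competitor path between two points of $B$ in the definition of $d_P$ need not remain in $\overline{B_0}$; to handle this, choose $B$ small enough (radius less than, say, $\tfrac{c}{4C}$ times the radius of $B_0$, or simply less than half of it combined with the upper bound) that any path leaving $\overline{B_0}$ already has Euclidean length, hence $g(P)$-length $\ge c\cdot(\text{dist to }\partial B_0)$, bigger than $C\cdot\mathrm{diam}(B)\ge d_P$ between points of $B$; so such paths are never competitive and $d_P$ on $B\times B$ is realised (up to $\e$) by paths inside $\overline{B_0}$. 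Then $d_P\ge c\, d_E$ on $B\times B$, and choosing $k\ge\max\{C,1/c\}$ gives $\tfrac1k d_E\le d_P\le k\, d_E$.

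The genuinely delicate step is the last one: confining competitor paths to $\overline{B_0}$ so that the lower bound on the quadratic form can be applied. The clean way is to first record both the upper constant $C$ (from the first part, valid on all of $\overline{B_0}$) and the lower constant $c$, then pick the final ball $B$ with radius $\rho$ so small that $c\,(\mathrm{dist}(x,\partial B_0)-\rho) > 2C\rho$; any path between two points of $B$ that touches $\partial B_0$ has Euclidean length $\ge \mathrm{dist}(x,\partial B_0)-\rho$, hence $g(P)$-length $\ge c(\mathrm{dist}(x,\partial B_0)-\rho) > 2C\rho \ge C\,d_E\ge d_P(\text{those two points})$, so it cannot be within $d_P$ of the infimum. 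Everything else is routine estimation of path integrals.
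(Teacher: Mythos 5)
Your argument is correct and is essentially the same as the paper's, which simply defers to the proof of \cite[Theorem 4.1.8]{Mu}: continuity of $g(P)$ on a compact sphere bundle gives the upper constant, and definiteness gives the lower one. Your writeup fills in the details the paper leaves to the reference, including the genuinely delicate step of confining competitor paths to $\overline{B_0}$ before applying the lower bound, and that step is handled correctly.
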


\begin{proof}
This follows from the proof of \cite[Theorem 4.1.8]{Mu}. 
The upper bound is given by the continuity of $g(P)$. 
The definiteness of $g(P)$ yields the lower bound. 
\end{proof}

\begin{proof}[Proof of Theorem \ref{thm:D-top}] 
Let $\pi \colon  \coprod_{P\in \D}U_P \to X$ be the subduction. 
The argument in Remark \ref{rem:The_quotient_map} yields that 
$\pi \colon  \coprod_{P\in \D}U_P \to D(X)$ is the quotient map. 
We show that, for $V\in\mathcal{O}_d$, 
the set $\pi^{-1}(V)\cap U_P$ is open in $U_P$ with the usual topology for each $P \in \D$. 
Let $x$ be in $\pi^{-1}(V)\cap U_P$. 
Since $V\in\mathcal{O}_d$, we have a $\delta$-neighborhood $U_d(P(x); \delta)$ of $P(x)$ which is contained in $V$.  
Lemma \ref{lem:d_E} yields that 
$d_P \leq kd_E$ on $B\times B$ 
for some open ball $B$ of $U_P$ centered at $x$ with $\overline{B} \subset U_P$ and $k >0$.

Let $\e$ be a positive number less than $\min\{\delta, \text{the radius of} \ B\}$. 
Then, we see that $U_{d_E}(x; \frac{\e}{k}) \subset U_{d_P}(x; \e)\subset B$. 
The following Claim~\ref{claim:1} allows one to conclude that 
\[
 \pi(U_{d_E}(x; \frac{\e}{k})) \subset \pi(U_{d_P}(x; \e)) \subset  U_d(P(x); \e) \subset U_d(P(x); \delta) \subset V. 
\]
This completes the proof. 
\end{proof}

\begin{claim}\label{claim:1}
$\pi( U_{d_P}(x; \e)) \subset  U_d(P(x); \e)$.
\end{claim}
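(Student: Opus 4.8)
The plan is to take an arbitrary point in $\pi(U_{d_P}(x;\e))$, realize it as $P(y)$ for some $y \in U_{d_P}(x;\e) \subset U_P$, and produce a smooth path in $X$ from $P(x)$ to $P(y)$ whose length is controlled by $d_P(x,y) < \e$. Since $y$ lies in the open ball $B \subset U_P$ (recall $U_{d_P}(x;\e) \subset B$ from the preceding argument, with $\overline{B} \subset U_P$), and $d_P$ is the length metric on $U_P$ associated to the symmetric positive $2$-tensor $g(P)$, there is, for every $\eta > 0$, a smooth path $c \colon [0,1] \to U_P$ from $x$ to $y$ whose $g(P)$-length $\ell_{g(P)}(c)$ is less than $d_P(x,y) + \eta$, and for $\eta$ small enough this is still less than $\e$. (One should note that the infimum defining $d_P$ is over piecewise-smooth paths; a standard smoothing argument, or restricting $\eta$ and reparametrizing near the corners, lets us take $c$ genuinely smooth. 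Because $\overline{B}$ is compact and contained in $U_P$, $c$ stays in a region where $g(P)$ is uniformly comparable to the Euclidean tensor, so the length integral is finite and the smoothing does not blow up the length.)

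Next I would push this path forward: set $\gamma \coloneqq P \circ c \colon [0,1] \to X$. Since $P$ is a plot and $c$ is smooth, $\gamma \in \mathrm{Path}(X)$, and $\gamma(0) = P(x)$, $\gamma(1) = P(y)$, so $\gamma \in \mathrm{Path}(X; P(x), P(y))$. The key computation is that the $g$-length of $\gamma$ equals the $g(P)$-length of $c$: by the compatibility built into the definition of a weak Riemannian metric, $g(\gamma)_t(1,1) = g(P\circ c)_t(1,1) = g(P)_{c(t)}\bigl(\tfrac{dc}{dt}(t), \tfrac{dc}{dt}(t)\bigr)$ (this is exactly the naturality used in the proof of Proposition~\ref{prop:one-to-one} and Proposition~\ref{prop:IZ-Ours}), hence
\[
 \ell(\gamma) = \int_0^1 \bigl(g(\gamma)_t(1,1)\bigr)^{1/2}\,dt
 = \int_0^1 \bigl(g(P)_{c(t)}(\dot c(t),\dot c(t))\bigr)^{1/2}\,dt
 = \ell_{g(P)}(c) < \e.
\]
Therefore $d(P(x), P(y)) \le \ell(\gamma) < \e$, which says precisely $P(y) \in U_d(P(x);\e)$. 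Since $P(y)$ was an arbitrary element of $\pi(U_{d_P}(x;\e))$, this gives the desired inclusion.

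The main obstacle is the smoothness issue in choosing the path $c$: the length pseudometric $d_P$ on a Euclidean domain is classically realized by piecewise-$C^1$ curves, whereas $\mathrm{Path}(X) = C^\infty(\R, X)$ demands an honestly smooth curve. I expect this to be handled by a routine reparametrization-and-smoothing argument (slow the curve to zero velocity near each corner, which only decreases — or at worst leaves unchanged — the integrand after smoothing, all inside the compact set $\overline{B}$ where $g(P)$ is bounded), and one should also extend $c$ from $[0,1]$ to all of $\R$ by making it constant outside $[0,1]$, again with vanishing derivatives at the endpoints, so that $\gamma$ is a genuine element of $\mathrm{Path}(X)$. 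No new geometric input is needed beyond Lemma~\ref{lem:d_E} and the definitional naturality of $g$.
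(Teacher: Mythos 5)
Your proposal is correct and follows essentially the same route as the paper: pick a smooth path in $U_P$ realizing $d_P(x,q)$ up to $\e$, push it forward by $P=\pi|_{U_P}$, and observe that $g(P\circ\gamma)_t(1,1)=g(P)_{\gamma(t)}(\gamma',\gamma')$ so the lengths agree, whence $d(P(x),P(q))<\e$. The smoothing/corner issue you flag does not arise in the paper because $d_P$ there is defined directly as an infimum over $\mathrm{Path}(U_P;x,q)\subset C^\infty(\R,U_P)$, i.e.\ over genuinely smooth paths, so a smooth near-minimizer exists by definition.
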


\begin{proof}
Let $q$ be in $U_{d_P}(x; \e)$. 
By definition, we see that on $U_P$, 
\[
 d_P(x, q) = \displaystyle\inf_{\gamma \in \text{Path}(U_P;x, q)}\ell_P(\gamma),
 \quad \text{where} \quad
 \ell_P(\gamma)=\int_0^1 (g(P)_{\gamma(t)}(\gamma', \gamma'))^{\frac{1}{2}}dt.
\] 
Since $d_P(x, q) < \e$, 
it follows that  there exists a path $\gamma\in\text{Path}(U_P;x, q)$ such that 
$d_P(x, q)  \leq \ell_P(\gamma) \eqqcolon \widetilde{\delta}  < \e$. 
Define a path $\widetilde{\gamma}$ by $\widetilde{\gamma}\coloneqq P\circ \gamma= \pi\circ \gamma$. 
Then, we see that
$g(\widetilde{\gamma})_t(1,1) = g(P\circ \gamma)_t(1,1)= \gamma^*(g(P))_t(1,1) = g(P)_{\gamma(t)}(\gamma',\gamma')$. 
This implies that 
$\ell(\widetilde{\gamma}) = \widetilde{\delta}$ and then 
$d(P(x), P(q)) \leq \ell(\widetilde{\gamma}) = \widetilde{\delta} < \e$;
see (\ref{eq:d}). 
It turns out that $\pi(q)$ is in $U_d(P(x); \e)$.
\end{proof}

\begin{rem}\label{rem:D-top}
In the proof of Theorem \ref{thm:D-top},
we do \textit{not} need the definiteness of the weak Riemannian metric $g$.  
We do not clarify when the topology $\mathcal{O}_d$ induced by a weak Riemannian metric $g$ 
on a diffeological space $X$ contains the $D$-topology on $X$. 
Suppose that the metric $g$ is definite and a generating family $\mathcal{G}$ gives the definiteness. 
Let $\mathcal{O}_{d_P}$ be the topology of $U_P$ defined by the metric $g(P)$.  
Observe that $\mathcal{O}_{d_P}$ is the usual Euclidean topology.
Then, the latter half of Lemma \ref{lem:d_E} yields that $P^{-1}(O)\cap U_P\in\mathcal{O}_{d_P}$ 
for every $D$-open subset $O\subset X$ and every plot $P\in\mathcal{G}$.
\end{rem}

\begin{defn}\label{defn:points} 
Let $X$ be a diffeological space. 
A generating family $\mathcal{G}$ of a diffeology of $X$ \textit{separates points} if 
for distinct points $p$ and $q$ in $X$,  
there exist a plot $P \in  \mathcal{G}$ and an open ball $B$ with center $x$ such that $P(x) = p$, 
$B \subset \overline{B} \subset U_P$ and each smooth path $\gamma$ from $p$ to $q$ admits a local lift $\widetilde{\gamma}$ 
in $U_P$ with $\widetilde{\gamma}(0) =x$ and $\text{Im} \ \! \widetilde{\gamma} \not\subset B$. 
\end{defn}

\begin{rem}\label{rem:a_manifold}
Both generating families $\mathcal{G}_{\text{atlas}}$ and $\mathcal{G}_{\text{imm}}$ of a manifold $M$ separate points. 
\end{rem}

With the technical condition above, we have the following result.

\begin{thm}\label{thm:condition_for_g_to_be_definite}
The pseudodistance $d$ defined by a weak Riemannian metric $g$ is indeed a distance 
provided $g$ is definite with respect to  a generating family $\mathcal{G}$ which separates points. 
\end{thm}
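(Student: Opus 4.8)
The plan is to upgrade the pseudodistance $d$ to a genuine distance by proving $d(p,q)>0$ whenever $p\neq q$; since $d$ is already a pseudodistance, this is the only point that is not automatic. If $\mathrm{Path}(X;p,q)=\emptyset$, then $d(p,q)=\infty>0$ by \eqref{eq:d}, so fix distinct points $p,q$ joined by a smooth path. First I would apply the separation hypothesis to $\mathcal{G}$ to obtain a plot $P\in\mathcal{G}$, a point $x\in U_P$ with $P(x)=p$, and an open ball $B_0$ centered at $x$ with $\overline{B_0}\subset U_P$ such that every smooth path $\gamma$ from $p$ to $q$ admits a local lift $\widetilde{\gamma}$ in $U_P$ with $\widetilde{\gamma}(0)=x$ and $\im\widetilde{\gamma}\not\subset B_0$. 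Since $P\in\mathcal{G}$ and $g$ is definite with respect to $\mathcal{G}$, the $2$-tensor $g(P)$ is definite, so Lemma \ref{lem:d_E} (its second part) furnishes a constant $k>0$ and an open ball $B_1$ with $x\in B_1$ and $\overline{B_1}\subset U_P$ on which $k^{-1}d_E\leq d_P\leq k\,d_E$, where $d_E$ and $d_P$ are the distances on $U_P$ induced by the Euclidean metric and by $g(P)$. Finally choose an open ball $B$ centered at $x$ of radius $\rho>0$ with $\overline{B}\subset B_0\cap B_1$; then $k^{-1}d_E\leq d_P$ still holds whenever both arguments lie in $\overline{B}$, and, as $B\subset B_0$, any lift $\widetilde{\gamma}$ as above still satisfies $\im\widetilde{\gamma}\not\subset B$.

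Next I would bound $\ell(\gamma)$ from below, uniformly in $\gamma$. Fix a smooth path $\gamma$ from $p$ to $q$ together with its lift $\widetilde{\gamma}$. Since $\widetilde{\gamma}(0)=x$ is the center of the open ball $B$ while $\im\widetilde{\gamma}\not\subset B$, there is a first parameter $t_1\in(0,1]$ with $\widetilde{\gamma}(t_1)\in\partial B$, so that $\widetilde{\gamma}([0,t_1])\subset\overline{B}\subset B_1$ and $d_E(x,\widetilde{\gamma}(t_1))=\rho$. Exactly as in the computation in Claim \ref{claim:1}, the identity $P\circ\widetilde{\gamma}=\gamma$ on $[0,t_1]$ gives $g(\gamma)_t(1,1)=g(P)_{\widetilde{\gamma}(t)}(\widetilde{\gamma}',\widetilde{\gamma}')$ there, whence
\[
 \ell(\gamma)\;\geq\;\int_0^{t_1}\bigl(g(\gamma)_t(1,1)\bigr)^{1/2}\,dt
 \;=\;\int_0^{t_1}\bigl(g(P)_{\widetilde{\gamma}(t)}(\widetilde{\gamma}',\widetilde{\gamma}')\bigr)^{1/2}\,dt
 \;\geq\;d_P\bigl(x,\widetilde{\gamma}(t_1)\bigr)
 \;\geq\;\frac{1}{k}\,d_E\bigl(x,\widetilde{\gamma}(t_1)\bigr)\;=\;\frac{\rho}{k},
\]
where the middle inequality holds because the truncated path joins $x$ to $\widetilde{\gamma}(t_1)$ inside $U_P$ and the last one is Lemma \ref{lem:d_E}. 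As $\rho/k>0$ does not depend on $\gamma$, taking the infimum over $\mathrm{Path}(X;p,q)$ yields $d(p,q)\geq\rho/k>0$.

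The step I expect to be the main obstacle is the bookkeeping linking the two balls and the local lift. One must verify that the ball on which Lemma \ref{lem:d_E} provides the two-sided comparison can be nested inside the ball produced by Definition \ref{defn:points} — this is the reason for passing to the concentric ball $B$ — and, more delicately, that the lift $\widetilde{\gamma}$, which a priori is only defined on a subinterval of the domain of $\gamma$, genuinely leaves $B$ at a parameter $t_1\leq 1$, so that truncating the integral defining $\ell(\gamma)$ at $t_1$ is legitimate; this is exactly the feature of the separation condition that is being used. Granting these points, the quantitative core is simply the elementary length comparison already packaged in Lemma \ref{lem:d_E} and Claim \ref{claim:1}.
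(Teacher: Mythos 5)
Your proposal is correct and follows essentially the same route as the paper: invoke the separation condition to get a plot $P\in\mathcal{G}$ and a ball that every lift must exit, use the definiteness of $g(P)$ on a compact closed ball to get the two-sided comparison with the Euclidean metric (the paper derives the pointwise tensor estimate \eqref{eq:vectors} directly from Mukherjee's argument rather than citing Lemma \ref{lem:d_E}, but this is the same estimate), and bound $\ell(\gamma)$ below by $\rho/k$ uniformly in $\gamma$. The only cosmetic differences are that you argue directly for $d(p,q)>0$ instead of by contradiction and that you nest a third ball inside the intersection of the separation ball and the comparison ball, which is a harmless refinement.
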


\begin{proof}
The proof is verbatim the same as that of \cite[Theorem 4.1.6]{Mu} by replacing the use of a chart with that of a plot.  
Suppose that 
$d(p, q) = 0$ for distinct points $p$ and $q$. We choose a plot $P \colon U_P \to X$ in $\mathcal{G}$ 
and an open ball $B$ with center $x$ which satisfy the condition in Definition \ref{defn:points}. 
The proof of \cite[Theorem 4.1.6]{Mu} allows us to deduce that there exists a positive number $k$ such that 
\begin{equation}\label{eq:vectors}
 \frac{1}{k} ||v|| \leq (g(P)_a(v, v))^{\frac{1}{2}} \leq k||v||
\end{equation}
for $(a, v) \in \overline{B}\times \R^{\dim U_P}$. 
Let $\gamma$ be a smooth path from $p$ to $q$ and 
$\widetilde{\gamma}$ a local lift in $U_P$ stating from a point $x$ with $P(x) =p$. 
Then there exists the smallest number $s\in(0,1]$ satisfying $u\coloneqq\gamma(s)\in\gamma([0,1])\cap\partial B$.
Let $\gamma_1\coloneqq\gamma|_{[0,s]}$.
Observe that $s>0$ and 
$\int_0^s (g(P)_{\widetilde{\gamma}_1(t)}(\widetilde{\gamma}_1',\widetilde{\gamma}_1'))^{\frac{1}{2}}dt \neq 0$. 
In fact, if the integration is equal to zero, then the definiteness condition allows us to conclude that 
$\widetilde{\gamma}_1$ is constant. 
However, $\widetilde{\gamma}_1(0) =x \neq u = \widetilde{\gamma}_1(s)$. 
With the radius $r$ of $B$, we have 
\[
 \ell(\gamma) = \int_0^1 (g(\gamma)_t(1, 1))^{\frac{1}{2}}dt
 \geq
 \int_0^s (g(P)_{\widetilde{\gamma}_1(t)}(\widetilde{\gamma}_1', \widetilde{\gamma}_1'))^{\frac{1}{2}}dt 
 \geq 
 \frac{1}{k}r.
\]
The last inequality follows from (\ref{eq:vectors}) and a change of variables in the integration. 
Then, we see that 
$d(p, q) = \inf \ell(\gamma) \geq \frac{1}{k}r$, 
which is a contradiction. 
\end{proof}

\begin{rem}
If $\pi\colon\coprod_{P\in\mathcal{G}}U_P\to X$ is a \emph{local subduction} 
(Example~\ref{ex:manifolds_subsets_products} (\ref{item:final_diffeology}), see also \cite[2.16]{PIZ12}), 
then we may replace the condition ``a generating family $\mathcal{G}$ separates points'' with a simpler one;
``for any distinct $p,q\in X$, there exists a plot $P\in \mathcal{G}$ and an open ball $B\subset U_P$ with center $x$ 
such that $P(x)=p$, $B\subset\overline{B}\subset U_P$ and $q\not\in P(B)$.''
Indeed, for any distinct $p$ and $q$, the following two conditions are equivalent;
\begin{enumerate}[label=(\arabic*),leftmargin=*]
\item
	for any $P\in\mathcal{G}$ with $p\in P(U_P)$ and any open ball $B$ centered at $x$ ($P(x)=p$), we have $q\in P(B)$
\item
	$d(p,q)=0$
\end{enumerate}
It is easy to see that (1) implies (2).
Suppose (1) does not hold and let $P\in\mathcal{G}$ be a plot that does not satisfy (1).
For any smooth path $\gamma$ from $p$ to $q$, 
we can find a local lift $\widetilde{\gamma}$ of $\gamma$ on $U_P$ defined near $0$.
Then, we can prove that $\ell(\gamma)>r/k$ ($r$ is the radius of $B$, $q\not\in P(B)$) 
as in the proof of Theorem~\ref{thm:condition_for_g_to_be_definite}.
\end{rem}

\section{A diffeological adjunction space}\label{sect:Adjunction_spaces}
We introduce an appropriate setting to construct a weak Riemannian diffeological space by attaching two such spaces.  
As seen in Example \ref{ex:theFirstExample}, the construction of the metric is applicable to the spaces of smooth maps.

We consider the diffeological adjunction space 
obtained by two inductions $i$ and $j$ into weak Riemannian diffeological spaces 
\begin{equation}\label{eq:embeddings}
 \xymatrix@C20pt@R15pt{
 X & A \ar[l]_-{i} \ar[r]^-{j}& Y. 
 }
\end{equation}
We assume further  that 
\begin{itemize}
\item[(I)]
	weak Riemannian metrics $g_X \colon T_2(X) \to \R$ and $g_Y \colon T_2(Y)\to \R$ on $X$ and $Y$ satisfy the condition that 
	$i^*g_X = j^*g_Y \colon T_2(A) \to \R$.
\end{itemize}
Then, we will construct a weak Riemannian metric 
$g \colon T_2(X\coprod_A Y) \to \R$, 
where $Z\coloneqq X\coprod_A Y$ is endowed with the quotient diffeology $\D^Z$.

Let $f \colon P \to Q$ be a morphism in $\D^Z$. 
Then, for $s \in U_P$,  we have a commutative diagram
\begin{equation}\label{eq:pushouts}
 \xymatrix@C20pt@R10pt{
 W' \ar[rr]^{f|_{W'}} \ar@{^{(}->}[d] \ar@/_40pt/[dddr]_(0.3){P_{XY}} & & W \ar@{_{(}->}[d] \ar@/^40pt/[dddl]^(0.3){Q_{XY}}\\
 U_P \ar[rr]^f \ar[rd]_P & &  U_Q \ar[dl]^Q \\ 
 & Z & \\
 & X\coprod Y \ar[u]_\pi&
}
\end{equation}
here open neighborhoods $W'$ and $W$ of $s$ and $f(s) \eqqcolon r$ respectively are taken 
so that $P_{XY}$ and $Q_{XY}$ are plots on $X$ \emph{or} $Y$. 
In fact, 
plots $P$ and $Q$ are in the quotient diffeology $\D^Z$ and then the local lifting condition of the plots gives the diagram. 
We define $g_{U_P} \colon T_2(U_P) \to \R$ by
$g_{U_P}((s, v_1, v_2)) = g(P_{XY})((s, v_1, v_2))$; see Definition \ref{defn:WR} for the notation. 
Observe that 
$g_{U_P}((s, v_1, v_2))=g_X([s, v_1, v_2])$ if $P_{XY}(W') \subset X$; see the diagram (\ref{eq:g(P)}).

Suppose that $P_{XY}$ and $Q_{XY}$ are plots $P_X$ on $X$ and $Q_Y$ on $Y$, respectively. 
Then, we see that 
$\text{Im} \ \!P_X  \subset i(A)$ and $\text{Im} \ \!(Q_Y\circ f) \subset j(A)$.  
Observe that an induction gives a diffeomorphism between the domain and its image; see Proposition \ref{prop:induction}. 
Thus
$P_A\coloneqq i^{-1}\circ P_X$ and $\rho\coloneqq j^{-1}\circ Q_Y\circ f$ are defined and 
it follows that 
\begin{equation}\label{eq:P--g}
P_A=\rho. 
\end{equation}

For a plot $P \colon U_P \to Y$, we may write $P_* \colon T_2(U_P) \to T_2(Y)$ for the canonical inclusion
\begin{equation}
 \{P\}\times T_2(U_P) \to T_2(Y)=\colim_{P \in \D_Y}(U_P\times\R^{\dim U_P}\times\R^{\dim U_P})
\end{equation}
for a weak Riemannian diffeological space $Y$.

\begin{lem}\label{lem:well_definedness}
With the notation above, suppose that $f_* \colon T_2(U_P) \to T_2(U_{Q})$ assigns $(r, u_1, u_2)$ to $(s, v_1, v_2)$. 
Then, one has 
$g_{U_P}((s, v_1, v_2)) = g_{U_Q}((r, u_1, u_2))$.  
\end{lem}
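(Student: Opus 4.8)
The plan is to argue by a case analysis according to whether the local lifts $P_{XY}\colon W'\to X\coprod Y$ (chosen near $s$) and $Q_{XY}\colon W\to X\coprod Y$ (chosen near $r=f(s)$) take their values in the summand $X$ or in the summand $Y$. Before the case split I would record two facts used throughout. First, since $i$ and $j$ are inductions they are injective, and the only identifications made in forming $Z=X\coprod_A Y$ are $i(a)\sim j(a)$; hence the fibres of $\pi\colon X\coprod Y\to Z$ are the singletons $\{x\}$ with $x\notin i(A)$, the singletons $\{y\}$ with $y\notin j(A)$, and the pairs $\{i(a),j(a)\}$, so in particular $\pi|_X\colon X\to Z$ and $\pi|_Y\colon Y\to Z$ are injective. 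Second, since $g_X$ and $g_Y$ are weak Riemannian metrics, the families $\{g_X(P)\}$ and $\{g_Y(P)\}$ are natural transformations in the sense of \eqref{eq:R-metric_g}, so $g_X(R\circ h)=h^{*}(g_X(R))$ for any plot $R$ of $X$ and any smooth map $h$, and similarly for $Y$; evaluating such an identity at $(s,v_1,v_2)$ is exactly the statement that the corresponding $2$-tensor is transported along $f_{*}$ to $(r,u_1,u_2)$.

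In the case where $P_{XY}$ and $Q_{XY}$ both take values in $X$ (the case where they both land in $Y$ being verbatim the same with $g_Y$ in place of $g_X$), I would first observe that $\pi\circ(Q_{XY}\circ f|_{W'})=Q\circ f|_{W'}=P|_{W'}=\pi\circ P_{XY}$ by \eqref{eq:pushouts}, so that injectivity of $\pi|_X$ forces $Q_{XY}\circ f|_{W'}=P_{XY}$ as maps into $X$. Then, using the definition of $g_{U_P}$, naturality of $g_X$, and the hypothesis $f_{*}(s,v_1,v_2)=(r,u_1,u_2)$,
\begin{align*}
 g_{U_P}((s,v_1,v_2))
 &=g(P_{XY})((s,v_1,v_2))=g\bigl(Q_{XY}\circ f|_{W'}\bigr)((s,v_1,v_2))\\
 &=g(Q_{XY})((r,u_1,u_2))=g_{U_Q}((r,u_1,u_2)).
\end{align*}

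For the mixed case, say $P_{XY}=P_X$ takes values in $X$ and $Q_{XY}=Q_Y$ in $Y$ (the opposite mixed case being symmetric), I would use the fibre description above: the equality $\pi\circ P_X=\pi\circ(Q_Y\circ f|_{W'})$ forces $\im P_X\subset i(A)$ and $\im(Q_Y\circ f|_{W'})\subset j(A)$, so that $P_A\coloneqq i^{-1}\circ P_X$ and $\rho\coloneqq j^{-1}\circ Q_Y\circ f|_{W'}$ are defined and $P_A=\rho$ by \eqref{eq:P--g}. Chaining the definitions, the compatibility hypothesis (I), and naturality of $g_Y$ then gives
\begin{align*}
 g_{U_P}((s,v_1,v_2))
 &=g(P_X)((s,v_1,v_2))=(i^{*}g_X)(P_A)((s,v_1,v_2))=(j^{*}g_Y)(P_A)((s,v_1,v_2))\\
 &=g_Y(j\circ\rho)((s,v_1,v_2))=g\bigl(Q_Y\circ f|_{W'}\bigr)((s,v_1,v_2))=g(Q_Y)((r,u_1,u_2))=g_{U_Q}((r,u_1,u_2)).
\end{align*}
The case $f=\id_{U_P}$, $P=Q$ of the lemma moreover shows that $g_{U_P}$ does not depend on the choices of local lifts, hence is a well-defined map $T_2(U_P)\to\R$.

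I expect the only real difficulty to lie in the bookkeeping of the mixed case: one must invoke hypothesis (I) at exactly the step passing from $i^{*}g_X(P_A)$ to $j^{*}g_Y(P_A)$, track the restriction of $f$ to $W'$ carefully, and first justify $\im P_X\subset i(A)$ — though the latter is precisely what the fibre description of $\pi$ (equivalently, the discussion around \eqref{eq:P--g} preceding the lemma) provides. Once the case distinction is in place, each case is a short diagram chase relying only on naturality of $g_X$, $g_Y$ and hypothesis (I).
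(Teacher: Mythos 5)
Your proof is correct and follows essentially the same route as the paper: in the mixed case you chase the same chain of equalities through $P_A=\rho$, hypothesis (I) in the form $i^*g_X=j^*g_Y$, and naturality of the $2$-tensors under $f_*$, which is exactly the paper's diagram (\ref{eq:g_g}). The only difference is that you also treat the same-summand cases and the independence of the choice of lift inside the lemma, whereas the paper defers those to the proof of Theorem~\ref{thm:adjunction_g} with the remark that the lemma's argument carries over verbatim.
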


\begin{proof}
By the equalities in  (\ref{eq:P--g}), we have the commutative diagram of diffeological spaces and smooth maps
\begin{equation}\label{eq:g_g}
 \xymatrix@C40pt@R20pt{
 T_2(W') \ar@/_10pt/[rd]_{((P_X)|_{W'})_*} \ar@/^15pt/[rrr]^(0.5){(f|_{W'})_*} \ar[r]_-{(\rho)_* = (P_A)_*} &T_2(A) \ar[r]^{j_*} \ar[d]_{i_*}& T_2(Y)\ar[d]^{g_Y} & T_2(W)\ar[l]^{((Q_Y)|_W)_*}\\
 & T_2(X) \ar[r]_{g_X}& \R.
}
\end{equation}
Thus, it follows that 
\begin{align*}
 & g_{U_P}((s, v_1, v_2)) = g_X([s, v_1, v_2]) = g_X(i_*((P_A)_*([s,v_1, v_2]))) \\
 ={}& g_X(i_*(\rho_*([s,v_1, v_2]))) = g_Y(j_*(\rho_*([s,v_1, v_2]))) \\
 ={}& g_Y(((Q_Y)|_W)_*((f|_{W'})_*([s,u_1, u_2]))) = g_{U_Q}((r, u_1, u_2)).  
\end{align*}
We have the result. 
\end{proof}

\begin{thm}\label{thm:adjunction_g}
Under the assumption {\em (I)} for inductions in the diagram (\ref{eq:embeddings}), the map 
$g \colon T_2(Z) \to \R$ defined by 
$g((s, v_1, v_2)) \coloneqq g_{U_{P_X}}((s, v_1, v_2))=g_X([s, v_1, v_2])$ 
for $(s, v_1, v_2) \in T_2(U_{P_X})$
is a well-defined weak Riemannian metric on the  diffeological  adjunction space $Z=X\coprod_AY$. 
Moreover, if $g_X$ and $g_Y$ are definite, then so is $g$. 
\end{thm}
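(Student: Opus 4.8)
## Proof Plan for Theorem \ref{thm:adjunction_g}

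The plan is to build the metric $g$ on $Z = X\coprod_A Y$ by the colimit description $T_2(Z) = \colim_{P\in\D^Z}(U_P\times\R^{\dim U_P}\times\R^{\dim U_P})$: it suffices to produce, for each plot $P\in\D^Z$, a symmetric positive covariant $2$-tensor $g_{U_P}$ on $U_P$ compatible with all morphisms $f\colon P\to Q$ in the category $\D^Z$. First I would fix $P\in\D^Z$ and, using that $Z$ carries the quotient diffeology and hence (via the local lifting through $\pi\colon X\coprod Y\to Z$) that $P$ is locally of the form $P_{XY}$ landing in $X$ or in $Y$, define $g_{U_P}$ locally by $g(P_{XY})$ as in the displayed diagram (\ref{eq:g(P)}). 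The first thing to check is that this local definition is independent of the chosen lift and of the open neighborhood — wherever two lifts $P_X$ (into $X$) and $P_Y$ (into $Y$) agree on an overlap, both must land in $i(A)$, resp.\ $j(A)$, and assumption (I), namely $i^*g_X = j^*g_Y$, together with the induction property (Proposition \ref{prop:induction}) forces the two resulting $2$-tensors to coincide; this is exactly the local version of Lemma \ref{lem:well_definedness}. By the locality axiom for tensors (naturality in $\mathsf{Euc}^{\mathrm{op}}$), these local pieces glue to a well-defined symmetric positive $2$-tensor $g_{U_P}$ on all of $U_P$.

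Next I would verify compatibility with morphisms. Given $f\colon P\to Q$ in $\D^Z$ with $Q\circ f = P$, I must show $f^*(g_{U_Q}) = g_{U_P}$, equivalently (diagram (\ref{eq:pushouts})) that $g_{U_P}((s,v_1,v_2)) = g_{U_Q}((r,u_1,u_2))$ whenever $f_*$ sends $(s,v_1,v_2)$ to $(r,u_1,u_2)$. Working locally near $s$, there are two cases: the lifts $P_{XY}$ and $Q_{XY}$ land in the same one of $X,Y$, in which case the equality is immediate from functoriality of $g_X$ (or $g_Y$) under the morphism $f|_{W'}$; or they land in different spaces, say $P_X$ into $X$ and $Q_Y$ into $Y$, which is precisely the situation handled by Lemma \ref{lem:well_definedness} — the chain of equalities there, resting on (\ref{eq:P--g}) and the commutative diagram (\ref{eq:g_g}), gives the result. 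Having established well-definedness on objects and compatibility with morphisms, the universal property of the colimit produces a unique map $g\colon T_2(Z)\to\R$ with $g\circ\pi_{\widehat T_2(U_P)} = g_{U_P}$ for every $P$; since each $g_{U_P}$ is a symmetric positive covariant $2$-tensor, $g$ is a weak Riemannian metric on $Z$ by Definition \ref{defn:WR} (smoothness is automatic, Remark \ref{rem:smoothness_g_and_g(P)}). It is also immediate from the construction that $\iota_X^* g = g_X$ and $\iota_Y^* g = g_Y$ for the canonical smooth maps $\iota_X\colon X\to Z$, $\iota_Y\colon Y\to Z$, so $\iota_X$ and $\iota_Y$ are isometries.

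For the definiteness claim, suppose $g_X$ is definite with respect to a generating family $\mathcal{G}_X$ of $\D^X$ and $g_Y$ with respect to $\mathcal{G}_Y$ of $\D^Y$. The images $\iota_X(\mathcal{G}_X)\cup\iota_Y(\mathcal{G}_Y)$, i.e.\ the set $\{\iota_X\circ Q : Q\in\mathcal{G}_X\}\cup\{\iota_Y\circ Q : Q\in\mathcal{G}_Y\}$, form a generating family of the quotient diffeology $\D^Z$: any plot of $Z$ locally lifts through $\pi$ to a plot of $X$ or of $Y$, which in turn is locally subordinate to $\mathcal{G}_X$ or $\mathcal{G}_Y$. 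For a plot $P = \iota_X\circ Q$ with $Q\in\mathcal{G}_X$, the relation $\iota_X\circ Q = \pi\circ(\text{inclusion of }X)\circ Q$ shows the corresponding tensor is $g(\iota_X\circ Q) = Q^*(g_X) = g_X(Q)$, which is definite by hypothesis; symmetrically for $Y$. Hence $g$ is definite with respect to this generating family.

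The main obstacle I expect is the well-definedness of the local pieces of $g_{U_P}$ and their gluing: one must carefully track that on the overlap region where a plot can be lifted into $X$ and into $Y$ simultaneously the relevant lift actually factors through the common subspace $A$ — this uses the induction hypothesis that $i$ and $j$ are inductions, so $i^{-1}$ and $j^{-1}$ are defined on images — and that condition (I) is precisely the identity needed to make the two candidate values agree. Once this bookkeeping with the pushout square (\ref{eq:pushouts}) and the diagrams (\ref{eq:g_g}) is done cleanly, the colimit argument and the definiteness verification are routine.
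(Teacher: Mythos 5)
Your proposal is correct and follows essentially the same route as the paper: local lifting of plots of $Z=X\coprod_A Y$ through $\pi$, well-definedness of the local tensors via assumption (I) and the induction property (the content of Lemma \ref{lem:well_definedness}, which the paper likewise extends to the case where both lifts land in the same space), the universal property of the colimit defining $T_2(Z)$, and definiteness with respect to the pushed-forward generating family $(\rho_X)_*\mathcal{G}_X\cup(\rho_Y)_*\mathcal{G}_Y$. No gaps.
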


\begin{proof}
Recall the diagram (\ref{eq:pushouts}). 
The proof of Lemma \ref{lem:well_definedness} is valid for the case where 
$P_{XY}$ and $Q_{XY}$ are both plots on $X$ or $Y$. 
In particular, we choose an inclusion $f \colon U_P \to U_Q$ in the diagram, where $s \in U_P$. 
Then, the commutativity of the diagram (\ref{eq:pushouts}) implies that 
the definition of $g_{U_P}$ does not depend on the choice of a neighborhood of $s$.
Therefore, the map $g$ is well defined. 
We show the smoothness of $g$. 
A plot $R\colon U_R \to T_2(Z)$ locally factors through $\{P\} \times T_2(U_P)$. 
Moreover, $R$ locally factors through an open subset $T_2(W')$ of $\{P\} \times T_2(U_P)$ 
which is used when defining the metric $g$. 
We see that $g$ is indeed $g_X$ or $g_Y$ on the open subset $T_2(W')$. 
This implies that $g$ is smooth.

The latter half of the assertion follows from the definition of $g$. 
In fact, let $\mathcal{G}_X$ and $\mathcal{G}_Y$ be generating families 
which give the definiteness of $g_X$ and $g_Y$, respectively. 
Then, the definiteness of $g$ is given with the set $(\rho_X)_*\mathcal{G}_X \cup (\rho_Y)_*\mathcal{G}_Y$, where, 
$\rho_X \colon X \to Z$ and $\rho_Y \colon Y \to Z$ are the natural maps.
\end{proof}

A smooth embedding between manifolds is an induction. 
Thus, Theorem \ref{thm:adjunction_g} provides a crucial example. 
An adjunction diffeological space of $M$ and $N$ does not necessarily have a generating family 
which separates points even if $M$ and $N$ do.

\begin{ex}\label{ex:Y}
We consider the pushout diffeological space $\mathsf{Y} \coloneqq \R_1\coprod_{(1, \infty)}\R_2$ 
of the induction $i\colon (1, \infty ) \to \R$ along itself, where $\R_k$  ($k=1,2$) denotes the copy of $\R$. 
By virtue of Theorem \ref{thm:adjunction_g}, 
we see that the usual metric on $\R$ gives rise to a weak Riemannian metric on $\mathsf{Y}$. 
A point $x \in \R_k$ is denoted by $x_k$ for $k=1,2$. 
The subduction $\pi \colon \R_1 \coprod \R_2 \to \mathsf{Y}$ gives rise to the quotient map  
$D(\pi) \colon \R_1  \coprod \R_2 \to D(\mathsf{Y})$; 
see the argument in Remark \ref{rem:The_quotient_map}. 
Thus, we see that 
$D(\mathsf{Y})$ is non--Hausdorff. 
In fact, the distinct points $[1_1]$ and $[1_2]$ is not separated by any neighborhoods of the points.

For each $\e >0$, 
we have a smooth path $\gamma$ from $[1_1]$ to $[1_2]$ with $\ell(\gamma) < \e$. 
This yields that $d([1_1], [1_2]) = 0$. 
We see that the diffeological space $\mathsf{Y}$ does not satisfy the condition in Definition \ref{defn:points}.

Moreover, this example implies that the inverse of the inclusion relation of the topologies in Theorem \ref{thm:D-top} does not hold in general even if the metric $g$ is definite. 
To see this, we write $(a, b)_k$ for the open interval $(a, b)$ in $\R_k$. 
Then, the set $D(\pi)((1-\e ,1+\e)_1)$ is open in $D(\mathsf{Y})$ for a positive number $\e>0$. 
However, the set $\pi((1-\e ,1+\e)_1)$ is not in $\mathcal{O}_d$. 
In fact, 
$\pi((1-\e ,1+\e)_1)$ does not contain the $\delta$-open ball $U_d([1_1], \delta)$ with center $[1_1]$ 
for every $\delta$ with $0 < \delta < \e$. 
The element $[(1-\frac{\delta}{2})_2]$ is in $U_d([1_1], \delta)$ but not in $\pi((1-\e ,1+\e)_1)$. 
This follows from the fact that 
\[
 d([(1-\frac{\delta}{2})_2], [1_1]) \leq  d([(1-\frac{\delta}{2})_2], [1_2]) + d([1]_2, [1]_1)
 =
 d([(1-\frac{\delta}{2})_2], [1_2]) \leq \frac{\delta}{2}.
\]   
\end{ex}

We refer the reader to \cite[Section 6]{Hicks} for non-Hausdorff Riemannian manifolds; 
see also \cite{O} for many examples of non-Hausdorff manifolds.  
\begin{ex}
Theorem \ref{thm:adjunction_g} allows us to conclude that 
the adjunction space $M\coloneqq\R\coprod_{(1, \infty)}\R^2$ is a definite weak Riemannian diffeological space 
which is not a manifold. Observe that $D(M)$ is non-Hausdorff.
\end{ex}

\begin{ex}\label{ex:+}
We consider the pushout diffeological space $\boldsymbol{+} \coloneqq \R_1\coprod_{\{0\}}\R_2$. 
We see that the diffeology of $\boldsymbol{+}$ has a generating family which separates points; 
see Definition \ref{defn:points}. 
Moreover, the usual metrics on $\R=\R_1$ and $\R=\R_2$ satisfy the assumption (I). 
\end{ex}

\section{A diffeological mapping space}\label{section:MappingSpaces}
In this section, 
we focus on a weak Riemannian metrics on a mapping space and the pseudo-distance associated with the metric. 
The definiteness of the metric is also discussed introducing a subdiffeology of the functional diffeology.

\subsection{Weak Riemannian metrics on mapping spaces}\label{ss:metric_on_MappingSapces} 
In \cite[ Exercise (3)]{PIZ23}, a Riemannian metric on $C^\infty(\R, \R^3)$ is discussed. 
Riemannian metrics on diffeological mapping spaces including the metric due to Iglesias-Zemmour are interpreted in our framework.

Let $M$ be a closed orientable finite dimensional manifold and $N$ a connected
weak Riemannian diffeological space.
Let $\{ (V_\lambda, \varphi_\lambda)\}_{\lambda \in \Lambda}$ be an atlas of $M$. 
Then, for a plot $P\in \D_{\text{func}}$ of the diffeological mapping space $C^\infty(M, N)$, 
$r \in U_P$ and tangent vectors $v,w\in T_rU_P$, 
we define a smooth map $\Theta_{g_N}(P)(v, w)(r, \text{--}) \colon M \to \R$ by 
\[
 \Theta_{g_N}(P)(v, w)(r, m)
 = g_N(\ad(P)\circ (1\times \varphi_\lambda^{-1}))_{(r, \varphi_\lambda(m))}(\underline{v}, \underline{w})
\]
for $m \in V_\lambda$, where 
$\underline{u} =(u, 0)\in T_rU_P\times T_{\varphi_{\lambda}(m)}\varphi_{\lambda}(V_{\lambda})=T_rU_P\times\R^{\dim M}$. 
For the functional diffeology $\D_{\text{func}}$, 
we define a metric $g$ on $C^\infty(M, N)$ by 
\begin{equation}\label{eq:metric_function}
g(P)_r(v, w) = \int_M \Theta_{g_N}(P)(v, w)(r, m)\vol_M
\end{equation}
for $r \in U_P$, where $\vol_M$ is a fixed $(\dim M)$-form on $M$.

The following Lemma~\ref{lem:Theta_simplified} shows the well-definedness of $\Theta_{g_N}$.

\begin{lem}\label{lem:Theta_simplified}
$\Theta_{g_N}(P)(v,w)(r,m)=g_N(\ev_m\circ P)_r(v,w)$.
\end{lem}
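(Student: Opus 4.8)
The plan is to unwind both sides of the claimed identity directly from the definitions and observe that they compute the same tensor on $U_P$. First I would recall that, by the definition of the evaluation map, $\ev_m \colon C^\infty(M,N) \to N$ sends $h \mapsto h(m)$, so that $\ev_m \circ P \colon U_P \to N$ is a plot of $N$; hence the right-hand side $g_N(\ev_m\circ P)_r(v,w)$ makes sense as the value of the weak Riemannian metric $g_N$ on $N$ evaluated on the plot $\ev_m\circ P$. On the left-hand side, for $m \in V_\lambda$ we have by definition
\[
 \Theta_{g_N}(P)(v,w)(r,m)=g_N\bigl(\ad(P)\circ(1\times\varphi_\lambda^{-1})\bigr)_{(r,\varphi_\lambda(m))}(\underline v,\underline w),
\]
with $\underline v=(v,0)$ and $\underline w=(w,0)$.

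The key step is to identify the two plots of $N$ in question. I would show that
\[
 \bigl(\ad(P)\circ(1\times\varphi_\lambda^{-1})\bigr)(r',\,\xi)=\ad(P)(r',\varphi_\lambda^{-1}(\xi))=P(r')\bigl(\varphi_\lambda^{-1}(\xi)\bigr),
\]
so that restricting this plot to the slice $\{r'\}\times\{\varphi_\lambda(m)\}$-direction, i.e.\ precomposing with $s\mapsto(s,\varphi_\lambda(m))$, gives exactly $s\mapsto P(s)(m)=(\ev_m\circ P)(s)$. Because the second components of $\underline v$ and $\underline w$ vanish, the covariant $2$-tensor $g_N(\ad(P)\circ(1\times\varphi_\lambda^{-1}))$ evaluated at $(r,\varphi_\lambda(m))$ on $(\underline v,\underline w)$ depends only on this slice plot: by naturality of $g_N$ (i.e.\ the pullback compatibility of the tensors $g_N(Q)$ along smooth maps, which is part of Proposition~\ref{prop:one-to-one}), pulling back along $\iota_m\colon s\mapsto(s,\varphi_\lambda(m))$ yields $g_N(\ev_m\circ P)$, and $(\iota_m)_*$ sends $v\mapsto\underline v$, $w\mapsto\underline w$. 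Combining these gives
\[
 g_N\bigl(\ad(P)\circ(1\times\varphi_\lambda^{-1})\bigr)_{(r,\varphi_\lambda(m))}(\underline v,\underline w)=g_N(\ev_m\circ P)_r(v,w),
\]
which is the assertion. In particular this also shows the value is independent of the chart $\varphi_\lambda$ containing $m$, so $\Theta_{g_N}(P)(v,w)(r,\textrm{--})$ is a well-defined smooth function on $M$.

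I expect the only genuine point requiring care—rather than a true obstacle—is the bookkeeping of tangent vectors and the use of naturality: one must verify that precomposition with $\iota_m$ at the level of plots corresponds, under $\widehat T_2$, to the map sending $(r,v,w)$ to $(r,\varphi_\lambda(m),\underline v,\underline w)$, and then invoke that $g_N(Q\circ f)_r = f^*\bigl(g_N(Q)\bigr)_r$ for $f=\iota_m$. Everything else is a direct substitution of definitions. No smoothness issues arise beyond those already guaranteed by $P\in\D_{\text{func}}$ (which makes $\ad(P)$ smooth, hence $\ev_m\circ P$ a plot of $N$).
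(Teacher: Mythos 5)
Your proposal is correct and follows essentially the same route as the paper: your slice map $\iota_m$ is exactly the paper's $j_{m,\lambda}(x)=(x,\varphi_\lambda(m))$, and the two key observations — that $\underline v=(\iota_m)_*(v)$ and that naturality of the $2$-tensor $g_N$ lets you pull back along $\iota_m$ to get $g_N(\ev_m\circ P)$ — are precisely the steps of the paper's proof. No gaps.
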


\begin{proof}
Define $j_{m,\lambda}\colon U_P\to U_P\times\varphi_{\lambda}(V_{\lambda})$ by $j_{m,\lambda}(x)\coloneqq(x,\varphi_{\lambda}(m))$.
Then $\underline{v}=(j_{m,\lambda})_*(v)$ and 
\begin{align*}
 \Theta_{g_N}(P)(v,w)(r,m)
 &=g_N(\ad(P)\circ(\id_{U_P}\times\varphi_{\lambda}^{-1}))_{(r,\varphi(m))}(\underline{v},\underline{w})\\
 &=g_N(\ad(P)\circ(\id_{U_P}\times\varphi_{\lambda}^{-1})\circ j_{m,\lambda})_r(v,w).
\end{align*}
It is easy to see that $\ad(P)\circ(\id_{U_P}\times\varphi_{\lambda}^{-1})\circ j_{m,\lambda}=\ev_m\circ P$.
\end{proof}

To see the smoothness of $g$, let $V$ and $W$ be vector fields on $U_P$. 
Since $g_N$ is a weak Riemannian metric on $N$, $g_N(\ad(P) \circ (\id_{U_P} \times \varphi^{-1}_\lambda))$ is a 2-tensor field on $U_P \times U_\lambda$, 
where $(V_\lambda, \varphi_\lambda)$ is a coordinate neighborhood of $m \in M$, and the function on $U_P \times V_\lambda$ defined by 
\begin{equation*}
 (r, \varphi^{-1}(m)) \mapsto g_N(\ev_m\circ P)_r(V(r), W(r))
\end{equation*}
is smooth. 
Thus the map $r\mapsto g(P)_r(V(r), W(r))$ on $U_P$ is smooth and hence $g(P)$ is a smooth 2-tensor.

We show that $g$ is compatible with coordinate changes. Let $ F \colon V \to U_P $ be a $ C^\infty $-map.  
Then, it follows that 
\[
 g_N(\ev_m\circ(P\circ F))_r(v,w)
 =g_N(\ev_m\circ P)_{F(r)}(dF_r(v),dF_v(w))
\]  
Thus, we have   
\[
 \Theta_{g_N}(P\circ F)(v, w)(r, m) = \Theta_{g_N}(P)(dF_r(v), dF_r(w))(F(r), m).
\]  
Therefore, it follows that 
\[  
 g(P\circ F)_r(v, w)
 = g(P)_{F(r)}(dF_r(v), dF_r(w)) 
 = F^*g(P)_r(v, w).  
\]  
This implies that $g$ is a diffeological covariant 2-tensor field on $ C^\infty(M, N) $.  
Symmetry and positivity follow from the fact that $g_N$ is a metric.  
Moreover, we have

\begin{prop}\label{prop:distance_on_Func} 
For the weak Riemannian metric $g$ on $(C^\infty(M, N), \D_{\text{\em func}})$ in \text{\em (\ref{eq:metric_function})}, 
the pseudo-distance $d$ defined by $g$ is indeed a distance provided the pseudo-distance $d_N$ defined by $g_N$ is a distance on $N$. 
\end{prop}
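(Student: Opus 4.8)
The plan is to verify the one remaining pseudo-distance axiom, namely that $d(f_0,f_1)>0$ whenever $f_0\neq f_1$ in $C^\infty(M,N)$; symmetry, the triangle inequality and $d(f,f)=0$ are automatic. If no smooth path in $C^\infty(M,N)$ joins $f_0$ and $f_1$, then $d(f_0,f_1)=\infty$ and we are done, so assume such paths exist. Since $f_0\neq f_1$, pick $m_0\in M$ with $f_0(m_0)\neq f_1(m_0)$. Because $d_N$ is assumed to be a distance, $c\coloneqq\tfrac12 d_N(f_0(m_0),f_1(m_0))>0$; and because $d_N$ is continuous on $D(N)\times D(N)$ by Theorem~\ref{thm:D-top} while $f_0,f_1\colon M=D(M)\to D(N)$ are continuous (Example~\ref{ex:Dm}), the set $B\coloneqq\{m\in M\mid d_N(f_0(m),f_1(m))>c\}$ is a nonempty open subset of $M$. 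Since $\vol_M$ is a (positive) volume form and $M$ is compact, $V\coloneqq\int_B\vol_M$ satisfies $0<V<\infty$.

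The key step is to turn the ``$L^2$-type'' integral defining $g$ into a genuine lower bound by localising to $B$. Fix $\gamma\in\mathrm{Path}(C^\infty(M,N);f_0,f_1)$. By Lemma~\ref{lem:Theta_simplified} and \eqref{eq:metric_function}, writing $h(t,m)\coloneqq g_N(\ev_m\circ\gamma)_t(1,1)\geq 0$ (the evaluation $\ev_m\colon C^\infty(M,N)\to N$ being smooth for the functional diffeology), we have
\[
 g(\gamma)_t(1,1)=\int_M h(t,m)\,\vol_M\ \geq\ \int_B h(t,m)\,\vol_M .
\]
Applying the Cauchy--Schwarz inequality on $B$ gives $\int_B h(t,m)^{1/2}\,\vol_M\leq V^{1/2}\bigl(\int_B h(t,m)\,\vol_M\bigr)^{1/2}\leq V^{1/2}\bigl(g(\gamma)_t(1,1)\bigr)^{1/2}$, i.e.
\[
 \bigl(g(\gamma)_t(1,1)\bigr)^{1/2}\ \geq\ V^{-1/2}\int_B h(t,m)^{1/2}\,\vol_M .
\]
Integrating over $t\in[0,1]$ and interchanging the order of integration by Tonelli's theorem (the function $(t,m)\mapsto h(t,m)^{1/2}$ is continuous, hence measurable and non-negative; continuity of $h$ is exactly the smoothness observed for $g$ just before this proposition), and using that $\ev_m\circ\gamma$ is a smooth path in $N$ from $f_0(m)$ to $f_1(m)$, we obtain
\[
 \ell(\gamma)\ \geq\ V^{-1/2}\int_B\Bigl(\int_0^1 h(t,m)^{1/2}\,dt\Bigr)\vol_M\ =\ V^{-1/2}\int_B \ell_N(\ev_m\circ\gamma)\,\vol_M\ \geq\ V^{-1/2}\int_B d_N(f_0(m),f_1(m))\,\vol_M .
\]
Since $d_N(f_0(m),f_1(m))>c$ on $B$, the right-hand side is at least $V^{-1/2}cV=c\sqrt{V}$. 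Taking the infimum over all admissible $\gamma$ yields $d(f_0,f_1)\geq c\sqrt{V}>0$, which completes the argument.

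The main obstacle is precisely the passage carried out in the second paragraph: a naive pointwise estimate of the form $\ell(\gamma)\geq\ell_N(\ev_{m_0}\circ\gamma)\geq d_N(f_0(m_0),f_1(m_0))$ is \emph{false}, since the integral of the non-negative integrand over all of $M$ can be small even when it is large at a single point; one must instead integrate over a set $B\subset M$ of positive $\vol_M$-measure and accept the Cauchy--Schwarz constant $V^{1/2}$. The remaining points to check — smoothness of $\ev_m$ (so that $\ev_m\circ\gamma$ is a legitimate competitor for $d_N$), joint continuity of $h$ for Tonelli, and positivity of $V$ — are routine, the last using that $\vol_M$ is a volume form. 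Note that no separation condition on a generating family (as in Definition~\ref{defn:points}) is needed, in contrast with Theorem~\ref{thm:condition_for_g_to_be_definite}; the averaging over $M$ supplies definiteness ``for free'' once $d_N$ is a distance.
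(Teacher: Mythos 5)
Your proof is correct, and it takes a genuinely different route from the paper's. The paper argues by contradiction: from $d(f_0,f_1)=0$ it extracts a sequence of paths with $\ell(\gamma_n)\to 0$, applies Cauchy--Schwarz over all of $M$ and Fubini to get $\int_M\ell_N(\ev_m\circ\gamma_n)\,\vol_M\to 0$, then invokes \cite[3.12 Theorem]{Ru} to pass to a subsequence converging to $0$ almost everywhere, concluding $f_0=f_1$ off a null set $F_M$; a final topological step (continuity of $m\mapsto d_N(f_0(m),f_1(m))$ via Theorem~\ref{thm:D-top}) shows $F_M$ would otherwise contain a nonempty open set, a contradiction. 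You instead argue contrapositively and localise: using the same continuity of $d_N\circ(f_0\times f_1)$ you produce a nonempty open set $B$ on which the endpoints are uniformly $c$-separated, apply Cauchy--Schwarz over $B$ rather than over $M$, and obtain the explicit uniform lower bound $\ell(\gamma)\ge c\sqrt{\vol(B)}$ for every competitor path. This avoids the subsequence/a.e.-convergence machinery entirely and yields a quantitative estimate $d(f_0,f_1)\ge \tfrac12 d_N(f_0(m_0),f_1(m_0))\sqrt{\vol(B)}$, which the paper's argument does not provide; the ingredients you rely on (smoothness of $(t,m)\mapsto g_N(\ev_m\circ\gamma)_t(1,1)$, Tonelli, positivity of $\vol_M$ on nonempty open sets, and Theorem~\ref{thm:D-top}) are exactly those the paper already uses, so nothing extra is assumed. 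Your closing observation that the naive pointwise bound at a single $m_0$ fails, and that one must integrate over a set of positive measure at the cost of the constant $\vol(B)^{1/2}$, correctly identifies the crux that both arguments must negotiate.
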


\begin{proof}
Assume that 
$d_{C^\infty(M, N)}(f_0, f_1) =  \inf_{\gamma \in \mathrm{Path}(C^\infty(M, N); f_0, f_1)} \ell(\gamma) = 0$ for $f_0, f_1 \in C^\infty(M, N)$. 
Then, it follows form  the definition of the length $\ell$ that there exists a sequence of paths $\{\gamma_n\}_{n=1,2,\dots}$ from $f_0$ to $f_1$ such that
\begin{equation}\label{eq:converges_to_zero}
 0
 = \lim_{n\to \infty}  \int_0^1 (g(\gamma_n)_s(1,1))^{\frac{1}{2}} \, ds
 = \lim_{n\to \infty}  \int_0^1\left(\int_M  g_N(\ev_m\circ \gamma_{n})_s(1,1) \vol_M\right)^{\frac{1}{2}} \! ds.
\end{equation}
In what follows, we write $h_n(m, s)$ for $g_N(\ev_m\circ \gamma_{n})_s(1,1)$. 
By applying the Cauchy--Schwarz inequality, we have
\begin{align*}
 \int_M  \left( h_n(m, s) \right)^{\frac{1}{2}} \cdot 1 \ \vol_M
 \leq
 \left(\int_M  h_n(m, s) \ \vol_M\right)^{\frac{1}{2}} \cdot \left(\int_M  1 \ \vol_M\right)^{\frac{1}{2}}.  
\end{align*}
Thus,
\begin{align*}
 0 \leq \int_0^1 \left(\frac{1}{\vol(M)^{\frac{1}{2}}} \ \int_M  \left( h_n(m, s) \right)^{\frac{1}{2}} \vol_M \right) ds
 \leq 
 \int_0^1\left(\int_M  h_n(m, s) \  \vol_M\right)^{\frac{1}{2}} ds,
\end{align*}
where $\vol(M) = \int_M \vol_M$. 
Taking the limit as $n \to \infty$, we have
\begin{align*}
 \lim_{n\to\infty} \int_0^1 \left(\frac{1}{\vol(M)^{\frac{1}{2}}}\int_M  \left( h_n(m, s) \right)^{\frac{1}{2}} \vol_M \right) ds = 0
\end{align*}
by \eqref{eq:converges_to_zero}.
Since $\vol(M)$ is positive constant, it follows from the Fubini theorem that  
\begin{align*}
 0 &= \lim_{n\to\infty} \int_0^1 \left( \int_M  \left( h_n(m, s) \right)^{\frac{1}{2}} \vol_M \right) ds 
   = \lim_{n\to\infty} \int_M \left( \int_0^1  \left( h_n(m, s) \right)^{\frac{1}{2}} ds \right) \vol_M.
\end{align*}
By applying \cite[3.12 Theorem]{Ru}  to the integration on $M$, we have 
\begin{align}\label{eq:almostM_1} 
 \lim_{k\to \infty} \ell(\ev_m\circ\gamma_{n_k}) = \lim_{k\to \infty}  \int_0^1\left( h_{n_k}(m, s)\right)^{\frac{1}{2}} ds  =0 
\end{align}
for $m$ almost everywhere in $M$ and some subsequence $\{\gamma_{n_k}\}$ of $\{\gamma_{n}\}$. 
Let $F_M$ be the subset of $M$ consisting of $m\in M$ that does not satisfy \eqref{eq:almostM_1}.
Then $F_M$ is of measure zero. 
For each fixed element $m \in (F_M)^c$, the smooth map $\ev_m\circ\gamma_{n_l}$ is regarded as a path on $N$ from $f_0(m)$ to $f_1(m)$.
This yields that 
$d_N(f_0(m), f_1(m)) = 0$ for $m \in (F_M)^c$. 
Since $d_N$ is a distance, it follows that $f_0 = f_1$ on $(F_M)^c$.

We observe that the composite
\[
 \varphi\colon M
 \xrightarrow{f_0\times f_1} D(N\times N)\to D(N)\times D(N)
 \xrightarrow{d_N}\R_{\ge 0}
\]
is continuous.
This is because
\begin{enumerate}[label=(\alph*),leftmargin=*]
\item
	$f_0\times f_1$ is continuous with respect to the $D$-topology on $N\times N$ since $f_0$ and $f_1$ are smooth, and
\item
	$d_N\colon N\times N\to\R_{\ge 0}$ is continuous by Theorem \ref{thm:D-top}. 
\end{enumerate}
We see that $x \in F_M$ if $f_0(x) \neq f_1(x)$. 
Suppose that $f_0(x_0)\neq f_1(x_0)$ for some $x_0\in M$.
Then $\varphi(x_0)>0$ and $U\coloneqq\varphi^{-1}(\R_{>0})$ is an open neighborhood of $x_0$. 
This yields that $U \subset F_M$. However, since the measure of $F_M$ is zero,
$F_M$ cannot contain any open set, which is a contradiction. 
\end{proof}

\subsection{A subdiffeology of the functional diffeology.}\label{sect:D'} 
We consider the definiteness of a weak Riemannian metric on a diffeological mapping space.  
Let $X$ and $Y$ be diffeological spaces and $\mathcal{G}$ a generating family of $\D^Y$.  
Then, the set $C^\infty(X, Y)$ of smooth maps is endowed with the functional diffeology $\D_{\text{func}}$; 
see Example \ref{ex:manifolds_subsets_products} (\ref{item:functional_diffeology}).  
We introduce a subdiffeology of $\D_{\text{func}}$. 
To this end, we consider the following condition (E) for a plot $P\in \D_{\text{func}}$. 
\begin{itemize}
\item[(E)]
For $r \in U_P$ and $m \in X$, there exists an open neighborhood $W_{r, m}$ of $r$ in $U_P$ such that the composite 
$
\xymatrix@C20pt@R15pt{
W_{r, m}  \ar@{^{(}->}[r]  &  U_P \ar[r]^-P  & C^\infty(X, Y) \ar[r]^-{\ev_m} & Y\\ 
}
$
is in $\mathcal{G}$, where $\ev_m$ denotes the evaluation map at $m$. 
\end{itemize}
Let $\mathcal{F}^{XY}_{\mathcal{G}}$ be the subset of plots in $\D_{\text{func}}$ each of which satisfies the condition (E). 
We may write $\mathcal{F}_{\mathcal{G}}$ for $\mathcal{F}^{XY}_{\mathcal{G}}$ when the domain $X$ and the codomain $Y$ are clear in the context. 
Let $\D'\coloneqq\langle\mathcal{F}^{XY}_{\mathcal{G}}\rangle$ be the diffeology generated by $\mathcal{F}^{XY}_{\mathcal{G}}$.  
It is immediate that $\D'$ is contained in $\D_{\text{func}}$ and that 
the identity map $(C^\infty(X, Y), \D') \to (C^\infty(X, Y), \D_{\text{func}})$ is smooth.

\begin{rem}
Consider the case that $N$ is a Riemannian manifold.
Suppose $P\colon U_P\to C^{\infty}(M,N)$ is such that the map $\ad(P)(-,m)=\ev_m\circ P\colon U_P\hookrightarrow N$ is an embedding for any $m\in M$.
For example $P$ is such a plot if $U_P\cong\mathrm{Int}\, D^{\dim N}$ and $\ad(P)\colon U_P\times M\to N$ is a framed immersion.
Then clearly $P$ itself satisfies the condition (E), and $\mathcal{F_{\mathcal{G}}}$ is not empty if $\mathcal{G}$ contains such a plot.
\end{rem}

\begin{ex}\label{ex:(E)}
The diffeology $\D'$ does not coincide with $\D_{\text{func}}$ in general. 
In fact, we consider the case where $X=Y = U$ is an open subset of $\R^n$ with $n\geq 1$ and $\mathcal{G} \coloneqq \{\id \colon U \to U \}$. 
Then, the standard diffeology of $U$ is generated by $\mathcal{G}$. 
In this case, for any $P\in\mathcal{F}_{\mathcal{G}}$, the image of $P$ consists of constant maps; 
for any $r\in U_P$ and $m\in U$, there exists an open neighborhood $W=W_{r,m}$ of $r$ in $U_p$ such that $\ev_m\circ P|_W=\id_U$ (in particular $W=U_P=U$ must hold), 
and hence, 
for any $x\in W$, we have $P(x)(m)=x$.
The same holds for every $Q\in\mathcal{D}'=\langle\mathcal{F}_{\mathcal{G}}\rangle$ because $Q$ locally factors through some $P\in\mathcal{F}_{\mathcal{G}}$.
Therefore $\D'$ is strictly finer than $\D_{\text{func}}$.
\end{ex}

The following lemma describes fundamental properties of the diffeology $\mathcal{D}'$. 

\begin{lem}\label{lem:C}
Let $f\colon X\to Y$ be a smooh map.
\begin{enumerate}[wide,leftmargin=0pt,label=(\arabic*)]
\item
	Let $Z$ be a diffeological space with a generating family $\mathcal{G}_Z$ of the diffeology. Then, the map 
	\[
	 f^* \colon (C^\infty(Y, Z), \langle\mathcal{F}^{YZ}_{{\mathcal{G}_Z}}\rangle) \to (C^\infty(X, Z), \langle\mathcal{F}^{XZ}_{{\mathcal{G}_Z}}\rangle)
	\]
	defined by $f^*(\varphi) = \varphi\circ f$ is smooth.
\item
	Let $\mathcal{G}_Y$ be a generating family of the diffeology of $Y$ and $\mathcal{G}_X$ be the pullback $f^*\mathcal{G}_Y$ of $\mathcal{G}_Y$ by $f$; 
	see (\ref{eq:pullback_generating_family}).  
	Then, for any $Z$, the map 
	\[
	 f_* \colon (C^\infty(Z, (X,\langle\mathcal{G}_X\rangle)), \langle\mathcal{F}^{ZX}_{\mathcal{G}_X}\rangle)
	 \to (C^\infty(Z, Y), \langle\mathcal{F}^{ZY}_{\mathcal{G}_Y}\rangle)
	\]
	defined by $f_*(\psi)= f\circ \psi$ is smooth. 
\end{enumerate}
\end{lem}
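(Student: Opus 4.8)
The plan is to prove both statements by the same mechanism. In each case the source is equipped with the diffeology generated by $\mathcal{F}^{YZ}_{\mathcal{G}_Z}$ (resp.\ $\mathcal{F}^{ZX}_{\mathcal{G}_X}$), so by compatibility and locality it suffices to check that the map in question carries every plot in that generating family to a plot of the target, treating the locally-constant case separately and trivially. The two set-theoretic identities I will use repeatedly are $\ev_m\circ f^{*}=\ev_{f(m)}$ on $C^\infty(Y,Z)$ and $\ev_z\circ f_{*}=f\circ\ev_z$ on $C^\infty(Z,X)$, which hold on the nose and hence pass to parametrizations after composition with a plot.

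For (1), I would take $P\in\mathcal{F}^{YZ}_{\mathcal{G}_Z}$ and first observe that $\ad(f^{*}\circ P)$ is the composite $U_P\times X\xrightarrow{\id_{U_P}\times f}U_P\times Y\xrightarrow{\ad(P)}Z$, which is smooth; hence $f^{*}\circ P\in\D_{\text{func}}$. Then for $r\in U_P$ and $m\in X$, applying condition (E) for $P$ at the point $f(m)\in Y$ gives an open neighborhood $W$ of $r$ with $\ev_{f(m)}\circ P|_{W}\in\mathcal{G}_Z$; since $\ev_{m}\circ(f^{*}\circ P)|_{W}=\ev_{f(m)}\circ P|_{W}$, this is exactly condition (E) for $f^{*}\circ P$ at $(r,m)$. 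Thus $f^{*}\circ P\in\mathcal{F}^{XZ}_{\mathcal{G}_Z}\subset\langle\mathcal{F}^{XZ}_{\mathcal{G}_Z}\rangle$, and locality propagates the conclusion to every plot of $\langle\mathcal{F}^{YZ}_{\mathcal{G}_Z}\rangle$, so $f^{*}$ is smooth.

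For (2), I would first record that $f$ is smooth as a map $(X,\langle\mathcal{G}_X\rangle)\to Y$: for $Q\in\mathcal{G}_X=f^{*}\mathcal{G}_Y$ we have $f\circ Q\in\mathcal{G}_Y$ by the definition \eqref{eq:pullback_generating_family} of the pullback generating family, and locality extends this to all of $\langle\mathcal{G}_X\rangle$ (this also makes $f_{*}$ well defined on morphisms). Then for $P\in\mathcal{F}^{ZX}_{\mathcal{G}_X}$ the adjoint $\ad(f_{*}\circ P)=f\circ\ad(P)$ is smooth, so $f_{*}\circ P\in\D_{\text{func}}$; and for $r\in U_P$, $z\in Z$, condition (E) for $P$ produces an open neighborhood $W$ of $r$ with $\ev_{z}\circ P|_{W}\in\mathcal{G}_X=f^{*}\mathcal{G}_Y$, which by definition of the pullback means $\ev_{z}\circ(f_{*}\circ P)|_{W}=f\circ(\ev_{z}\circ P|_{W})\in\mathcal{G}_Y$, i.e.\ condition (E) for $f_{*}\circ P$. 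Hence $f_{*}\circ P\in\mathcal{F}^{ZY}_{\mathcal{G}_Y}$, and one concludes as in (1).

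I do not expect a genuine obstacle: the argument is a matter of tracking the two evaluation identities and the smoothness of the adjoints. The one point worth emphasizing is that in (2) the hypothesis $\mathcal{G}_X=f^{*}\mathcal{G}_Y$ does double duty — it is what makes $f$ smooth out of $\langle\mathcal{G}_X\rangle$ and simultaneously what lets condition (E) transfer along $f_{*}$ — so it is the smaller pullback family \eqref{eq:pullback_generating_family}, rather than the full pullback diffeology, that is the correct object to feed into the statement.
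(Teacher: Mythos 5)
Your proposal is correct and follows essentially the same route as the paper's proof: both arguments reduce to the identities $\ev_m\circ f^{*}=\ev_{f(m)}$ and $\ev_z\circ f_{*}=f\circ\ev_z$, first checking membership in $\D_{\text{func}}$ and then transferring condition (E) along these identities, using in (2) that $\mathcal{G}_X=f^{*}\mathcal{G}_Y$ sends members to $\mathcal{G}_Y$ under postcomposition with $f$. Your write-up merely spells out the adjoint-smoothness step and the propagation by locality a bit more explicitly than the paper does.
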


\begin{proof}
Since both $f\colon X\to Y$ and $f\colon(X,\langle\mathcal{G}_X\rangle)\to Y$ are smooth, 
it follows that the maps $f^*$ and $f_*$ are smooth with respect to the functional diffeology. 
Moreover, we see that $\ev_x\circ(f^*\circ Q)= \ev_{f(x)}\circ Q$ for every $Q \colon U_Q\to C^{\infty}(Y,Z)$ and $x \in X$. 
Thus, if $Q\in\mathcal{F}_{\mathcal{G}_Z}^{YZ}$, then the map $\ev_x\circ(f^*\circ Q)$ restricted to an appropriate domain is in $\mathcal{G}_Z$. 
This implies that $f^*\circ Q\in\mathcal{F}_{\mathcal{G}_Z}^{XZ}$, proving (1). 
The fact that $\ev_z\circ f_*=f\circ\ev_z$ and the definition of $\mathcal{G}_X$ yield that 
$f_*\circ P$ is in $\mathcal{F}_{\mathcal{G}_Y}^{ZY}$ for each $P \in \mathcal{F}_{\mathcal{G}_X}^{ZX}$. 
We have the results. 
\end{proof}

\begin{lem}\label{lem:a_point} 
Let $\mathcal{G}$ be a generating family of the diffeology of a diffeological space $Y$.  
The functional diffeology $\D_{\text{\em func}}$ of $C^\infty(* , Y)$ coincides with $\langle \mathcal{F}_{\mathcal{G}}^{*Y} \rangle$.
\end{lem}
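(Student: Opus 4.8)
The plan is to identify $C^\infty(*, Y)$ with $Y$ as a set — every smooth map from the one-point diffeological space $*$ to $Y$ is determined by its value, so the evaluation $\ev\colon C^\infty(*,Y)\to Y$ at the unique point of $*$ is a bijection. First I would check that $\ev$ is in fact a diffeomorphism when $C^\infty(*,Y)$ carries the functional diffeology $\D_{\text{func}}$: a parametrization $P\colon U_P\to C^\infty(*,Y)$ lies in $\D_{\text{func}}$ iff its adjoint $\ad(P)\colon U_P\times * \to Y$ is smooth, and since $U_P\times *\cong U_P$ this happens iff $\ev\circ P\colon U_P\to Y$ is a plot of $Y$. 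Hence $\D_{\text{func}}$ is precisely the initial diffeology pulled back along $\ev$, i.e.\ $\ev$ is an induction, and being a bijection it is a diffeomorphism. So it suffices to show that $\langle\mathcal{F}^{*Y}_{\mathcal{G}}\rangle$, transported along $\ev$, equals $\D^Y$.

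The inclusion $\langle\mathcal{F}^{*Y}_{\mathcal{G}}\rangle\subset\D_{\text{func}}$ is already noted in the text, so the substance is the reverse inclusion $\D_{\text{func}}\subset\langle\mathcal{F}^{*Y}_{\mathcal{G}}\rangle$. Take $P\in\D_{\text{func}}$; then $\ev\circ P$ is a plot of $Y$. Since $\mathcal{G}$ generates $\D^Y$, for each $r\in U_P$ there is an open neighborhood $W_r$ of $r$ on which either $\ev\circ P|_{W_r}$ is constant or $\ev\circ P|_{W_r}=Q\circ h$ for some $Q\in\mathcal{G}$ and smooth $h\colon W_r\to U_Q$. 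In the constant case $P|_{W_r}$ is a constant parametrization of $C^\infty(*,Y)$; in the factoring case I claim $P|_{W_r}$ itself already satisfies condition (E) with respect to $\mathcal{G}$ — indeed, $X=*$ has a single point $m$, and $\ev_m\circ(P|_{W_r})=\ev\circ P|_{W_r}=Q\circ h$ factors through $Q\in\mathcal{G}$, so taking $W_{r,m}=W_r$ verifies (E). Therefore $P|_{W_r}\in\mathcal{F}^{*Y}_{\mathcal{G}}$, and since this holds on a neighborhood of every point, $P\in\langle\mathcal{F}^{*Y}_{\mathcal{G}}\rangle$ by definition of the generated diffeology (Example~\ref{ex:manifolds_subsets_products}~(\ref{item:generating_family})).

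The only point requiring care is the bookkeeping around the identification $C^\infty(*,Y)\cong Y$ and the fact that when $X=*$ the ``for all $m\in X$'' quantifier in condition (E) collapses to a single instance and the localization $W_{r,m}$ can be taken to be $W_r$ itself; once that is pinned down the argument is essentially a restatement of how $\langle\mathcal{G}\rangle$ is built. The main obstacle — such as it is — is making sure no subtlety hides in the adjunction $U_P\times *\cong U_P$ and in checking smoothness is literally the same condition on both sides; there is no deeper difficulty, and I expect the proof to be short.
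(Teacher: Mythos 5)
Your overall strategy --- identifying $C^\infty(*,Y)$ with $Y$ via evaluation and reducing to the local factorization property defining $\langle\mathcal{G}\rangle$ --- is the same as the paper's, but there is a genuine gap in the step where you claim $P|_{W_r}$ itself satisfies condition (E). Condition (E) requires the composite $\ev_m\circ P|_{W_{r,m}}$ to \emph{be an element of} $\mathcal{G}$, not merely to factor through one. In your factoring case you have $\ev\circ P|_{W_r}=Q\circ h$ with $Q\in\mathcal{G}$, and $Q\circ h$ need not belong to $\mathcal{G}$: a generating family is not assumed closed under precomposition with smooth maps. Concretely, take $Y=U$ an open subset of $\R^n$ with $\mathcal{G}=\{\id_U\}$ (compare Example~\ref{ex:(E)}): a parametrization of $C^\infty(*,U)$ satisfying (E) must restrict, near each point of its domain, to the inverse of the evaluation bijection defined on all of $U$, so a plot whose domain has dimension different from $n$ (or which is not injective) has no restriction lying in $\mathcal{F}^{*U}_{\mathcal{G}}$ --- even though, as the lemma asserts, it does lie in $\langle\mathcal{F}^{*U}_{\mathcal{G}}\rangle$. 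So your intermediate claim is false while the conclusion survives.

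The repair is exactly what the paper does. Instead of trying to place $P|_{W_r}$ in $\mathcal{F}^{*Y}_{\mathcal{G}}$, introduce the plot $\widehat{Q}\colon U_Q\to C^\infty(*,Y)$ corresponding to $Q$ under your identification (in the paper's notation, $\ad(\widetilde{Q})$ with $\widetilde{Q}=Q\circ\pr\colon U_Q\times *\to Y$). This plot satisfies (E) on the nose, since $\ev_*\circ\widehat{Q}=Q\in\mathcal{G}$, hence $\widehat{Q}\in\mathcal{F}^{*Y}_{\mathcal{G}}$; and $P|_{W_r}=\widehat{Q}\circ h$ factors through it via the smooth map $h$, which is precisely what membership in the generated diffeology $\langle\mathcal{F}^{*Y}_{\mathcal{G}}\rangle$ requires by Example~\ref{ex:manifolds_subsets_products}~(\ref{item:generating_family}). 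With that substitution the rest of your argument, including the reduction via the adjunction $U_P\times *\cong U_P$, goes through.
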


\begin{proof}
It suffices to show that $\D_{\text{func}} \subset  \langle \mathcal{F}_{\mathcal{G}}^{*Y} \rangle$.
For a plot $P\in\D_{\text{func}}$, the adjoint $\text{ad}(P) \colon U_P \times * \to Y$ is regarded as a plot of $Y$. 
Thus, for each $r \in U_P$, there exist a neighborhood $W_r$ of $r$ in $U_P$,  $Q \in \mathcal{G}$ and a smooth map $h \colon W_r \to U_Q$ such that 
$\text{ad}(P)|_{W_r\times *} = \widetilde{Q}\circ (h\times *)$, 
where $\widetilde{Q}$ is defined by $\widetilde{Q} \coloneqq Q\circ \text{pr} : U_Q\times * \to Y$ with the projection $\text{pr}$ to the first factor.  
We see that $P|_{W_r} = \text{ad}(\widetilde{Q})\circ h$ and $\ev_*\circ\ad(\widetilde{Q}) = Q$. 
This implies that $P$ is in $\langle \mathcal{F}_{\mathcal{G}}^{*Y} \rangle$. 
\end{proof}

Let $s \colon Y \to C^\infty(X, Y)$ be the section of $\ev_x$ at any $x$; that is, $s(y)(x) = y$ for $y \in Y$ and $x \in X$.
\begin{lem}\label{lem:an_induction} 
The section $s \colon (Y, \langle \mathcal{G} \rangle) \to (C^\infty(X, Y), \D')$ is an induction. 
\end{lem}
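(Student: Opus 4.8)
The plan is to check directly the two defining properties of an induction, as recorded in Example \ref{ex:manifolds_subsets_products} (\ref{item:induction}): that $s$ is injective, and that $\langle\mathcal{G}\rangle$ coincides with the initial diffeology on $Y$ for $s$. The latter amounts to showing that a parametrization $P\colon U_P\to Y$ lies in $\langle\mathcal{G}\rangle$ if and only if $s\circ P$ lies in $\D'=\langle\mathcal{F}^{XY}_{\mathcal{G}}\rangle$. Injectivity is immediate, since evaluating an equality $s(y)=s(y')$ at any point of $X$ forces $y=y'$; here and below we tacitly assume $X\neq\emptyset$, so that the evaluation maps are defined. I would first record the two elementary identities that drive the argument: (i) $\ad(s\circ P)=P\circ\pr_1\colon U_P\times X\to Y$, so $s\circ P$ is automatically a plot of the functional diffeology whenever $P$ is a plot of $Y$; and (ii) $\ev_m\circ s=\id_Y$ for every $m\in X$, whence $\ev_m\circ(s\circ P)=P$.

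For the forward implication, which is exactly the smoothness of $s$, take $P\in\langle\mathcal{G}\rangle$ and fix $r\in U_P$. By definition of the generated diffeology there is an open neighborhood $V_r$ of $r$ on which $P$ is either constant or of the form $Q\circ f$ with $Q\in\mathcal{G}$ and $f\colon V_r\to U_Q$ smooth. In the first case $s\circ P|_{V_r}$ is a constant plot, hence lies in $\langle\mathcal{F}^{XY}_{\mathcal{G}}\rangle$. In the second case I would verify that $s\circ Q\in\mathcal{F}^{XY}_{\mathcal{G}}$: it is a functional plot by (i), and it satisfies condition (E) with $W_{r',m}=U_Q$ because $\ev_m\circ(s\circ Q)=Q\in\mathcal{G}$ by (ii); then $s\circ P|_{V_r}=(s\circ Q)\circ f$ is a composite of $s\circ Q\in\mathcal{F}^{XY}_{\mathcal{G}}$ with the smooth map $f$. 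Locality of $\langle\mathcal{F}^{XY}_{\mathcal{G}}\rangle$ then yields $s\circ P\in\D'$.

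For the reverse implication, suppose $s\circ P\in\langle\mathcal{F}^{XY}_{\mathcal{G}}\rangle$ and fix $r\in U_P$. Choose an open neighborhood $V_r$ of $r$ on which $s\circ P$ is either constant or equals $R\circ f$ with $R\in\mathcal{F}^{XY}_{\mathcal{G}}$ and $f\colon V_r\to U_R$ smooth. If $s\circ P|_{V_r}$ is constant, then $P|_{V_r}$ is constant by the injectivity of $s$. Otherwise, fix any $m\in X$; applying $\ev_m$ and using (ii) gives $P|_{V_r}=(\ev_m\circ R)\circ f$. Condition (E) for $R$, applied at the point $f(r)\in U_R$ and at $m$, provides an open neighborhood $W$ of $f(r)$ in $U_R$ with $\ev_m\circ R|_W\in\mathcal{G}$; restricting to the open neighborhood $f^{-1}(W)\cap V_r$ of $r$ then writes $P$ locally as the composite of $\ev_m\circ R|_W\in\mathcal{G}$ with the smooth map $f$. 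Hence $P\in\langle\mathcal{G}\rangle$, again by locality.

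I do not expect a real obstacle: the proof is two-sided local-factorization bookkeeping built on identities (i) and (ii) and the locality axiom. The one subtlety is that in the reverse implication condition (E) must be invoked for $R$ at the base point $f(r)$, not at $r$, with the resulting neighborhood pulled back along $f$; the rest is routine use of the compatibility and locality axioms.
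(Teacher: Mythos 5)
Your proof is correct, and the key mechanism is the same as the paper's: the identity $\ev_m\circ s=\id_Y$ together with condition (E) is what makes both directions work. The difference is in packaging. For smoothness of $s$, the paper factors $s=u^*\circ q$ through $(C^\infty(*,Y),\langle\mathcal{F}^{*Y}_{\mathcal{G}}\rangle)$ and invokes Lemmas \ref{lem:C}(1) and \ref{lem:a_point}, whereas you verify directly that $s\circ Q\in\mathcal{F}^{XY}_{\mathcal{G}}$ for each $Q\in\mathcal{G}$ (via $\ad(s\circ Q)=Q\circ\pr_1$ and $\ev_m\circ s\circ Q=Q$) and then propagate through local factorizations; your route is more self-contained but reproves in this special case what the earlier lemmas already supply. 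For the converse, the paper compresses the argument into the assertion that $\ev_x\colon(C^\infty(X,Y),\D')\to(Y,\langle\mathcal{G}\rangle)$ is smooth and writes $Q=\ev_x\circ s\circ Q$, while you unwind exactly why that evaluation lands in $\langle\mathcal{G}\rangle$ — namely by invoking condition (E) for the generating plot $R$ at the point $f(r)$ and pulling the resulting neighborhood back along $f$ — which is the correct justification and a detail the paper leaves implicit. Your standing hypothesis $X\neq\emptyset$ is also implicitly needed in the paper's argument, so flagging it is reasonable.
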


\begin{proof} 
The map $q \colon Y \to (C^\infty(*, Y), \D_\text{func})$ defined by $q(y)(*) = y$ is smooth. 
By Lemma \ref{lem:C}, we see that the trivial map $u \colon X\to *$ induces the smooth map 
$u^* \colon (C^\infty(*, Y), \langle\mathcal{F}^{*Y}_{\mathcal{G}}\rangle) \to (C^\infty(X, Y), \langle\mathcal{F}_{\mathcal{G}}\rangle)$. 
Since $s = u^*\circ q$, it follows from Lemma \ref{lem:a_point} that the section $s$ is smooth.

For a parametrization $Q \colon U \to Y$, suppose that $s\circ Q \in \D'=\langle \F_\mathcal{G} \rangle$. 
Since $\ev_x$ is smooth for each $x$, it follows that $Q =\ev_x\circ s\circ Q \in \langle \mathcal{G} \rangle$. We see that $s$ is an induction.  
\end{proof}

\begin{thm}\label{thm:D'}
Let $g$ be the weak Riemannian metric on $C^\infty(M, N)$ defined as above, where the diffeology is restricted to $\D'\coloneqq \langle \mathcal{F}_{\mathcal{G}}\rangle$. 
Then the metric $g$ is definite with respect to the  generating family $\mathcal{F}_{\mathcal{G}}$ in the sense of Definition \ref{defn:definiteness}. 
\end{thm}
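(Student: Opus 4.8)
The plan is to verify the definiteness condition from Definition~\ref{defn:definiteness} directly against the generating family $\mathcal{F}_{\mathcal{G}}$. That is, I must show that for every plot $P \in \mathcal{F}_{\mathcal{G}}$, the symmetric positive covariant $2$-tensor $g(P)$ is definite in the usual sense: if $g(P)_r(v,v) = 0$ for some $r \in U_P$ and $v \in T_rU_P$, then $v = 0$. Recall from \eqref{eq:metric_function} and Lemma~\ref{lem:Theta_simplified} that
\[
 g(P)_r(v,v) = \int_M g_N(\ev_m \circ P)_r(v,v)\,\vol_M.
\]
Since $g_N$ is a weak Riemannian metric on $N$, each integrand $g_N(\ev_m \circ P)_r(v,v)$ is nonnegative, and the integrand depends smoothly (hence continuously) on $m$. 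Therefore the vanishing of the integral forces $g_N(\ev_m \circ P)_r(v,v) = 0$ for \emph{every} $m \in M$ (not just almost everywhere, by continuity of the integrand in $m$ combined with $\vol_M$ being a volume form, so its support is all of $M$).

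Next I would invoke the condition (E) defining $\mathcal{F}_{\mathcal{G}}$: fixing any $m \in M$, there is an open neighborhood $W_{r,m}$ of $r$ in $U_P$ such that $\ev_m \circ P|_{W_{r,m}} \in \mathcal{G}$. Since $\mathcal{G}$ is the generating family witnessing the definiteness of $g_N$, the $2$-tensor $g_N(\ev_m \circ P|_{W_{r,m}}) = g_N(\ev_m \circ P)|_{W_{r,m}}$ is definite in the usual sense. Combined with the vanishing $g_N(\ev_m \circ P)_r(v,v) = 0$ obtained in the previous step, definiteness of this $2$-tensor at the point $r$ yields $v = 0$. This completes the argument, and I would conclude by noting that $\mathcal{F}_{\mathcal{G}}$ is a generating family of $\D' = \langle \mathcal{F}_{\mathcal{G}}\rangle$ by its very definition, so Definition~\ref{defn:definiteness} is satisfied.

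The main subtlety — which I expect to be the only real point requiring care — is the passage from "the integral over $M$ vanishes" to "the integrand vanishes at every $m$." One must be careful that $g_N(\ev_m \circ P)_r(v,v)$ is genuinely continuous in $m$ and pointwise nonnegative, so that a standard argument (a nonnegative continuous function with zero integral against a strictly positive density on a manifold vanishes identically) applies; the smoothness of $\ad(P)$ as a plot of the functional diffeology, together with smoothness of $g_N$, gives exactly this. A secondary technical point is that one only needs the conclusion $v = 0$ at the single point $r$, so it suffices to know that the restricted tensor $g_N(\ev_m \circ P)|_{W_{r,m}}$ is definite at $r$, which is immediate from $\ev_m \circ P|_{W_{r,m}} \in \mathcal{G}$ and the hypothesis that $g_N$ is definite with respect to $\mathcal{G}$. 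I would present the proof in roughly four sentences, citing Lemma~\ref{lem:Theta_simplified} for the reduction of the integrand and Definition~\ref{defn:definiteness} (applied to $g_N$ and $\mathcal{G}$) for the final step.
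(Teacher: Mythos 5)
Your proposal is correct and follows essentially the same route as the paper: reduce $g(P)_r(v,v)=0$ via the integral formula and Lemma~\ref{lem:Theta_simplified} to the pointwise vanishing $g_N(\ev_m\circ P)_r(v,v)=0$ for all $m$, then invoke condition (E) and the definiteness of $g_N$ with respect to $\mathcal{G}$ to conclude $v=0$. In fact you spell out the continuity/positivity argument for passing from a vanishing integral to a vanishing integrand, which the paper's proof states only implicitly.
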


\begin{proof}
The inclusion of diffeologies gives rise to a natural transformation $\iota\colon \D' \Longrightarrow \D_{\text{func}}$. 
By combining $\iota$ with the metric $g$ and applying Proposition \ref{prop:one-to-one}, 
we have a weak Riemannian metric on $C^\infty(M, N)$; see the diagram (\ref{eq:R-metric_g}).

We show the definiteness of the metric. 
For a plot $P$ in the generating family $\mathcal{F}_{\mathcal{G}}$ of $\D'$, 
assume that $g(P)_r(u, u) = 0$, where $u\in T_rU_p$. 
Since $g_N$ is positive, we see that $g_N(\ev_m\circ P)_r(u,u)=0$ for each $m \in M$. 
Since $P$ satisfies the condition (E), by restricting $U_P$ to $W_{r, m}$, the map $\ev_m\circ P$ is in $\mathcal{G}$. 
The definiteness of $g_N$ enables us to conclude that $u =0$. 
This completes the proof. 
\end{proof}

We recall the section $s\colon (N, \langle \mathcal{G}  \rangle) \to (C^\infty(M, N), \D')$ in Lemma \ref{lem:an_induction}.

\begin{prop}\label{prop:an_extension}
One has the commutative diagram
\[
 \xymatrix@C30pt@R20pt{
 T_2((C^\infty(M, N), \D')) \ar[r]^-g  & \R \\
 T_2((N, \langle \mathcal{G} \rangle)) \ar[u]^{s_*} \ar[ru]_(0.6){\ \ \ \ (\int_M\mathrm{vol}_M) \times g_N} & 
 }
\]
\end{prop}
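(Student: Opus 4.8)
The plan is to trace an arbitrary element of $T_2((N,\langle\mathcal{G}\rangle))$ through $s_*$ and compute the value of $g$ on it, showing it equals $\bigl(\int_M\mathrm{vol}_M\bigr)\,g_N$ evaluated on the same data. First I would pick a plot $P\in\langle\mathcal{G}\rangle$ of $N$ and a representative $(r,v,w)\in U_P\times\R^{\dim U_P}\times\R^{\dim U_P}$, so that $[r,v,w]_P$ is a typical element of $T_2((N,\langle\mathcal{G}\rangle))$; by the colimit description these generate everything. The key observation is that $s_*[r,v,w]_P=[r,v,w]_{s\circ P}$, where $s\circ P\colon U_P\to C^\infty(M,N)$ is a plot of $(C^\infty(M,N),\D')$. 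Here one must check that $s\circ P$ genuinely lies in $\D'=\langle\mathcal{F}_{\mathcal{G}}\rangle$, which follows from Lemma~\ref{lem:an_induction} (the section $s$ is smooth into $\D'$), or more concretely from the fact that $\ev_m\circ(s\circ P)=P$ for every $m\in M$, so $s\circ P$ satisfies the condition (E) with respect to $\mathcal{G}$ on all of $U_P$.

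Next I would unwind the definition of $g$ on $C^\infty(M,N)$ from \eqref{eq:metric_function} applied to the plot $s\circ P$. Using Lemma~\ref{lem:Theta_simplified}, we have
\[
 g(s\circ P)_r(v,w)=\int_M g_N(\ev_m\circ(s\circ P))_r(v,w)\,\mathrm{vol}_M
 =\int_M g_N(P)_r(v,w)\,\mathrm{vol}_M,
\]
since $\ev_m\circ s\circ P=P$ independently of $m$. The integrand is now constant in $m$, so the integral factors as $\bigl(\int_M\mathrm{vol}_M\bigr)\cdot g_N(P)_r(v,w)$, which is precisely the value of the metric $\bigl(\int_M\mathrm{vol}_M\bigr)\times g_N$ on the element $[r,v,w]_P\in T_2((N,\langle\mathcal{G}\rangle))$. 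This establishes commutativity on representatives, and by the universal property of the colimit defining $T_2((N,\langle\mathcal{G}\rangle))$ — together with the already-established smoothness of all maps involved — the diagram commutes.

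The only genuine subtlety, and the step I would be most careful about, is the identification $s_*[r,v,w]_P=[r,v,w]_{s\circ P}$ and the verification that $s\circ P$ is an \emph{admissible} plot for the diffeology $\D'$ (not merely for $\D_{\text{func}}$); otherwise the formula \eqref{eq:metric_function} for $g$ restricted to $\D'$ would not directly apply. This is handled by Lemma~\ref{lem:an_induction}. A secondary point is that $g_N(P)$ makes sense for $P\in\langle\mathcal{G}\rangle$ because $g_N$ is a weak Riemannian metric on $N$ and plots in $\langle\mathcal{G}\rangle$ are in particular plots of $N$; the naturality of $\Theta_{g_N}$ under coordinate changes (checked just before Proposition~\ref{prop:distance_on_Func}) guarantees the whole assignment is a well-defined covariant $2$-tensor, so passing to the colimit is legitimate. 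No separation or definiteness hypotheses are needed for this proposition.
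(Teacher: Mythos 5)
Your proposal is correct and takes essentially the same route as the paper: the paper's proof is a one-line appeal to Lemma~\ref{lem:Theta_simplified} and the definition \eqref{eq:metric_function} of $g$, which is precisely the computation you carry out in detail via $\ev_m\circ s\circ P=P$ and the resulting constancy of the integrand. Your additional care about $s\circ P$ lying in $\D'$ (via Lemma~\ref{lem:an_induction}) is a reasonable elaboration of what the paper leaves implicit.
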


\begin{proof}
This result follows from Lemma \ref{lem:Theta_simplified} and  the definition of the metric $g$; see (\ref{eq:metric_function}). 
\end{proof}

\begin{ex}\label{ex:mapping_spaces_pushout}
Let  $(N, \langle \mathcal{G} \rangle)$ be a  weak Riemannian diffeological space whose metric  is definite with respect to $\mathcal{G}$. 
For example, we can choose the adjunction space in Theorem \ref{thm:adjunction_g} as such a diffeological space. 
Let $M$ and $M'$ be closed orientable Riemannian manifolds with $\mathrm{vol}(M) =\mathrm{vol}(M')$. 
Then, 
Lemma \ref{lem:an_induction}, Proposition \ref{prop:an_extension} and Theorem \ref{thm:adjunction_g} allows us to obtain a definite weak Riemannian diffeological space of the form $C^\infty(M, N)\coprod_NC^\infty(M', N)$.  
\end{ex}

\subsection{An example: free loop spaces}\label{ss:free_loop_space}
For a weak Riemannian diffeological space $N$, let the free loop space $LN\coloneqq C^\infty(S^1, N)$ be endowed with the functional diffeology.

We regard $S^1=[0,2\pi]/(0\sim 2\pi)$ and define $S^1\vee S^1$ as the pushout of the diagram
\begin{equation}\label{eq:the_Pushout}
\begin{split}
 \xymatrix@C25pt@R15pt{ 
 \ast\ar[r]^-{*\mapsto 0}\ar[d]_-{*\mapsto 0} & S^1\ar[d]^-i \\
 S^1\ar[r]_-j & S^1\vee S^1.
 }
\end{split}
\end{equation}
Let $S^1_\star\subset S^1\vee S^1$ ($\star=\text{left},\text{right}$) be the image of the first / second copy of $S^1$ 
via the subduction $\pi \colon S^1\coprod S^1\to S^1\vee S^1$. 
The inclusions $i_\star \colon S^1_\star \to S^1\coprod  S^1$ give the smooth map $\eta \colon C^\infty(S^1\coprod S^1, N) \to LN\times LN$ defined by 
$\eta(f) = (f\circ i_{\text{left}}, f\circ i_{\text{right}})$. 
It is readily seen that $\eta$ is a diffeomorphism.

Moreover, 
the subduction $\pi$ gives rise to an induction $\pi^* \colon C^\infty(S^1\vee S^1, N) \to C^\infty(S^1\coprod S^1, N)$. 
This follows from the definition of functional diffeology and the fact that the product preserve a subduction. 
We will see in Section \ref{sect:warped_products} that a weak Riemannian metric $g\oplus g$ is induced on the product space $LN\times LN$, 
and by Proposition~\ref{prop:An_induction} the metric $g\oplus g$ restricts to that on $C^{\infty}(S^1\vee S^1,N)$. The restricted metric is denoted by $g_\vee$.

Fix a monotonically increasing smooth function $b\colon\R\to[0,2\pi]$ (in a weak sense) satisfying
$b(s)=0$ if $s\le\pi/4$ and 
$b(s)=2\pi$ if $s\ge 3\pi/4$.
Define $p\colon S^1\to S^1\vee S^1$ by
\[
 p(\theta)\coloneqq
 \begin{cases}
  b(\theta)\in S^1_{\text{left}} & 0\le s\le \pi, \\
  b(\theta-\pi) \in S^1_{\text{right}} & \pi\le s\le 2\pi.
 \end{cases}
\]
Then $p$ is smooth.  
Therefore, we have a smooth map 
\begin{equation}\label{eq:concatenation} 
 c\coloneqq p^* \colon  C^{\infty}(S^1\vee S^1,N)\to C^{\infty}(S^1,N)
\end{equation} 
which is called the \textit{concatenation map}.

\begin{prop}\label{prop:concatenation_preserves_metric}
The map $c$ preserves the metrics; that is, $c^*g = g_\vee$. 
\end{prop}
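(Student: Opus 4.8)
The plan is to unwind both metrics to statements about the integrand $g_N(\ev_m \circ (\text{--}))$ and then compare them pointwise on $S^1$. Write $c = p^*$ with $p\colon S^1 \to S^1\vee S^1$ as defined, and recall from Proposition~\ref{prop:an_extension} and Lemma~\ref{lem:Theta_simplified} that for any plot $P$ of $C^\infty(S^1\vee S^1, N)$ (with respect to the relevant diffeology) and tangent vectors $v,w \in T_rU_P$, the metric $g_\vee$ is computed as
\[
 g_\vee(P)_r(v,w) = \int_{S^1} g_N(\ev_\theta \circ P)_r(v,w)\,\vol_{S^1},
\]
since $g_\vee$ is the restriction along the induction $\pi^*$ of the product metric $g\oplus g$ on $LN\times LN$, and the product metric is itself given by the integral formula \eqref{eq:metric_function} on each factor. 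The same formula computes $g$ on $C^\infty(S^1,N)$. So for the pullback $c^*g$ and a plot $P$ of $C^\infty(S^1\vee S^1,N)$ we get, using $\ev_\theta \circ c = \ev_\theta \circ p^* = \ev_{p(\theta)}$,
\[
 (c^*g)(P)_r(v,w) = g(c\circ P)_r(v,w) = \int_{S^1} g_N(\ev_\theta \circ (c\circ P))_r(v,w)\,\vol_{S^1} = \int_{S^1} g_N(\ev_{p(\theta)}\circ P)_r(v,w)\,\vol_{S^1}.
\]
Thus the claim $c^*g = g_\vee$ reduces to the identity
\[
 \int_{S^1} g_N(\ev_{p(\theta)}\circ P)_r(v,w)\,\vol_{S^1} = \int_{S^1} g_N(\ev_\phi \circ P)_r(v,w)\,\vol_{S^1}
\]
for every plot $P$, i.e.\ to a change-of-variables statement: pushing forward the volume form on the source $S^1$ along $p$ should give back the volume form on the target $S^1\vee S^1$ (interpreted as the sum of the volume forms on the two wedge summands, since $C^\infty(S^1\vee S^1,N) \cong LN\times LN$ and the product metric integrates over each copy of $S^1$ separately).

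The key step is therefore to verify this change of variables. Here I would use the explicit description of $p$: on $[0,\pi]$ it is $\theta \mapsto b(\theta) \in S^1_{\text{left}}$ with $b$ monotonically nondecreasing from $0$ to $2\pi$, and on $[\pi,2\pi]$ it is $\theta \mapsto b(\theta-\pi) \in S^1_{\text{right}}$, likewise sweeping out the second copy once. Splitting $\int_{S^1} = \int_0^\pi + \int_\pi^{2\pi}$ on the source and applying the one-variable substitution $\phi = b(\theta)$ (respectively $\phi = b(\theta-\pi)$) to each piece, the integral $\int_0^\pi F(b(\theta))\, b'(\theta)\,d\theta = \int_0^{2\pi} F(\phi)\,d\phi$ recovers the integral over $S^1_{\text{left}}$, and similarly for the right summand; here $F$ is the function $\phi \mapsto g_N(\ev_\phi \circ P)_r(v,w)$, which is smooth in $\phi$ because $\ad(P)$ is smooth and $g_N$ is a weak Riemannian metric. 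One must be slightly careful that the volume form $\vol_{S^1}$ appearing in \eqref{eq:metric_function} is a \emph{fixed} choice; I would take it to be $d\theta$ on each copy of $S^1$ in the wedge and $d\phi$ on the target, in which case the Jacobian factor $b'(\theta)$ is exactly what the substitution produces. (If one prefers to allow a general volume form, the statement is really that $p$ has degree one onto each wedge summand, so $p_*[\vol_{S^1}] $ represents the correct class; but for the metric identity the pointwise substitution with $d\theta$ suffices.)

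I expect the main obstacle to be purely bookkeeping: one has to be honest about \emph{which} diffeology is used on $C^\infty(S^1\vee S^1,N)$ so that $g_\vee$ is well-defined and so that $c = p^*$ lands in the right space. Since $g_\vee$ is defined as the pullback along the induction $\pi^* \colon C^\infty(S^1\vee S^1,N) \to C^\infty(S^1\coprod S^1,N) \cong LN\times LN$ of the product metric (via Proposition~\ref{prop:An_induction}), and $c\colon C^\infty(S^1\vee S^1,N) \to LN$ is smooth with respect to the functional diffeology on $LN$, the pullback $c^*g$ is automatically a weak Riemannian metric on $C^\infty(S^1\vee S^1,N)$ by the remark following Definition~\ref{defn:Pullback}; it then suffices to check the equality of the two $2$-tensors on every plot $P$ in a generating family, e.g.\ plots of the form $\pi^*$ applied to plots of $LN\times LN$, for which $\ev_\phi \circ P$ is manifestly a plot of $N$ and the integral formula above applies verbatim. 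Modulo tracking these identifications, the proof is exactly the one-variable change of variables described above, and the substitution $\phi = b(\theta)$ on each half of $S^1$ is the only computation. The smoothness of $b$ and its boundary behavior ($b\equiv 0$ near $\pi/4$, $b\equiv 2\pi$ near $3\pi/4$) guarantee that $p$, hence $c$, is smooth and that the substitution has no boundary issues at the wedge point.
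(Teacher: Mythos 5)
Your proposal follows the same route as the paper's proof: reduce both sides to the integral formula \eqref{eq:metric_function} via Lemma~\ref{lem:Theta_simplified}, use $\ev_\theta\circ(c\circ P)=\ev_{p(\theta)}\circ P$, split $\int_{S^1}=\int_0^\pi+\int_\pi^{2\pi}$, and match each half with the integral over one wedge summand. You have also correctly isolated the crux, the change of variables $\phi=b(\theta)$. But the justification you give for that step does not work, and this is a genuine gap. The integral produced by $c^*g$ on, say, the first half is
\[
\int_0^{\pi} F(b(\theta))\,d\theta,\qquad F(\phi)\coloneqq g_N(\ev_{\phi}\circ P_{\text{left}})_r(v,w),
\]
because the quantity $g_N(\ev_{p(\theta)}\circ P)_r(v,w)$ depends on $\theta$ only through the point $p(\theta)$ (the differentiation in Lemma~\ref{lem:Theta_simplified} is in the $U_P$-direction, so no chain-rule factor in $\theta$ appears), and the measure is the fixed volume form $d\theta$ of the \emph{source} circle. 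The substitution $\phi=b(\theta)$ identifies $\int_0^{2\pi}F(\phi)\,d\phi$ with $\int_0^{\pi}F(b(\theta))\,b'(\theta)\,d\theta$, which is a different integral: the Jacobian $b'(\theta)$ is simply not present in the integrand you actually have. Your remark that ``the Jacobian factor $b'(\theta)$ is exactly what the substitution produces'' verifies the wrong identity; choosing $\vol_{S^1}=d\theta$ does not insert the Jacobian, it omits it.

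That the two integrals genuinely differ is visible already on the plot of constant maps: take $N=\R$ with the standard metric and $P\colon\R\to C^\infty(S^1\vee S^1,\R)$ with $P(r)$ the constant map at $r$. Then $g_N(\ev_m\circ P)_r(1,1)=1$ for every $m$, so $(c^*g)(P)_r(1,1)=\int_0^{2\pi}1\,d\theta=2\pi$, while $g_\vee(P)_r(1,1)=g(P_{\text{left}})_r(1,1)+g(P_{\text{right}})_r(1,1)=2\pi+2\pi=4\pi$. (Already $F\equiv 1$ gives $\int_0^{\pi}F(b(\theta))\,d\theta=\pi\neq 2\pi=\int_0^{2\pi}F(\phi)\,d\phi$, and since $b$ is constant near the endpoints the discrepancy is not even a uniform scalar.) To make the argument close, one would have to integrate against $p^*(d\phi)=b'(\theta)\,d\theta$ rather than $d\theta$ in the definition of $g$ on $LN$, i.e.\ replace the fixed volume form on the source circle by a degenerate one adapted to $p$. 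I should add that the paper's own proof performs the same reparametrization silently in the third equality of its display, so you have reproduced the intended argument faithfully; but as written, both arguments need the Jacobian at exactly this point, and it is not there.
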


\begin{proof}
For a plot $P\colon U_P\to C^{\infty}(S^1\vee S^1,N)$, 
we define a plot $P_\star \colon U_P\to C^{\infty}(S^1,N)$ ($\star=\text{left},\text{right}$) by $P_\star = (i_\star)^*\circ P$. 
Then
\[
 (\ev_{\theta}\circ(c\circ P))(r) = (\ev_{p(\theta)}\circ P)(r) = P(r)(p(\theta))=
 \begin{cases}
  P(r)_{\text{left}}(b(\theta))  & 0\le\theta\le\pi,\\
  P(r)_{\text{right}}(b(\theta-\pi)) & \pi\le\theta\le 2\pi.
 \end{cases}
\]
Thus for $v,w\in T_rU_P$,
\begin{align*}
 &(c^*g)(P)_r(v,w)= g(c\circ P)_r(v, w) \\
 & \quad = \int_0^{\pi}g_M(\ev_{p(\theta)}\circ P)_r(v,w)d\theta
   +\int_{\pi}^{2\pi}g_M(\ev_{p(\theta)}\circ P)_r(v,w)d\theta\\
  &\quad = \int_{S^1_{\text{left}}}g_M(\ev_{\theta}\circ P_{\text{left}})_r(v,w)d\theta
   +\int_{S^1_{\text{right}}}g_M(\ev_{\theta}\circ P_{\text{right}})_r(v,w)d\theta \\
  &\quad =g(P_{\text{left}})_r(v,w)+g(P_{\text{right}})_r(v,w)=g_{\vee}(P)_r(v,w). 
\end{align*}
Observe that the second equality follows from Lemma \ref{lem:Theta_simplified}. 
\end{proof}

Suppose that $N$ admits a definite weak Riemannian metric $g$ with respect to a generating family $\mathcal{G}$ of the diffeology of $N$. 
Lemma \ref{lem:C} enables us to deduce that the concatenation map \eqref{eq:concatenation} is restricted to the smooth map 
\[
 c \colon  (C^{\infty}(S^1\vee S^1,N), \langle \mathcal{F}_\mathcal{G}^{S^1\vee S^1N}\rangle)\to (C^{\infty}(S^1,N), \langle \mathcal{F}_\mathcal{G}^{S^1N}\rangle). 
\]
The proof of  Lemma \ref{lem:C} implies that that $c\circ P$ is in $\mathcal{F}_\mathcal{G}^{S^1N}$ for each $P \in \mathcal{F}_\mathcal{G}^{S^1\vee S^1N}$.  
Moreover, the proof of  Proposition \ref{prop:concatenation_preserves_metric} allows us to deduce that 
$g_{\vee}(P)_r(v,v) = g(c\circ P)_r(v, v)$ for each plot 
$P \in \mathcal{F}_\mathcal{G}^{S^1\vee S^1N}$ and $v \in T_rU_P$. 
By virtue of Theorem \ref{thm:D'}, we have the following result.

\begin{prop}
The metric $g_\vee$ on  $(C^{\infty}(S^1\vee S^1,N), \langle \mathcal{F}_\mathcal{G}^{S^1\vee S^1N}\rangle)$ is definite. 
\end{prop}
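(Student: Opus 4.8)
The plan is to take $\mathcal{F}_\mathcal{G}^{S^1\vee S^1N}$ itself as the generating family witnessing definiteness and to transport the definiteness of $g$ on $(C^{\infty}(S^1,N),\langle\mathcal{F}_\mathcal{G}^{S^1N}\rangle)$ — which is Theorem~\ref{thm:D'} applied to the closed orientable manifold $S^1$ — along the concatenation map $c$. First I would recall that $g_\vee$ is indeed a weak Riemannian metric on $(C^{\infty}(S^1\vee S^1,N),\langle\mathcal{F}_\mathcal{G}^{S^1\vee S^1N}\rangle)$, being the restriction of $g\oplus g$ along the inclusion of diffeologies $\langle\mathcal{F}_\mathcal{G}^{S^1\vee S^1N}\rangle\hookrightarrow\mathcal{D}_{\text{func}}$ exactly as in the proof of Theorem~\ref{thm:D'}; in particular symmetry and positivity of $g_\vee(P)$ are automatic. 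Since $\langle\mathcal{F}_\mathcal{G}^{S^1\vee S^1N}\rangle$ is by construction generated by $\mathcal{F}_\mathcal{G}^{S^1\vee S^1N}$, by Definition~\ref{defn:definiteness} it then suffices to verify that $g_\vee(P)$ is a definite $2$-tensor for every $P\in\mathcal{F}_\mathcal{G}^{S^1\vee S^1N}$.

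So I would fix such a $P$, a point $r\in U_P$, and a tangent vector $v\in T_rU_P$ with $g_\vee(P)_r(v,v)=0$, and deduce $v=0$. The first step is the identity $g_\vee(P)_r(v,v)=g(c\circ P)_r(v,v)$, which is precisely the diagonal case of the computation in the proof of Proposition~\ref{prop:concatenation_preserves_metric} (and is recorded in the paragraph preceding the statement); hence $g(c\circ P)_r(v,v)=0$. The second step is to observe that $c\circ P\in\mathcal{F}_\mathcal{G}^{S^1N}$: by Lemma~\ref{lem:C}(1) and its proof, using $\ev_\theta\circ(c\circ P)=\ev_{p(\theta)}\circ P$ together with the fact that $P$ satisfies condition (E) with respect to $\mathcal{G}$, the plot $c\circ P$ again satisfies condition (E), so it lies in $\mathcal{F}_\mathcal{G}^{S^1N}$. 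The third step is to invoke Theorem~\ref{thm:D'} for $S^1$, which tells us that $g$ on $(C^{\infty}(S^1,N),\langle\mathcal{F}_\mathcal{G}^{S^1N}\rangle)$ is definite with respect to $\mathcal{F}_\mathcal{G}^{S^1N}$; therefore $g(c\circ P)$ is a definite symmetric positive $2$-tensor on $U_P$, and $g(c\circ P)_r(v,v)=0$ forces $v=0$. This proves the definiteness of $g_\vee(P)$, and hence of $g_\vee$ with respect to $\mathcal{F}_\mathcal{G}^{S^1\vee S^1N}$.

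I do not expect a genuine obstacle: all the substantive work has already been carried out in Proposition~\ref{prop:concatenation_preserves_metric}, Lemma~\ref{lem:C}, and Theorem~\ref{thm:D'}. The only point requiring minor care is bookkeeping — namely that the polarized equality $c^*g=g_\vee$ restricts on the diagonal to $g_\vee(P)_r(v,v)=g(c\circ P)_r(v,v)$ for the \emph{specific} plots $P\in\mathcal{F}_\mathcal{G}^{S^1\vee S^1N}$, and that $c$ genuinely maps $\mathcal{F}_\mathcal{G}^{S^1\vee S^1N}$ into $\mathcal{F}_\mathcal{G}^{S^1N}$ rather than merely into $\mathcal{D}_{\text{func}}$. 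Both of these are immediate from the proofs cited above, so the argument is essentially a two-line reduction to Theorem~\ref{thm:D'}.
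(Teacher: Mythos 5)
Your argument is correct and is essentially identical to the paper's: both reduce definiteness to the three facts that $c\circ P\in\mathcal{F}_\mathcal{G}^{S^1N}$ for $P\in\mathcal{F}_\mathcal{G}^{S^1\vee S^1N}$ (via Lemma~\ref{lem:C}), that $g_\vee(P)_r(v,v)=g(c\circ P)_r(v,v)$ (via the computation in Proposition~\ref{prop:concatenation_preserves_metric}), and that $g$ is definite with respect to $\mathcal{F}_\mathcal{G}^{S^1N}$ by Theorem~\ref{thm:D'}. No gaps.
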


\section{The warped product of Riemannian diffeological spaces}\label{sect:warped_products}
This section introduces the warped product in diffeology. 
Given weak Riemannian diffeological spaces $(X, g_X)$ and $(Y, g_Y)$, we consider $T_2(X)$, $T_2(Y)$ and $T_2(X \times Y)$. 
Let $f \colon X \to \mathbb{R}$ be a positive smooth map, $\pi_1 \colon T_2(X \times Y) \to T_2(X)$ and $\pi_2\colon T_2(X\times Y)\to T_2(Y)$ the projections. 
Then, we see that the map
\[
 g_X\circ\pi_1 + (f \circ \rho \circ \pi_1) \cdot (g_Y\circ\pi_2) \colon T_2(X) \times T_2(Y) \to \mathbb{R} 
\]
is smooth on $T_2(X)\times T_2(Y)$, where 
$\rho \colon T_2(X) \to X$ is the natural map.
Moreover, 
by the universality of the product, 
there exists a smooth map $i \colon T_2(X \times Y) \to T_2(X) \times T_2(Y)$.
We define a map $g_{X \times_f Y} \colon T_2(X \times Y) \to \mathbb{R}$ by
\[
 g_{X \times_f Y} = (g_X \circ \pi_1 + (f \circ \rho \circ \pi_1 )\cdot (g_Y\circ\pi_2)) \circ i.
\]
It is readily seen that, by the definition, $g_{X \times_f Y}$ is a weak Riemannian metric on $X \times Y$. 
We shall call $g_{X \times_f Y}$ the \textit{warped product} of weak Riemannian metrics $g_X$ and $g_Y$ with respect to $f$.

\begin{rem} 
The natural map $i \colon T_2(X \times Y) \to T_2(X) \times T_2(Y)$ mentioned above is a diffeomorphism; see \cite[Proposition 2.2.12]{B24}.
\end{rem}

\begin{prop}\label{prop:warp}
If $g_X$ and $g_Y$ are definite, then so is $g_{X \times_f Y}$. 
\end{prop}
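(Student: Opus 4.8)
The plan is to produce, from generating families witnessing the definiteness of $g_X$ and $g_Y$, an explicit generating family of the product diffeology $\D^{X\times Y}$ for which the warped metric $g_{X\times_f Y}$ is definite. First I would fix generating families $\mathcal{G}_X$ of $\D^X$ and $\mathcal{G}_Y$ of $\D^Y$ with $g_X(P)$ definite for every $P\in\mathcal{G}_X$ and $g_Y(Q)$ definite for every $Q\in\mathcal{G}_Y$, as supplied by Definition~\ref{defn:definiteness}. Since the definition of $\langle\text{--}\rangle$ already permits locally constant plots, adjoining the zero-dimensional constant plots $c_a\colon\R^0\to X$ ($a\in X$) and $c_b\colon\R^0\to Y$ ($b\in Y$) changes neither the generated diffeologies nor the definiteness condition; write $\widetilde{\mathcal{G}}_X=\mathcal{G}_X\cup\{c_a\}_{a\in X}$ and $\widetilde{\mathcal{G}}_Y=\mathcal{G}_Y\cup\{c_b\}_{b\in Y}$. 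I then claim that
\[
 \mathcal{G}\coloneqq\bigl\{(P,Q)\colon U_P\times U_Q\to X\times Y \ \bigm|\ P\in\widetilde{\mathcal{G}}_X,\ Q\in\widetilde{\mathcal{G}}_Y\bigr\}
\]
generates $\D^{X\times Y}$: for a plot $R$ of $X\times Y$ and a point $r$ of its domain, $\pi_X\circ R$ is, near $r$, either locally constant --- hence of the form $c_a\circ f$ with $f$ landing in $\R^0$ --- or of the form $P\circ f$ with $P\in\mathcal{G}_X$; treating $\pi_Y\circ R$ similarly and intersecting the two neighborhoods exhibits $R$ locally as $(P',Q')\circ(f,h)$ with $(P',Q')\in\mathcal{G}$. (This enlargement is genuinely needed: a generating family may omit a point of $X$, so $\mathcal{G}_X\times\mathcal{G}_Y$ alone need not generate $\D^{X\times Y}$.)

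Next I would compute, for $(P,Q)\in\mathcal{G}$, the covariant $2$-tensor $g_{X\times_f Y}((P,Q))$ on $U_P\times U_Q$. Using the action of $T_2$ on morphisms and the fact that $i\colon T_2(X\times Y)\to T_2(X)\times T_2(Y)$ is assembled from $T_2(\pi_X)$ and $T_2(\pi_Y)$, one checks that $i$ carries $[\,(x,y),(v_1,w_1),(v_2,w_2)\,]_{(P,Q)}$ to $\bigl([\,x,v_1,v_2\,]_P,\,[\,y,w_1,w_2\,]_Q\bigr)$, and hence, unwinding the definition of the warped product,
\[
 g_{X\times_f Y}((P,Q))_{(x,y)}\bigl((v_1,w_1),(v_2,w_2)\bigr)=g_X(P)_x(v_1,v_2)+f(P(x))\cdot g_Y(Q)_y(w_1,w_2).
\]
Finally I would read off definiteness at each $(P,Q)\in\mathcal{G}$. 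When $P\in\mathcal{G}_X$ and $Q\in\mathcal{G}_Y$, the form $(v,w)\mapsto g_X(P)_x(v,v)+f(P(x))\,g_Y(Q)_y(w,w)$ is a sum of a positive-definite form in $v$ and a positive multiple (as $f>0$) of a positive-definite form in $w$, so it vanishes only at $(v,w)=(0,0)$; in the remaining cases one of $P,Q$ is a zero-dimensional constant plot, so the formula collapses to $g_X(P)$ on $U_P$, to $f(a)\,g_Y(Q)$ on $U_Q$, or to the (vacuously definite) zero form on $\R^0$. Thus $g_{X\times_f Y}$ is definite with respect to $\mathcal{G}$, as required.

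I expect the only delicate point to be the bookkeeping in the second step: justifying the formula for $i$ on the representatives $[\,(x,y),(v_1,w_1),(v_2,w_2)\,]_{(P,Q)}$. This rests on the identity $\pi_X\circ(P,Q)=P\circ\mathrm{pr}_1$, under which $T_2$ turns the reparametrization $\mathrm{pr}_1\colon U_P\times U_Q\to U_P$ into the linear projection discarding the $U_Q$-coordinates, together with the symmetric statement for $\pi_Y$; alternatively one may invoke the fact that $i$ is a diffeomorphism, as noted above, and track a point through the identification $T_2(X\times Y)\cong T_2(X)\times T_2(Y)$. Everything else is routine verification.
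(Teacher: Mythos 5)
Your proof is correct, and it follows the same overall strategy as the paper: exhibit a generating family of the product diffeology on which the warped tensor becomes a sum of a definite form and an $f$-weighted definite form, then read off definiteness. The difference lies in the choice of generating family, and yours is the more careful one. The paper takes $\mathcal{F}=\{P\colon U_P\to X\times Y\mid \pi_X\circ P\in\mathcal{G}_X,\ \pi_Y\circ P\in\mathcal{G}_Y\}$, which forces both projections of a single plot to lie in the given families \emph{with the same domain} $U_P$; this family can easily fail to generate the product diffeology (it is even empty if, say, $\mathcal{G}_X=\{\id_{\R}\}$ and $\mathcal{G}_Y=\{\id_{\R^2}\}$). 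Your family of external products $(P,Q)$ on $U_P\times U_Q$ avoids this, and your augmentation by the zero-dimensional constant plots correctly handles the branch of the definition of $\langle\text{--}\rangle$ in which one projection is locally constant while the other is not --- a case the paper's one-line argument glosses over. Your computation of $i$ on representatives via $\pi_X\circ(P,Q)=P\circ\mathrm{pr}_1$ and the resulting formula $g_{X\times_f Y}((P,Q))_{(x,y)}((v_1,w_1),(v_2,w_2))=g_X(P)_x(v_1,v_2)+f(P(x))\,g_Y(Q)_y(w_1,w_2)$ are also correct, and the final definiteness check (including the degenerate cases where one factor is a point plot) is sound. In short: same idea as the paper, but your realization of the generating family is the one that actually works in full generality.
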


\begin{proof}
Let ${\mathcal G}_X$ and ${\mathcal G}_Y$ be generating families of the diffeologies on $X$ and $Y$, respectively. 
Let $\pi_X \colon X \times Y \to X$ and $\pi_Y \colon X \times Y \to Y$ be the projections. 
Then, the product diffeology on $X \times Y$ is generated by the family
\begin{align*}
 {\mathcal F} \coloneqq \{P \colon U_P \to X \times Y \mid \pi_X \circ P \in {\mathcal G}_X, \pi_Y \circ P \in {\mathcal G}_Y\}.
\end{align*}
This implies that $g_{X \times_f Y}$ is definite with respect to ${\mathcal F}$.
\end{proof}

\begin{ex}
By applying Proposition  \ref{prop:An_induction}, we obtain examples of definite weak Riemannian diffeological spaces. 
\begin{enumerate}[label=(\roman*),leftmargin=*]
\item
	The induction $\boldsymbol{+}\to \R^2$ (Example~\ref{ex:+}) induces a definite weak Riemannian metric on $\boldsymbol{+}$.
\item
	Moreover, consider a pullback diagram 
	\[
	 \xymatrix@C25pt@R15pt{ 
	 Y\times_B X \ar[r] \ar[d]& X \ar[d] \\
	 Y \ar[r] & B
	 }
	\]
	in which $X$ and $Y$ are weak Riemannian diffeological spaces with definite metrics.  
	Then, by combining Proposition  \ref{prop:An_induction} with Proposition \ref{prop:warp}, 
	we see that the pullback $Y\times_B X$ admits a definite weak Riemannian metric.  
\end{enumerate}
\end{ex}

Let $N$ be a weak Riemannian diffeological space.  
We introduce another weak Riemannian metric on  $C^\infty (S^1\vee S^1, N)$ endowed with the functional diffeology.

We recall the smooth maps $i,j\colon S^1\to S^1\vee S^1$ in the pushout diagram (\ref{eq:the_Pushout}), 
where $S^1\vee S^1$ denotes the one point union of two circles; see Section \ref{ss:free_loop_space}. 
Then, we have a diagram of the form 
\begin{equation}\label{eq:A_pullback}
\begin{split}
 \xymatrix@C25pt@R15pt{ 
 C^\infty (S^1\vee S^1, N)\ar[r]^-{\widetilde{l}}&LN\times_N LN  \ar[d] \ar[r]^q & LN \times LN \ar[d]^{(\ev_0,\ev_1)} \\
  & N \ar[r]_-{\Delta} & N\times N, } 
\end{split}
\end{equation}
where the square is the pull-back of the diagonal map $\Delta$, and $\widetilde{l}$ is defined by $\widetilde{l}(\gamma) = (\gamma(*),\gamma\circ i, \gamma\circ j)$.

\begin{lem}\label{lem:l}
The map $\widetilde{l}$ is a well-defined diffeomorphism with respect to the functional diffeology. 
\end{lem}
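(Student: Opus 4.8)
The plan is to recognise $\widetilde{l}$ as the comparison map furnished by the universal property of the pushout (\ref{eq:the_Pushout}) applied to maps into $N$, and then to check smoothness in both directions by unwinding the functional, product and sub-diffeologies. First I would check that $\widetilde{l}$ is well defined: since $i$ and $j$ are smooth and $i(0)=j(0)$ is the wedge point, for any $\gamma\in C^\infty(S^1\vee S^1,N)$ the maps $\gamma\circ i$ and $\gamma\circ j$ are smooth loops through the common point $\gamma(*)\coloneqq\gamma(i(0))$, so $\widetilde{l}(\gamma)$ lies in the pullback $LN\times_N LN$ displayed in (\ref{eq:A_pullback}). Then, by the universal property of the pushout (\ref{eq:the_Pushout}) in $\mathsf{Diff}$, a smooth map $S^1\vee S^1\to N$ is precisely a pair of smooth maps $S^1\to N$ agreeing at $0$, the correspondence being $\gamma\mapsto(\gamma\circ i,\gamma\circ j)$; since $C^\infty(S^1,N)=LN$ and $C^\infty(*,N)=N$, this identifies the underlying set of $C^\infty(S^1\vee S^1,N)$ with that of $LN\times_N LN$, the identification being exactly $\widetilde{l}$. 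Hence $\widetilde{l}$ is bijective.

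To prove that $\widetilde{l}$ is smooth, I would use that, being a limit in $\mathsf{Diff}$, the space $LN\times_N LN$ carries the initial diffeology for the projection to $N$ and the map $q$ to $LN\times LN$. It then suffices to show that for every plot $P\colon U_P\to C^\infty(S^1\vee S^1,N)$ the three maps $r\mapsto P(r)(*)$, $r\mapsto P(r)\circ i$ and $r\mapsto P(r)\circ j$ are plots of $N$, $LN$ and $LN$ respectively: the first equals $r\mapsto\ad(P)(r,*)$, which is smooth since $\ad(P)$ is; the adjoints of the other two are $\ad(P)\circ(\id_{U_P}\times i)$ and $\ad(P)\circ(\id_{U_P}\times j)$, smooth because $\ad(P)$, $i$ and $j$ are. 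Thus $\widetilde{l}\circ P$ is a plot.

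The crux will be the smoothness of $\widetilde{l}^{-1}$. Let $R\colon U_R\to LN\times_N LN$ be a plot, with components $R_0\colon U_R\to N$ and $R_1,R_2\colon U_R\to LN$; lying in the pullback means that for each $r\in U_R$ the loops $R_1(r)$ and $R_2(r)$ are based at $R_0(r)$. I would need to show that the adjoint $\ad(\widetilde{l}^{-1}\circ R)\colon U_R\times(S^1\vee S^1)\to N$ is smooth. Since $U_R\times(-)$ is a left adjoint (with right adjoint $C^\infty(U_R,-)$, by cartesian closedness of $\mathsf{Diff}$), it preserves the pushout (\ref{eq:the_Pushout}), so $U_R\times(S^1\vee S^1)$ is the pushout $(U_R\times S^1)\coprod_{U_R}(U_R\times S^1)$, the two copies glued along $U_R\times\{0\}$. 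The adjoints $\ad(R_1),\ad(R_2)\colon U_R\times S^1\to N$ are smooth (as $R_1$ and $R_2$ are plots of $LN$) and agree on $U_R\times\{0\}$, both being $(r,0)\mapsto R_0(r)$; by the universal property of this pushout they glue to a smooth map $U_R\times(S^1\vee S^1)\to N$, which one checks is exactly $\ad(\widetilde{l}^{-1}\circ R)$. Hence $\widetilde{l}^{-1}\circ R$ is a plot, so $\widetilde{l}^{-1}$ is smooth and $\widetilde{l}$ is a diffeomorphism. The only substantive ingredient is the interchange $U_R\times(S^1\vee S^1)\cong(U_R\times S^1)\coprod_{U_R}(U_R\times S^1)$, which follows directly from cartesian closedness of $\mathsf{Diff}$; everything else is routine manipulation of adjunctions and of the explicit functional, product and sub-diffeologies.
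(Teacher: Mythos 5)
Your proposal is correct and follows essentially the same route as the paper: smoothness of $\widetilde{l}$ by checking components against the initial (limit) diffeology, bijectivity from the universal property of the pushout, and smoothness of the inverse by exploiting that the product with a fixed domain preserves the colimit presentation of $S^1\vee S^1$ so that the adjoints of the two loop components glue. The only cosmetic difference is that the paper packages the last step as ``$\id\times\pi$ is a subduction since products preserve subductions'' and works with the universal space $Z\cong LN\times_N LN$ once and for all, whereas you invoke cartesian closedness (left adjoints preserve pushouts) plot-by-plot; these are the same argument.
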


Since $LN\times_N LN$ is a diffeological subspace of $N\times (LN\times LN)$, 
it follows from Proposition \ref{prop:An_induction} that $C^\infty (S^1\vee S^1, N)$ admits a weak Riemannian metric $g_N\oplus(g\oplus g)$.

\begin{proof}[Proof of Lemma \ref{lem:l}.]
First, 
we see that
$LN \times_N LN = \{(n, \gamma_1, \gamma_2)\mid \gamma_1(0) = \gamma_2(0) = n\} \subset N \times LN \times LN$ 
and 
$Z = \{(\gamma_1, \gamma_2) \mid \gamma_1(0) = \gamma_2(0)\} \subset LN \times LN$ 
are diffeomorphic. 
This is because the smooth maps  
\[
 \pr_2 \times \pr_3 \colon LN \times_N LN \to Z,\quad \pr_2 \times \pr_3(n, \gamma_1, \gamma_2) = (\gamma_1, \gamma_2), \ \text{and} 
\]  
\[
(\ev_0 \circ \pr_1, \id_Z) \colon Z \to LN \times_N LN,\quad (\ev_0 \circ \pr_1, \id_Z)(\gamma_1, \gamma_2) = (\gamma_1(0), \gamma_1, \gamma_2),
\]  
are clearly inverses of each other. 
Under this identification, the map $\widetilde{l}$ is interpreted as the map 
\[
 l=i^*\times j^*\colon C^{\infty}(S^1\vee S^1,N)\xrightarrow{} Z,
 \quad
 \gamma\mapsto(\gamma\circ i,\gamma\circ j).
\]
Since $ i $ and $ j $ are smooth, 
it follows that the maps $i^*$ and $j^*$ are also smooth. 
Therefore, the map $ l = i^* \times j^* $, and hence $\widetilde{l}$, are also smooth.

Moreover, for any $ (\gamma_1, \gamma_2) \in Z $, by the universality of the product, there exists a map  
$\gamma \colon S^1 \vee S^1 \to N$
such that the diagram 
\begin{equation*}
 \xymatrix@C20pt@R15pt{
 S^1 \ar@/_5pt/[rd]_{\gamma_1}\ar[r]_-{i} & S^1 \vee S^1 \ar@{..>}[d]^{\exists! \, \gamma} & \ar[l]^-{j}\ar@/^5pt/[ld]^{\gamma_2} S^1 \\
 & N &
 }
\end{equation*}
commutes. 
Let 
$\nu \colon Z \to C^\infty(S^1 \vee S^1, N)$ 
be the map that assigns to each $ (\gamma_1, \gamma_2) \in Z$ its universal morphism $\gamma $. 
Conversely, given a map $\gamma \colon S^1 \vee S^1 \to N$, by the universal property, 
there exist unique maps $\gamma_1, \gamma_2 \colon S^1 \to N$ such that $\gamma_1 = \gamma \circ i$ and $\gamma_2 = \gamma \circ j$. 
Therefore, we see that $\nu$ and $l$ are inverse mappings of each other. 
We show the smoothness of $\nu$. 
Consider the adjoint $\ad(\nu) \colon  Z\times(S^1 \vee S^1)\to N$ to $\nu$. 
Then, we have a commutative diagram 
\begin{equation*}
 \xymatrix@C20pt@R15pt{
 Z \times (S^1 \coprod S^1) \ar@{^{(}->}[rr] \ar[dd]^{\id_{Z} \times \pi} && LN \times LN \times (S^1 \coprod S^1) \ar[d]^{\cong}\\
                                                   &&  LN \times LN \times S^1 \coprod LN \times LN \times S^1 \ar[d]^{\pr_1 \times \id_{S^1} \coprod\pr_2 \times \id_{S^1}} \\
 Z \times (S^1 \vee S^1) \ar[r]_-{\ad(\nu)}  & N & LN \times S^1 \coprod LN \times S^1. \ar[l]^-{\ev \coprod \ev}
 }
\end{equation*}
Observe that the quotient map $\pi \colon S^1 \coprod S^1 \to S^1 \vee S^1$ is a subduction and then $\id_Z\times \pi$ is also a subduction. 
Thus, $\nu \colon Z \to (C^\infty(S^1 \vee S^1, N), \D_{\mathrm{func}})$ is smooth. %
Hence, the map $l$ is a diffeomorphism with respect to the functional diffeology.
\end{proof}

\begin{rem}\label{rem:thePullback} 
We consider the weak Riemannian metric $g$ on $LN$ described in Section \ref{ss:metric_on_MappingSapces}. 
Observe that we do not require the definiteness of $g$.  
The definition of the metric $g_N\oplus(g\oplus g)$ on $LN\times_N LN$ seems natural but is different from that in \S\ref{ss:free_loop_space}.
When we measure the length of a given curve in $C^{\infty}(S^1\vee S^1,N)$ with the present metric, the length of the trajectory of the attaching point is added twice.
This reflects that the map from $S^1\vee S^1$ passes through the attaching point twice.
\end{rem}

\section{Problems}\label{sect:OpenQ} 
We propose questions on metrics on diffeological spaces, which would be crucial in developing Riemannian diffeology.

\subsection{On $I(M)$ and $(M, \langle \mathcal{G} \rangle)$ in Remark \ref{rem:InfiniteMfd_definiteness}}\label{sect:I(M)}
We use the same notation as in Section \ref{subsection:IDMfd}. 
Suppose that $M$ is of finite dimension. 
Then, each plot of $M$ locally factors through the inverse $\varphi^{-1}$ for some chart $(\varphi, U)$ of $M$.  
Moreover, the inverse is in $\langle \mathcal{G} \rangle$. 
Therefore, we have $\D_{\text{std}}^M \subset \langle \mathcal{G} \rangle$ and then $\D_{\text{std}}^M = \langle \mathcal{G} \rangle$. 
However,  $\varphi^{-1}$ is not in $\langle \mathcal{G} \rangle$ if $M$ is of infinite dimension.

\begin{enumerate}[label={P1.}]
\item
	When is  $\D_{\text{std}}^M = \langle \mathcal{G} \rangle$ for an infinite dimensional manifold $M$?
\end{enumerate}

\subsection{$\D'$ versus $\D_{\text{func}}$}
Let $X$ and $N$ be diffeological spaces and $\mathcal{G}$ a generating family of the diffeology of $N$. 
In Section \ref{sect:D'}, we introduce a subdiffeology $\D'$ of the functional diffeology $\D_\text{func}$ of $C^\infty(X, N)$. 
The diffeology $\D'$ defined by $\mathcal{G}$ with the property (E) in Section \ref{sect:D'} plays a crucial role 
when considering the definiteness of a weak Riemannian metric on $C^\infty(X, Y)$.  
Suppose that $N$ is a Riemannian manifold.
\begin{enumerate}[label={P2.}]
\item
	Is the bijection $l\colon C^\infty (S^1\vee S^1, N)\to LN\times_N LN$ in Lemma \ref{lem:l} a 
	diffeomorphism with respect to the subdiffeology?
\end{enumerate}

\subsection{A Riemannian orbifold}
Let $X$ be an effective orbifold \cite{Wolak} with $n$-dimensional orbifold atlas $\mathcal{U}=\{(\widetilde{U}_i,\Gamma_i,\phi_i)\}$
(called a \emph{defining family} in \cite[Definition 34]{IZKZ}; each $(\widetilde{U}_i,\Gamma_i,\phi_i)$ is called a \emph{local uniformizing  system}).
Suppose $X$ is equipped with a Riemannian metric $g$ \cite[A.1]{Wolak}, 
namely a family $\{g_i\}$ of $\Gamma_i$-invariant Riemannian metrics $g_i$ on $\widetilde{U}_i$ in the usual sense, with obvious compatibility conditions.

Let $X$ be equipped with a diffeology generated by the quotient maps $\phi_i\colon\widetilde{U}_i\to U_i$, as in \cite[Proposiotion 38]{IZKZ}.
Then $X$ is a diffeological orbifold in the sense of \cite[Definition 6]{IZKZ}.

We show that the index set $\D_{\text{res}}$ consisting of the local uniformizing system of $X$ and the Riemannian metric $g$ of $X$ gives rise to a smooth map
\[
 g\colon T_2(X,\D_{\text{res}})\coloneqq\colim_{\D_{\text{res}}}\widehat{T}_2(\widetilde{U}_i)\to\R.
\]
Let $\phi_i\colon\widetilde{U}_i\to X$ ($i=1,2$) be local uniformizing systems such that $U_1\cap U_2\ne\emptyset$.
Then we can find another local uniformizing system $\phi_3\colon\widetilde{U}_3\to U_3$ with $U_3\subset U_1\cap U_2$, 
and embeddings $\psi_{3i}\colon\widetilde{U}_3\hookrightarrow\widetilde{U}_i$ ($i=1,2$) that are equivariant,
in the sense that there exist homomorphisms $\alpha_{3i}\colon\Gamma_3\to\Gamma_i$ with
$\psi_{3i}(h\cdot s)=\alpha_{3i}(h)\cdot\psi_{3i}(s)$ for $s\in\widetilde{U}_3$ and $h\in\Gamma_i$; 
see \cite[p.~201]{Wolak}.
\begin{equation}\label{eq:LUS_compatibility}
\begin{split}
\xymatrix{
 \widetilde{U}_1\ar[rd]_-{\phi_1} & \widetilde{U}_3\ar@{_(->}[l]_-{\psi_{31}}\ar@{^(->}[r]^-{\psi_{32}}\ar[d]^-{\phi_3} & \widetilde{U}_2\ar[ld]^-{\phi_2} \\
  & X &
 }
\end{split}
\end{equation}
Since $g\colon\widehat{T}_2(\widetilde{U}_i)\to\R$ is $\Gamma_i$-invariant and $\psi_{ji}$ is equivariant for each pair $(i,j)$,
the Riemannian metric $g$ of $X$ gives rise to a smooth map $g\colon T_2(X,\D_{\text{res}})\to\R$.

\begin{enumerate}[label={P3.}]
\item
	When is there a lift $\widehat{g}$ of $g$?
	\[
	\xymatrix@C30pt@R15pt{
	T_2(X, \D_{\text{res}}) \ar[r]^-g  \ar[d] & \R.  \\
	T_2(X) \ar@{.>}[ru]_{\widehat{g}}&
	}
	\]
\end{enumerate}

Now we discuss a condition for $g$ to be extended to a smooth map $\widehat{g}\colon T_2(X)\to\R$.

Let $P\colon U_P\to X$ be a plot.
For $r\in U_P$, suppose we have two local factorizations
$P|_{W_i}=\phi_i\circ f_i$ ($i=1,2$),
where $W_i$ is an open neighborhood of $r$ in $U_P$ and $f_i\colon W_i\to\widetilde{U}_i$ is a smooth map.
We may suppose $W_1=W_2$ ($\eqqcolon W$).
We can find $\phi_3\colon\widetilde{U}_3\to U_3$ and $\psi_{3i}\colon\widetilde{U}_3\hookrightarrow\widetilde{U}_i$ ($i=1,2$) as in \eqref{eq:LUS_compatibility}. 
Since $\psi_{3i}$ is an embedding of domains of the same dimensions, we can find $f_{3i}\colon W\to\widetilde{U}_3$ that satisfies $f_i=\psi_{3i}\circ f_{3i}$, after replacing $W$ with a smaller one.
\[
 \xymatrix{
 & W\ar@/_5pt/[ld]_-{f_1}\ar@/^5pt/[rd]^-{f_2}\ar@/_5pt/[d]_-{f_{31}}\ar@/^5pt/[d]^-{f_{32}} & \\
 \widetilde{U}_1\ar[rd]_-{\phi_1} & \widetilde{U}_3\ar@{_(->}[l]_-{\psi_{31}}\ar@{^(->}[r]^-{\psi_{32}}\ar[d]^-{\phi_3} & \widetilde{U}_2\ar[ld]^-{\phi_2} \\
  & X &
 }
\]
Since we have $\phi_3\circ f_{31}=\phi_3\circ f_{32}$ by the commutativity,
for any $x\in W$,
there exists $\gamma_x\in\Gamma_3$ satisfying $f_{31}(x)=\gamma_x\cdot f_{32}(x)$.
If we can choose $\gamma\colon W\to\Gamma_3$ to be smooth, 
then the map $g$ may extend to $T_2(X)$.

\subsubsection*{Acknowledgement}
The authors thank David Miyamoto for valuable comments on the first version of this article and for showing them the generating family in Example \ref{ex:(E)}. 
The authors are grateful to Yoshihisa Miyanishi for discussions with the first authors without which they could not have obtained the refined proof of Proposition \ref{prop:distance_on_Func}. 
The authors would also like to thank the referee for careful reading and valuable comments on a version of this manuscript which led their attention to infinite dimensional manifolds, orbifolds and the category of Riemannian diffeological spaces.

\end{document}